\def\inmain{1}
\def\usehyperref{1}
\newcommand\texorpdfstring[2]{#1}
\newcommand\nolinkurl[1]{\url{#1}}
\newcommand{\Acal}{{\mathcal A}}
\newcommand{\Bcal}{{\mathcal B}}
\newcommand{\Ccal}{{\mathcal C}}
\newcommand{\Dcal}{{\mathcal D}}
\newcommand{\Ecal}{{\mathcal E}}
\newcommand{\Fcal}{{\mathcal F}}
\newcommand{\Gcal}{{\mathcal G}}
\newcommand{\Ical}{{\mathcal I}}
\newcommand{\Jcal}{{\mathcal J}}
\newcommand{\Mcal}{{\mathcal M}}
\newcommand{\Ncal}{{\mathcal N}}
\newcommand{\Ocal}{{\mathcal O}}
\newcommand{\Pcal}{{\mathcal P}}
\newcommand{\acal}{\Acal}
\newcommand{\ccal}{\Ccal}
\newcommand{\dcal}{\Dcal}
\newcommand{\ecal}{\Ecal}
\newcommand{\mcal}{\Mcal}
\newcommand{\ZZ}{{\mathbb Z}}
\g@addto@macro\bfseries{\boldmath}
\newcommand{\kr}{\kern -2pt}
\DeclareMathOperator{\id}{id}
\DeclareMathOperator{\Hom}{Hom}
\DeclareMathOperator{\Fun}{Fun}
\DeclareMathOperator{\Arr}{Arr}
\DeclareMathOperator{\der}{D}
\newcommand{\enh}{{\text{\normalfont enh}}}
\newcommand{\op}{{\text{\normalfont op}}}
\DeclareMathOperator{\Spc}{Spc}
\DeclareMathOperator{\Cat}{Cat}
\newcommand{\cathat}{\smash{{\vstretch{.8}{\widehat{\vstretch{1.25}{\Cat}}}}}}
\DeclareMathOperator{\colim}{colim}
\DeclareMathOperator{\Tot}{Tot}
\newcommand{\lex}{{\text{\normalfont lex}}}
\DeclareMathOperator{\Ind}{Ind}
\let\Pr\relax
\DeclareMathOperator{\Pr}{Pr}
\newcommand{\radj}{{\text{\normalfont radj}}}
\DeclareMathOperator{\Sp}{Sp}
\DeclareMathOperator{\Ab}{Ab}
\newcommand{\cn}{{\normalfont\text{cn}}}
\DeclareMathOperator{\CAlg}{CAlg}
\DeclareMathOperator{\Mod}{Mod}
\DeclareMathOperator{\LMod}{LMod}
\let \Bar \relax
\DeclareMathOperator{\Bar}{Bar}
\let\ss\relax
\newcommand{\ss}{{\normalfont \text{ss}}}
\DeclareMathOperator{\Groth}{Groth}
\newcommand{\comp}{{\normalfont\text{comp}}}
\newcommand{\otimeshat}{\mathbin{\smash{\widehat{\otimes}}}}
\newcommand{\Barhat}{\smash{{\vstretch{.8}{\widehat{\vstretch{1.25}{\Bar}}}}}}
\let \sc \relax
\newcommand{\sc}{{\normalfont \text{sc}}}
\newcommand{\cnorm}{{\normalfont\text{c}}}
\newcommand{\acnorm}{{\normalfont\text{ac}}}
\DeclareMathOperator{\Core}{Core}
\DeclareMathOperator{\Aff}{Aff}
\DeclareMathOperator{\QAff}{QAff}
\DeclareMathOperator{\Spec}{Spec}
\DeclareMathOperator{\Stk}{Stk}
\DeclareMathOperator{\PreStk}{PreStk}
\DeclareMathOperator{\QCoh}{QCoh}
\newcommand{\twoQCoh}{{2\kr\QCoh}}
\DeclareMathOperator{\Rep}{Rep}
\newcommand{\ab}{{\normalfont \text{Ab}}}
\newcommand{\pst}{{\normalfont\text{PSt}}}
\newcommand{\St}{{\text{\normalfont St}}}
\newcommand{\st}{{\text{\normalfont st}}}
\newtheorem{proposition}[subsubsection]{Proposition}
\newtheorem{lemma}[subsubsection]{Lemma}
\newtheorem{theorem}[subsubsection]{Theorem}
\newtheorem{corollary}[subsubsection]{Corollary}
\newtheoremstyle{note}{8.0pt plus 2.0pt minus 4.0pt}{8.0pt plus 2.0pt minus 4.0pt}{}{}{\bfseries}{.}{.5em}{} 
\theoremstyle{note}
\newtheorem{example}[subsubsection]{Example}
\newtheorem{remark}[subsubsection]{Remark}
\newtheorem{notation}[subsubsection]{Notation}
\newtheorem{construction}[subsubsection]{Construction}
\newtheorem{definition}[subsubsection]{Definition}
\title{Tannaka duality and $1$-affineness}
\author{G. Stefanich}
\date{}
\begin{document}


\begin{abstract}
We show that Lurie's results on Tannaka duality for geometric stacks hold without any tameness hypotheses. We deduce this as a consequence of an affineness theorem in the theory of sheaves of categories. This affineness result is also applied to the study of tensor product and integral transform formulas for categories of quasicoherent sheaves.
\end{abstract}
 
\maketitle

\tableofcontents

\newpage

 
\section{Introduction}

Let $G$ be an affine group scheme over a commutative ring $k$. The Tannakian formalism is a web of ideas that relate $G$ and its symmetric monoidal category $\Rep(G)^\heartsuit$ of representations. In its most basic form, it states that the group of points $G(k)$ is isomorphic to the group of symmetric monoidal automorphisms of the forgetful functor from $\Rep(G)^\heartsuit$ to the category of $k$-modules. 

In geometric terms, the symmetric monoidal category $\Rep(G)^\heartsuit$ may be identified with the category $\QCoh(BG)^\heartsuit$ of quasicoherent sheaves on the classifying stack of $G$. From this point of view, the Tannakian formalism attempts to recover $BG$ from $\QCoh(BG)^\heartsuit$. The stack $BG$ is an example of a geometric stack, by which we will mean an algebraic stack modelled fpqc-locally on affine schemes. A natural question is then to what extent a geometric stack $X$ may be recovered from the symmetric monoidal category $\QCoh(X)^\heartsuit$. This is addressed by the following fundamental result of Lurie:

\begin{theorem}[\cite{LurieTannaka, SAG}] \label{theorem tannaka clasico lurie}
Let $X, Y$ be a pair of geometric stacks, and assume that $X$ is quasi-compact and has affine diagonal. Then the assignment $f \mapsto f^*$ provides an equivalence between the groupoid of maps $Y \rightarrow X$ and the groupoid of colimit preserving symmetric monoidal functors $F: \QCoh(X)^\heartsuit \rightarrow \QCoh(Y)^\heartsuit$ satisfying the following two conditions:
\begin{enumerate}[\normalfont (i)]
\item $F$ sends flat sheaves to flat sheaves.
\item Let $0 \rightarrow \Fcal' \rightarrow \Fcal \rightarrow \Fcal'' \rightarrow 0$ be an exact sequence in $\QCoh(X)^\heartsuit$ such that $\Fcal''$ is flat. Then the sequence $0 \rightarrow F(\Fcal') \rightarrow F(\Fcal) \rightarrow F(\Fcal'') \rightarrow 0$ is exact.
\end{enumerate}
\end{theorem}

Theorem \ref{theorem tannaka clasico lurie} was extended in \cite{DAGVIII} to the context of spectral geometric stacks, that is, stacks modelled fpqc-locally on spectra of connective $E_\infty$-rings. In this setting, the symmetric monoidal  category $\QCoh(X)^\heartsuit$ is replaced by the symmetric monoidal $\infty$-category $\QCoh(X)^\cn$ of connective quasicoherent sheaves on $X$. Even for classical geometric stacks, replacing $\QCoh(X)^\heartsuit$ with $\QCoh(X)^\cn$ removes the need for condition (ii) in theorem \ref{theorem tannaka clasico lurie}:

\begin{theorem}[\cite{DAGVIII}]\label{theorem tannaka spectral lurie}
Let $X, Y$ be a pair of spectral geometric stacks, and assume that $X$ is quasi-compact and has affine diagonal. Then the assignment $f \mapsto f^*$ provides an equivalence between the space of maps $Y \rightarrow X$ and the space of colimit preserving symmetric monoidal functors $F: \QCoh(X)^\cn \rightarrow \QCoh(Y)^\cn$ satisfying the following condition:
\begin{enumerate}[\normalfont (i)]
\item $F$ sends flat sheaves to flat sheaves. 
\end{enumerate}
\end{theorem}

Conditions (i) and (ii) in the above theorems are called tameness conditions. There has been interest over the years in removing these conditions from the statements. With regard to theorem \ref{theorem tannaka clasico lurie}, tameness may be removed in the case of schemes by work of Brandenburg and Chirvasitu \cite{BrandenburgTensorial, BCTensor}, and in the case of geometric stacks with the resolution property by work of Sch\"{a}ppi \cite{SchappiCharacterization}.  In the setting of Noetherian Artin stacks, Hall and Rydh \cite{HallRydh} showed that tameness may be replaced with the condition that $F$ preserves coherent sheaves.

Some progress has also been made in removing tameness from theorem \ref{theorem tannaka spectral lurie}. This has been accomplished in the case of algebraic spaces by Bhatt \cite{BhattAlgebraization}, and in the case of spectral geometric stacks with the resolution property in work of Lurie \cite{SAG}. In the context of Noetherian spectral geometric stacks with quasi-affine diagonal, Bhatt and Halpern-Leistner \cite{BhattHalpernLeistner} show that tameness may be replaced with the condition that $F$ preserves pseudo-coherent sheaves, a condition that may be also be removed if the diagonal is affine \cite{SAG}.

The goal of this paper is to show that theorems \ref{theorem tannaka clasico lurie} and \ref{theorem tannaka spectral lurie} hold in all generality  without any tameness hypotheses. Moreover, we show that the spectral version continues to hold when the diagonal is only assumed to be quasi-affine:

\begin{theorem}\label{theorem intro tannaka}
Let $X, Y$ be a pair of spectral geometric stacks, and assume that $X$ is quasi-compact and has quasi-affine diagonal. 
\begin{enumerate}[\normalfont (1)]
\item The assignment $f \mapsto f^*$ provides an equivalence between the space of maps $Y \rightarrow X$ and the space of colimit preserving symmetric monoidal functors $F: \QCoh(X)^\cn \rightarrow \QCoh(Y)^\cn$. 
\item If $X$ and $Y$ are classical and $X$ has affine diagonal then the above are also equivalent to the  space of colimit preserving symmetric monoidal functors $F: \QCoh(X)^\heartsuit \rightarrow \QCoh(Y)^\heartsuit$.
\end{enumerate}
\end{theorem}

Our proof of theorem \ref{theorem intro tannaka} will make use of the theory of sheaves of categories. For simplicity, let us first focus on the classical setting. Given a geometric stack $X$, there is a notion of quasicoherent sheaf of Grothendieck abelian categories on $X$: roughly speaking, this consists of a compatible assignment of an $R$-linear Grothendieck abelian category to each affine chart $\Spec(R) \rightarrow X$. The totality of such sheaves assemble into a symmetric monoidal $2$-category denoted $\twoQCoh^{\Ab}(X)$, which we think of as a categorification of the symmetric monoidal category $\QCoh(X)^\heartsuit$.

Just like ordinary quasicoherent sheaves, the categorical version admits pullback and pushforward functoriality, and in particular there is a global sections functor $\Gamma(X, -)$ from $\twoQCoh^{\Ab}(X)$ into the $2$-category $\Groth_1$ of Grothendieck abelian categories. For each object $\ccal$ in $\twoQCoh^{\Ab}(X)$, the category of global sections $\Gamma(X, \ccal)$ has a canonical action of the symmetric monoidal category $\QCoh(X)^\heartsuit$, giving rise to a functor
\[
\Gamma^\enh(X,-): \twoQCoh^{\Ab}(X) \rightarrow \Mod_{\QCoh(X)^\heartsuit}(\Groth_1).
\]

From the point of view of categorical sheaf theory, the simplest stacks are those for which the above functor is an equivalence. This is a categorification of the condition that the category $\QCoh(X)^\heartsuit$ be equivalent to the category of modules over the ring of functions on $X$, which in the case when $X$ is a quasi-compact scheme holds if and only if $X$ is affine. By analogy, we say that a geometric stack $X$ is $\twoQCoh^{\Ab}$-affine if $\Gamma^\enh(X, -)$ is an equivalence.

In the context of quasicoherent sheaves of presentable stable $\infty$-categories, the question of affineness was studied by Gaitsgory \cite{G}, where a number of positive results were established. The basic philosophy is that many geometric objects become affine after categorification; this observation is supported by \cite{Thesis} where we showed that many Artin $n$-stacks are affine with respect to the theory of quasicoherent sheaves of  presentable stable $(\infty,n)$-categories.

The notion of quasicoherent sheaf of Grothendieck abelian categories was introduced in \cite{SAG}, where the functor $\Gamma^\enh(X,-)$ is shown to be fully faithful whenever $X$ is quasi-compact and has affine diagonal. Its image was characterized by a pair of tameness conditions, similar to those in theorem \ref{theorem tannaka clasico lurie}. Our next result removes these conditions:

\begin{theorem}\label{theorem affineness abelian intro}
Let $X$ be a quasi-compact geometric stack with affine diagonal. Then $\Gamma^\enh(X, -)$ is an equivalence.
\end{theorem}

To deduce part (1) of theorem \ref{theorem intro tannaka} one needs to replace Grothendieck abelian categories with complete Grothendieck prestable $\infty$-categories. This is a notion introduced by Lurie in \cite{SAG} which provides a convenient framework for working with presentable stable $\infty$-categories with complete t-structures. In the same way that for any geometric stack $X$ the category $\QCoh(X)^\heartsuit$ is Grothendieck abelian, for any spectral geometric stack $X$ the $\infty$-category $\QCoh(X)^\cn$ is complete Grothendieck prestable. 

To each spectral geometric stack $X$ one may attach a symmetric monoidal $\infty$-category $\twoQCoh^{\pst}_\comp(X)$ of quasicoherent sheaves of complete Grothendieck prestable $\infty$-categories on $X$, and there is a corresponding global sections functor
\[
\Gamma^\enh_\comp(X,-): \twoQCoh^{\pst}_\comp(X) \rightarrow \Mod_{\QCoh(X)^\cn}(\Groth_\comp).
\]
In \cite{SAG}, Lurie shows that $\Gamma^\enh_\comp(X, -)$ is fully faithful whenever $X$ is quasi-compact with affine diagonal, and provides a description of its image in terms of a tameness condition. Our next result generalizes this to the case of quasi-affine diagonals, and removes all tameness hypotheses:

\begin{theorem}\label{theorem affineness prestable intro}
Let $X$ be a quasi-compact spectral geometric stack with quasi-affine diagonal. Then $\Gamma^\enh_\comp(X, -)$ is an equivalence.
\end{theorem}

In addition to Tannaka duality, theorems \ref{theorem affineness abelian intro} and \ref{theorem affineness prestable intro} may be used to deduce tensor product formulas for categories of quasicoherent sheaves (a particular case of which is in fact also needed for their proof). The basic setup is the following: given maps of geometric stacks $X \rightarrow Y \leftarrow Z$, one wishes to give a description of the category of quasicoherent sheaves on $X \times_Y Z$ as a relative tensor product of the categories of quasicoherent sheaves on $X$ and $Z$. A prototypical result of this kind was established by Ben-Zvi, Francis and Nadler in \cite{BZFN}, where they show that at the level of quasicoherent sheaves of spectra one has an equivalence $\QCoh(X \times_Y Z) = \QCoh(X) \otimes_{\QCoh(Y)} \QCoh(Z)$ whenever $X, Y, Z$ belong to the class of so-called perfect stacks. In the classical setting, a similar formula was established by  Brandenburg \cite{Brandenburg} in the case of quasi-compact quasi-separated schemes, and by Sch\"{a}ppi \cite{SchappiInd, SchappiWhich} in the case of quasi-compact geometric stacks with affine diagonal having the resolution property. Our main theorem on this topic is the following:

\begin{theorem}\label{theorem tensor products introduction}
Let $X \rightarrow Y \leftarrow Z$ be maps of quasi-compact spectral geometric stacks with quasi-affine diagonal. 
\begin{enumerate}[\normalfont (1)]
\item Equip the presentable stable $\infty$-category $\QCoh(X) \otimes_{\QCoh(Y)} \QCoh(Z)$ with the t-structure whose connective half is generated under colimits and extensions by the objects of the form $\Fcal \otimes \Gcal$ with $\Fcal$ and $\Gcal$ connective. Then $\QCoh(X \times_Y Z)$ is equivalent to the left completion of $\QCoh(X) \otimes_{\QCoh(Y)} \QCoh(Z)$.
\item If the structure sheaves $\Ocal_X, \Ocal_Y$ are compact and truncated, then 
\[
\QCoh(X \times_Y Z) = \QCoh(X) \otimes_{\QCoh(Y)} \QCoh(Z).
\]
\item If $Y$ has affine diagonal then 
\[
\QCoh(X \times_Y Z)^\heartsuit = \QCoh(X)^\heartsuit \otimes_{\QCoh(Y)^\heartsuit} \QCoh(Z)^\heartsuit.
\]
\end{enumerate}
\end{theorem}

These tensor product formulas may be used to deduce integral transform formulas. In this case, one wishes to identify the category of quasicoherent sheaves on $X \times_Y Z$ with a category of functors between the categories of quasicoherent sheaves on $X$ and $Z$. Ben-Zvi, Francis and Nadler in \cite{BZFN} address the case of perfect stacks, in which they construct an equivalence between $\QCoh(X \times_Y Z)$  and the $\infty$-category $\Fun^L_{\QCoh(Y)}(\QCoh(X), \QCoh(Z))$ of $\QCoh(Y)$-linear colimit preserving functors from $\QCoh(X)$ to $\QCoh(Z)$. Theorem \ref{theorem tensor products introduction} allows us to deduce the following variant:

\begin{corollary}\label{corollary integral transforms introduction}
Let $X \rightarrow Y \leftarrow Z$ be maps of quasi-compact spectral geometric stacks with quasi-affine diagonal. Assume that $\Ocal_X$ and $\Ocal_Y$ are compact and truncated. Then there is an equivalence
\[
\QCoh(X \times_Y Z) = \Fun^L_{\QCoh(Y)}(\QCoh(X), \QCoh(Z)).
\]
\end{corollary}

Tensor product formulas are also relevant to the computation of higher traces. As discussed in \cite{BZFN}, if $X$ is a perfect stack and $n \geq 0$, one may identify the $E_n$-Hochschild homology of $\QCoh(X)$ with $\QCoh(X^{S^n})$. More generally, a similar formula computes the factorization homology of $\QCoh(X)$ over an arbitrary compact manifold. Theorem \ref{theorem tensor products introduction} may be used to obtain a variant of these computations that hold beyond the perfect setting:

\begin{corollary}\label{corollary factorization homology introduction}
Let $X$ be a quasi-compact spectral geometric stack with quasi-affine diagonal and let $M$ be a compact manifold. Then there is a canonical t-structure on the factorization homology $\int_M \QCoh(X)$ of the $E_\infty$ presentable stable $\infty$-category $\QCoh(X)$, whose left completion is equivalent to $\QCoh(X^M)$.
\end{corollary}


\addtocontents{toc}{\protect\setcounter{tocdepth}{2}}
\subsection{Conventions and notation}

Throughout the paper we use the convention where the word category stands for $\infty$-category, and use the term $(1,1)$-category or classical category if we wish to refer to the classical notion. Similarly, we will use the term geometric stack to refer to spectral stacks and say that a geometric stack is classical if it admits an atlas by spectra of $0$-truncated connective $E_\infty$-rings.

We denote by $\CAlg^{\cn}$ the category of connective ring spectra, and by $\Aff = (\CAlg^{\cn})^\op$ the category of affine schemes. We let $\PreStk$ be the category of accessible presheaves on $\Aff$, and we let $\Stk$ be the full subcategory of $\PreStk$ on the fpqc sheaves. Objects of $\PreStk$ and $\Stk$ are called prestacks and stacks, respectively.  The class of geometric stacks used in this paper is the closure of $\Aff$ inside $\Stk$ under small coproducts and geometric realizations of flat groupoids,  as in \cite{Simpson, TVgeometric}. Our main results are stated for quasi-compact geometric stacks with (quasi-)affine diagonal, in which case this notion agrees with the notion of (quasi-)geometric stack from \cite{SAG} sections 9.1 and 9.3.

Each category $\ccal$ has a Hom bifunctor which we denote by $\Hom_\ccal(-,-)$, whose target is the category $\Spc$ of homotopy types.  For each category $\ccal$ and each $n \geq -2$ we denote by $\ccal_{\leq n}$ the full subcategory of $\ccal$ on the $n$-truncated objects. If the inclusion $\ccal_{\leq n} \rightarrow \ccal$ admits a left adjoint, this will be denoted by $\tau_{\leq n}$.  In cases when $\ccal$ is a stable category equipped with a t-structure we will repurpose this notation by letting $\ccal_{\leq n}$ (resp.  $\ccal_{\geq n}$) be the full subcategory of $\ccal$ on the $n$-truncated (resp. $n$-connective) objects with respect to the t-structure, and $\tau_{\leq n}$ (resp. $\tau_{\geq n}$) be the corresponding truncation functor. We  denote by $\ccal^\cn = \ccal_{\geq 0}$ the full subcategory of $\ccal$ on the connective objects, and by $\ccal^\heartsuit$ the heart of the t-structure. 

We denote by $\Ab$ the category of abelian groups, and $\Sp$ the category of spectra. For each commutative ring spectrum $R$ we denote by $\Mod_R$ the category of $R$-module spectra. If $R$ is connective we will denote by $\Mod_R^\cn$ the full subcategory of $\Mod_R$ on the connective $R$-module spectra, and by $\Mod_R^\heartsuit$ the full subcategory of $\Mod_R^\cn$ on the $0$-truncated objects. This applies in particular to the case when $R$ is a (classical) commutative ring: in this case $\Mod_R^\heartsuit$ is the category of $R$-modules in abelian groups, while $\Mod_R$ is its derived category. Given a geometric stack $X$, we denote by $\QCoh(X)$ the category of quasicoherent sheaves of spectra on $X$, and by $\QCoh(X)^\cn$ (resp. $\QCoh(X)^\heartsuit$) the full subcategory on the connective (resp. connective and $0$-truncated) objects.

We fix a sequence of nested universes. Objects belonging to the first two universes are called small and large, respectively. We let $\Cat$ be the category of small categories and  $\cathat$ be the category of large categories. We denote by $\Pr^L$ the   subcategory of $\cathat$ on the presentable categories and colimit preserving functors, and by $\Pr^L_{\St}$ the full subcategory of $\Pr^L$ on the presentable stable categories.  We will frequently consider $\Pr^L$ and $\Pr^L_{\St}$ as symmetric monoidal categories as in \cite{HA} chapter 4.8, where for each pair of objects $\Ccal, \Dcal$ the tensor product $\Ccal \otimes \Dcal$ is the universal recipient of a  functor from $\Ccal \times \Dcal$ which preserves colimits in each coordinate. 

For each pair of categories $\ccal, \dcal$ we denote by $\Fun(\ccal, \dcal)$ the category of functors from $\ccal$ to $\dcal$. If $\ccal$ and $\dcal$ admit small colimits  we denote by $\Fun^L(\ccal, \dcal)$  the full subcategory of $\Fun(\ccal, \dcal)$ on those functors which preserve small colimits.

We use the term $2$-category to refer to $(\infty,2)$-categories. A commutative square
\[
\begin{tikzcd}
X' \arrow{d}{f'} \arrow{r}{g'} & X \arrow{d}{f} \\
Y' \arrow{r}{g} & Y
\end{tikzcd}
\]
in a $2$-category $\ccal$ is said to be vertically right adjointable if the arrows $f$ and $f'$ admit right adjoints $f^R, f'^R$, and the induced $2$-cell $g' \circ f'^R \rightarrow f^R \circ g$ is an isomorphism. One similarly defines the notions of vertical left adjointability  and horizontal left/right adjointability. This will frequently be applied in the particular case when $\ccal$ is the $2$-category of categories.

 
\subsection{Acknowledgments}

I am grateful to Jacob Lurie for an in-depth conversation on the results of this paper and related topics. I am also thankful to David Ben-Zvi for several discussions on Tannaka duality; to Bhargav Bhatt and Jacob Lurie for a conversation regarding weakenings of the quasi-affine diagonal assumption; and to David Nadler for suggesting the application to factorization homology.

\ifx\inmain\undefined
\bibliographystyle{myamsalpha}
\bibliography{References}
\fi


\section{Grothendieck prestable categories}\label{section prestable}

In \cite{SAG} appendix C, Lurie introduced the notion of Grothendieck prestable category, as an $\infty$-categorical version of the notion of Grothendieck abelian category. Roughly speaking, a Grothendieck prestable category is a category $\ccal$ which satisfies certain exactness properties similar to those present in the categories of connective modules over connective ring spectra. 

The theory of Grothendieck prestable categories provides a convenient language to study presentable stable categories equipped with t-structures: any Grothendieck prestable category $\Ccal$ is equivalent to the connective half of a t-structure on the category of spectrum objects $\Sp(\ccal)$, and furthermore every presentable stable category with a right complete t-structure compatible with filtered colimits is of this form. Throughout the paper we will often work with Grothendieck prestable categories satisfying a certain completeness condition which corresponds under this dictionary to the left completeness of t-structures. This is satisfied for instance whenever $\ccal =\QCoh(X)^\cn$ is the category of connective quasicoherent sheaves on a geometric stack $X$.

We begin this section in \ref{subsection groth} by reviewing various general features of the category $\Groth$ of Grothendieck prestable categories and colimit preserving functors, and of its localization $\Groth_\comp$ consisting of complete Grothendieck prestable categories. For the most part, this is a review of portions of \cite{SAG} appendix C; the new material consists of the notion of almost compactness, together with a result on the commutation of limits and colimits in $\Groth$.

The categories $\Groth$ and $\Groth_\comp$ admit canonical symmetric monoidal structures. To accomplish the goals of this paper we will need a robust theory of relative tensor products inside these. This presents some difficulty since the class of colimits at our disposal is limited. Indeed, we will only allow ourselves to form relative tensor products over algebras satisfying certain conditions that guarantee good behavior of Bar constructions. These conditions are $2$-categorical in nature, and are based in the notion of almost rigid algebra in a  monoidal $2$-category which we explore in \ref{subsection almost rigid}.

Although $\Groth_\comp$ has the structure of a symmetric monoidal $2$-category, and thus it makes sense to consider almost rigid algebras inside it, we will in fact need to work with a slight variant of $\Groth_\comp$ denoted by $\Groth^{\st}_\comp$, which we define in \ref{subsection grothstcomp}. In terms of t-structures, morphisms in $\Groth_\comp$ correspond to right t-exact functors, while  $\Groth^{\st}_\comp$ allows for functors which are only right t-exact up to a shift. This added generality will be necessary to establish our main result, since the functor of pushforward along the diagonal map of a geometric stack with quasi-affine diagonal is only right t-exact up to a shift.

Finally, in  \ref{subsection completed relative} we introduce the notion of admissible commutative algebra in $\Groth_\comp$, by which we will mean a commutative algebra which is almost rigid when regarded as an algebra in $\Groth^{\st}_\comp$. We discuss here various pleasant properties of admissible commutative algebras which follow from the general properties of almost rigid algebras established in \ref{subsection almost rigid}. In particular, we show that admissible commutative algebras support a good theory of completed relative tensor products.


\subsection{The category of Grothendieck prestable categories}\label{subsection groth}

We begin by recalling the notion of Grothendieck prestable category introduced in \cite{SAG} appendix C.

\begin{definition}\label{def prestable}
A Grothendieck prestable category is a presentable category $\Ccal$ satisfying the following properties:
\begin{enumerate}[\normalfont (a)]
\item The initial and final objects of $\Ccal$ agree (that is, $\Ccal$ is pointed).
\item Every cofiber sequence in $\Ccal$ is also a fiber sequence.
\item Every map in $\Ccal$ of the form $f: X \rightarrow \Sigma(Y)$ is the cofiber of its fiber.
\item Filtered colimits and finite limits commute in $\Ccal$.
\end{enumerate}

We denote by $\Groth$ the full subcategory of $\Pr^L$ on the Grothendieck prestable categories.
\end{definition}

For each Grothendieck prestable category $\Ccal$ the canonical functor from $\ccal$ into the category $\ccal \otimes \Sp = \Sp(\ccal)$  of spectrum objects in $\ccal$ is fully faithful, and identifies $\Ccal$ with the connective half of a t-structure on $\Sp(\Ccal)$. This t-structure is right complete (in other words, $\Sp(\ccal) = \lim_{n \to -\infty} \Sp(\ccal)_{\geq n}$) and compatible with filtered colimits (in other words, the truncation functors preserve filtered colimits). It turns out that the assignment $\Ccal \mapsto \Sp(\Ccal)$ provides an equivalence between $\Groth$ and the category whose objects are presentable stable categories equipped with a  right complete t-structure compatible with filtered colimits and whose morphisms are colimit preserving right t-exact functors (see \cite{SAG} corollary C.3.1.4 and remark C.3.1.5).
 
We now discuss a few properties of morphisms in $\Groth$ that will be of fundamental importance for us. The first one is left exactness, which admits several equivalent characterizations:
 
\begin{proposition}[\cite{SAG} proposition C.3.2.1]\label{proposition characteriz left exact}
Let $f: \ccal \rightarrow \dcal$ be a colimit preserving functor between Grothendieck prestable categories. Then the following are equivalent:
\begin{enumerate}[\normalfont (1)]
\item $f$ is left exact.
\item $f$ maps $0$-truncated objects of $\ccal$ to $0$-truncated objects of $\dcal$.
\item The functor $\Sp(f) : \Sp(\ccal) \rightarrow \Sp(\dcal)$ is left t-exact.
\end{enumerate}
\end{proposition}

The second property of morphisms of $\Groth$ that we will need is compactness, which also admits multiple descriptions:

\begin{proposition}[\cite{SAG} proposition C.3.4.1]\label{proposition equivalence compactness}
Let $f: \ccal \rightarrow \dcal$ be a colimit preserving functor between Grothendieck prestable categories. Then the following are equivalent:
\begin{enumerate}[\normalfont (1)]
\item $f$ is compact: in other words, the right adjoint to $f$ preserves small filtered colimits.
\item The right adjoint to $\Sp(f) : \Sp(\ccal) \rightarrow \Sp(\dcal)$ preserves small colimits.
\end{enumerate}
\end{proposition}

The last property that we will need is a weakening of compactness:

\begin{definition}
Let $f: \ccal \rightarrow \dcal$ be a colimit preserving functor between Grothendieck prestable categories. We say that $f$ is almost compact if the right adjoint to $f$ preserves small filtered colimits when restricted to $\dcal_{\leq n}$ for all $n \geq 0$.
\end{definition}

\begin{proposition}
Let $f: \ccal \rightarrow \dcal$ be a colimit preserving functor between Grothendieck prestable categories. Then the following are equivalent:
\begin{enumerate}[\normalfont (1)]
\item $f$ is almost compact.
\item The right adjoint to $\Sp(f)$ preserves filtered colimits when restricted to $\Sp(\dcal)_{\leq 0}$.
\item The right adjoint to $\Sp(f)$ preserves filtered colimits when restricted to $\Sp(\dcal)_{\leq n}$ for all integers $n$.
\end{enumerate}
\end{proposition}
\begin{proof}
Let $g$ be the right adjoint to $f$, and $G$ be the right adjoint to $\Sp(f)$. We first prove that (1) implies (2).  Since the t-structure on $\Sp(\ccal)$ is right complete, to prove (2) it will suffice to show that $\tau_{\geq -n} \circ G$ preserves filtered colimits when restricted to $\Sp(\dcal)_{\leq 0}$, for all $n \geq 0$. This is equivalent to $\tau_{\geq 0} \circ G$ preserving filtered colimits when restricted to $\Sp(\dcal)_{\leq n}$. Using the left t-exactness of $G$ we may identify $\tau_{\geq 0} \circ G$ with $g \circ \tau_{\geq 0}$. The fact that (2) holds now follows from (1) together with the fact that truncations on $\Sp(\dcal)$ preserve filtered colimits.

Assertion (2) is equivalent to (3), since $G$ is an exact functor. We finish the proof by showing that (3) implies (1). Since $g$ may be identified with $\tau_{\geq 0} \circ G|_{\dcal}$, to show that $f$ is almost compact it will suffice to show that $\tau_{\geq 0} \circ G$ preserves filtered colimits when restricted to $\dcal_{\leq n}$ for all $n \geq 0$. This follows from (3), since truncations on $\Sp(\ccal)$ preserve filtered colimits.
\end{proof}

The category $\Groth$ does not have all small limits nor colimits, see appendix \ref{appendix limits and colimits}. Nevertheless, it does have small limits (resp. colimits) of diagrams with left exact (resp. almost compact) transition functors.

\begin{notation}
We denote by $\Groth^\lex$ (resp. $\Groth^{\text{c}}$, resp. $\Groth^{\text{ac}}$) the wide subcategory of $\Groth$ on the left exact (resp. compact, resp. almost compact) functors.
\end{notation}

\begin{proposition} \label{prop colimits y limits} \hfill
\begin{enumerate}[\normalfont (1)]
\item The category $\Groth^\lex$ admits small limits, and these are preserved by the inclusions $\Groth^\lex \rightarrow \Groth \rightarrow \cathat$. 
\item The categories $\Groth^\cnorm$ and $\Groth^\acnorm$ admit small colimits, and these are preserved by the inclusions into $\Groth$.
\end{enumerate}
\end{proposition}
\begin{proof}
Item (1) and the compactness half of item (2) are \cite{SAG} propositions  C.3.2.4,  C.3.5.1 and C.3.5.3. The almost compactness half of item (2) is proven in an analogous way to the compactness half.
\end{proof}

\begin{remark}\label{remark limit along lex}
Let $(\ccal_\alpha)$ be a diagram in $\Groth^\lex$, and let $\ccal$ be its limit in $\Groth$. Then $\Sp(\ccal) = \lim \Sp(\Ccal_\alpha)$, and an object in $\Sp(\ccal)$ is connective (resp. coconnective) if and only if its image in $\Sp(\ccal_\alpha)$ is connective (resp. coconnective) for all $\alpha$. 
\end{remark}

\begin{remark}\label{remark colimit along compact}
Let $(\ccal_\alpha)$ be a diagram in $\Groth^\acnorm$, and let $\ccal$ be its colimit in $\Groth$. Then it follows from the proof of (the almost compact version of) \cite{SAG} proposition C.3.5.1 that $\Sp(\ccal)$ is the colimit in $\Pr^L$ of the diagram $(\Sp(\ccal_\alpha))$, so that $\Sp(\ccal)$ may be identified with the limit in $\cathat$ of the diagram induced from this by passage to right adjoints. Furthermore, an object in  $\Sp(\ccal)$ is coconnective if and only if its image in $\Sp(\ccal_\alpha)$  is coconnective for all $\alpha$.
\end{remark}

\begin{remark}
There is yet another property of morphisms in $\Groth$ which is sometimes relevant. We say that a morphism $f: \ccal \rightarrow \dcal$ is strongly compact if it admits a colimit preserving right adjoint; in terms of t-structures, this corresponds to the condition that $\Sp(f)$ admits a t-exact colimit preserving right adjoint. It turns out that colimits of diagrams with strongly compact transitions are preserved by the inclusion $\Groth \rightarrow \Pr^L$ (\cite{SAG} remark C.3.5.4). Working with compactness and almost compactness as opposed to strong compactness will be necessary  to establish our main results in the case of quasi-affine (as opposed to affine) diagonals.
\end{remark}

For later purposes we record the following commutation property of limits and colimits in $\Groth$:
\begin{proposition}\label{proposition commute limits and colimits}
Let $\Ical, \Jcal$ be small categories and let $F: \Ical \times \Jcal \rightarrow \Groth$ be a diagram. Assume the following:
\begin{enumerate}[\normalfont (a)]
\item For every $\alpha \rightarrow \alpha'$ in $\Ical$ and every $\beta$ in $\Jcal$ the map $F(\alpha, \beta) \rightarrow F(\alpha', \beta)$ is left exact.
\item For every $\alpha$ in $\Ical$ and every $\beta \rightarrow \beta'$ in $\Jcal$ the map $F(\alpha, \beta) \rightarrow F(\alpha, \beta')$ is almost compact.
\item For every $\alpha \rightarrow \alpha'$ in $\Ical$ and every $\beta \rightarrow \beta'$ in $\Jcal$ the commutative square
\[
\begin{tikzcd}
F(\alpha, \beta) \arrow{r}{} \arrow{d}{} & F(\alpha, \beta') \arrow{d}{} \\
F(\alpha', \beta) \arrow{r}{} & F(\alpha', \beta')
\end{tikzcd}
\]
is horizontally right adjointable.
\end{enumerate}
 For each $\alpha$ in $\Ical$ let $F(\alpha, \ast) =\colim_{\Jcal} F(\alpha, \beta)$, and for each $\beta$ in $\Jcal$ let $F(\ast, \beta) = \lim_{\Ical} F(\alpha, \beta)$. Then:
\begin{enumerate}[\normalfont (1)]
\item For each $\alpha \rightarrow \alpha'$ in $\Ical$ the map $F(\alpha, \ast) \rightarrow F(\alpha', \ast)$ is left exact.
\item For each $\beta \rightarrow \beta'$ in $\Jcal$ the map $F(\ast, \beta) \rightarrow F(\ast, \beta')$ is almost compact.  Furthermore, this map is compact if in assumption {\normalfont (b)} we replace almost compactness with compactness.
\item The canonical functor $\colim_{\Jcal} F(\ast, \beta) \rightarrow \lim_{\Ical} F(\alpha, \ast)$ is an equivalence.
\end{enumerate}
\end{proposition}

The proof of proposition \ref{proposition commute limits and colimits} requires the following:

\begin{lemma}\label{lemma vertical right adj and sp}
Let \[
\sigma: \begin{tikzcd}
\ccal' \arrow{d}{f'} \arrow{r}{g'} & \ccal \arrow{d}{f} \\
\dcal' \arrow{r}{g} & \dcal 
\end{tikzcd}
\]
be a commutative square of Grothendieck prestable categories and colimit preserving functors. Assume that $f'$ is left exact. Then $\sigma$ is horizontally right adjointable if and only if the square
\[
\Sp(\sigma): 
\begin{tikzcd}
\Sp(\ccal') \arrow{d}{\Sp(f')} \arrow{r}{\Sp(g')} & \Sp(\ccal )\arrow{d}{\Sp(f)} \\
\Sp(\dcal') \arrow{r}{\Sp(g)} & \Sp(\dcal) 
\end{tikzcd}
\]
is horizontally right adjointable.
\end{lemma}
\begin{proof}
Let $h$ and $h'$ be the right adjoints to $\Sp(g)$ and $\Sp(g')$, and note that the right adjoints to $g$ and $g'$ are given by (the restrictions to connective objects of) $\tau_{\geq 0} \circ h$ and $\tau_{\geq 0} \circ h'$, respectively. If $\Sp(\sigma)$ is horizontally right adjointable we have 
\[
\Sp(f') \circ \tau_{\geq 0} \circ h' = \tau_{\geq 0 } \circ \Sp(f') \circ h' = \tau_{\geq 0} \circ h \circ \Sp(f)
\]
which implies that $\sigma$ is horizontally right adjointable. Conversely, assume that $\sigma$ is horizontally right adjointable. Then for every integer $n$ we have
\[
\tau_{\geq n} \circ \Sp(f') \circ h' =  \Sp(f') \circ \tau_{\geq n} \circ h' = \tau_{\geq n} \circ h \circ \Sp(f).
\]
The fact that $\Sp(\sigma)$ is horizontally right adjointable follows from the fact that the t-structure on $\Sp(\Dcal')$ is right complete.
\end{proof}

\begin{proof}[Proof of proposition \ref{proposition commute limits and colimits}]
We begin with a proof of (1). By proposition \ref{proposition characteriz left exact} it suffices to show that the map $\Sp(F(\alpha, \ast)) \rightarrow \Sp(F(\alpha', \ast))$ is left t-exact. Using remark \ref{remark colimit along compact} we may (after passing to right adjoints in the second variable) identify $\Sp(F(\alpha, \ast)) = \lim \Sp(F(\alpha, \beta))$  and $\Sp(F(\alpha', \ast)) =\lim \Sp(F(\alpha', \beta)) $. Furthermore, these remain limits after restricting to coconnective objects.  Combining (c) with lemma \ref{lemma vertical right adj and sp} we see that for every map $\beta \rightarrow \beta'$ in $\Jcal$ the square
\[
\begin{tikzcd}
\Sp(F(\alpha, \beta)) \arrow{r}{} \arrow{d}{} & \Sp(F(\alpha, \beta')) \arrow{d}{} \\
\Sp(F(\alpha', \beta)) \arrow{r}{} & \Sp(F(\alpha', \beta'))
\end{tikzcd}
\] 
is horizontally right adjointable. It  follows  that  the map $\Sp(F(\alpha, \ast)) \rightarrow \Sp(F(\alpha', \ast))$ is the limit of the maps $\Sp(F(\alpha, \beta)) \rightarrow \Sp(F(\alpha', \beta))$. Item (1) is now a consequence of the fact that each of these functors is left t-exact.

Item (2) follows from the fact that the right adjoint to the map $F(\ast, \beta) \rightarrow F(\ast, \beta')$ is the limit of right adjoints to  the maps $F(\alpha, \beta) \rightarrow F(\alpha, \beta')$ (by (c)). It remains to prove (3).  Let $F(\ast, \ast) = \colim_{\Jcal} F(\ast, \beta)$. We wish to show that $F(\ast, \ast) = \lim_{\Ical} F(\alpha, \ast)$. As before, we may identify $\Sp(F(\ast,\ast))$ with the limit of the categories $\Sp(F(\ast, \beta))$. By remark   \ref{remark limit along lex} we may reduce to showing that the map $\Sp(F(\ast,\ast)) \rightarrow \lim \Sp(F(\alpha,\ast))$ is a t-exact equivalence. 

It follows from (c) that for every $\alpha$ in $\Ical$ and every map $\beta \rightarrow \beta'$ in $\Jcal$ the square
\[
\begin{tikzcd}
F(\ast, \beta) \arrow{r}{} \arrow{d}{} & F(\ast, \beta') \arrow{d}{} \\
F(\alpha , \beta) \arrow{r}{} & F(\alpha , \beta')
\end{tikzcd}
\]
is horizontally right adjointable. This implies, by virtue of lemma \ref{lemma vertical right adj and sp} that the square
\[
\begin{tikzcd}
\Sp(F(\ast, \beta)) \arrow{r}{} \arrow{d}{} & \Sp(F(\ast, \beta')) \arrow{d}{} \\
\Sp(F(\alpha , \beta)) \arrow{r}{} & \Sp(F(\alpha , \beta'))
\end{tikzcd}
\]
is horizontally right adjointable, so that we may define a functor $\Ical^\lhd \times (\Jcal^\rhd)^\op \rightarrow \cathat$ by passage to right adjoints of $\Sp(F(-, -))$ in the second variable. We now have
\[
\Sp(F(\ast,\ast)) = \lim_{\Jcal^\op}  \Sp(F(\ast, \beta)) = \lim_{\Jcal^\op} \lim_{\Ical} \Sp(F(\alpha, \beta)) = \lim_{\Ical} \lim_{\Jcal^\op} \Sp(F(\alpha, \beta)) =  \lim_{\Ical} \Sp(F(\alpha, \ast)).
\]
Our claim now follows from the fact that all the above limits remain limits after restricting to coconnective objects.
\end{proof}

A fundamental feature of Grothendieck prestable categories is that they are closed under tensor products in $\Pr^L$: 

\begin{theorem}[\cite{SAG} theorem C.4.2.1]\label{theorem tensor product in Groth} 
 For every pair of Grothendieck prestable categories $\ccal, \dcal$ the tensor product $\ccal \otimes \dcal$ (computed in $\Pr^L$) is Grothendieck prestable.
 \end{theorem}
 
 \begin{remark}
Let $\ccal$ and $\dcal$ be Grothendieck prestable categories. We have $\Sp(\ccal \otimes \dcal) = \Sp(\ccal) \otimes \Sp(\dcal)$, so applying theorem  \ref{theorem tensor product in Groth} we obtain a canonical t-structure on $\Sp(\ccal) \otimes \Sp(\dcal)$. This t-structure is characterized by the property that the category of connective objects is generated under colimits by those objects of the form $X \otimes Y$ with $X, Y$ connective objects of $\Sp(\ccal)$ and $\Sp(\dcal)$, respectively.
\end{remark}

It follows from  theorem \ref{theorem tensor product in Groth} that $\Groth$ inherits a symmetric monoidal structure from the category $\Mod_{\Sp^\cn}(\Pr^L)$ of additive categories. This in fact restricts to a symmetric monoidal structure on  $\Groth^\lex$ and  $\Groth^\cnorm$:

\begin{proposition}[\cite{SAG} propositions C.4.4.1, C.4.4.3]\label{proposition tensor y compact y lex}
 Let $f: \ccal \rightarrow \ccal'$ be a left exact (resp. compact) morphism in $\Groth$.   Then for every Grothendieck prestable category $\dcal$ the functor $f \otimes \id: \ccal \otimes \dcal \rightarrow \ccal' \otimes \dcal$ is left exact (resp. compact).
\end{proposition}

The symmetric monoidal structure on $\Groth$ is compatible with colimits of diagrams with almost compact transitions:

\begin{proposition} \label{proposition tensor products and colimits}
Let $\ccal$ be a Grothendieck prestable category. Then the composite functor
\[
\Groth^\acnorm \hookrightarrow \Groth \xrightarrow{- \otimes \ccal} \Groth
\]
preserves small colimits.
\end{proposition}
\begin{proof}
The analogous assertion with compactness instead of almost compactness is \cite{SAG} proposition C.4.5.1. The same proof works in the case of almost compactness.
\end{proof}

We will throughout the paper work with Grothendieck prestable categories subject to an additional completeness condition, which corresponds under the assignment $\Ccal \mapsto \Sp(\ccal)$ to the left completeness of the t-structure on $\Sp(\ccal)$.

\begin{notation}
Let $\Ccal$ be a Grothendieck prestable category. We denote by $\widehat{\ccal}$ the limit of the tower $\ccal_{\leq 0} \leftarrow \ccal_{\leq 1} \leftarrow \ccal_{\leq 2}\leftarrow \ldots$. We note that there is a canonical functor $\ccal \rightarrow \widehat{\ccal}$.
\end{notation}

\begin{definition}
Let $\Ccal$ be a Grothendieck prestable category. We say that $\ccal$ is complete if the projection  $\ccal \rightarrow \widehat{\ccal}$ is an equivalence.
\end{definition}

\begin{proposition}[\cite{SAG} proposition C.3.6.3]\label{proposition properties comp}
Let $\Ccal$ be a Grothendieck prestable category.
\begin{enumerate}[\normalfont (1)]
\item The category $\widehat{\Ccal}$ is a complete Grothendieck prestable category.
\item The projection $\ccal \rightarrow \widehat{\ccal}$ is colimit preserving and left exact.
\item For every complete Grothendieck prestable category $\Dcal$, restriction along the projection $\ccal \rightarrow \widehat{\ccal}$ provides an equivalence between the category of colimit preserving functors $\widehat{\ccal} \rightarrow \dcal$ and the category of colimit preserving functors $\ccal \rightarrow \dcal$.
\end{enumerate}
\end{proposition}

\begin{notation}
We denote by $\Groth_\comp$ the full subcategory of $\Groth$  on the complete Grothendieck prestable categories. 
\end{notation}

It follows from part (3) of proposition \ref{proposition properties comp} that $\Groth_\comp$ is a localization of $\Groth$, where the localization functor $\Groth \rightarrow \Groth_\comp$ sends each Grothendieck prestable category $\ccal$ to $\widehat{\ccal}$. The following proposition deals with the interaction of this localization functor with left exactness and almost compactness conditions on morphisms.

\begin{proposition}\label{proposition lex y almost compact completion}
Let $f: \ccal \rightarrow \dcal$ be a colimit preserving functor between Grothendieck prestable categories and let $\widehat{f}: \widehat{\ccal} \rightarrow \widehat{\dcal}$  be the induced functor.
\begin{enumerate}[\normalfont (1)]
\item If $f$ is left exact then $\widehat{f}$ is left exact.
\item $f$ is almost compact if and only if $\widehat{f}$ is almost compact.
\end{enumerate}
\end{proposition}
\begin{proof}
This follows directly from the fact that the projections $\ccal \rightarrow \widehat{\ccal}$ and $\dcal \rightarrow \widehat{\dcal}$, and their right adjoints, restrict to equivalences on the full subcategories of $n$-truncated objects for all $n \geq 0$.
\end{proof}

\begin{notation}
We denote by $\Groth_\comp^\lex$ (resp. $\Groth_\comp^\acnorm$) the wide subcategory of $\Groth_\comp$ on the left exact (resp. almost compact) arrows.
\end{notation}

\begin{proposition}\hfill
\begin{enumerate}[\normalfont (1)]
\item The category $ \Groth_\comp^\lex$ admits small limits, and these are preserved by the inclusions $\Groth_\comp^\lex \rightarrow \Groth_\comp \rightarrow \Groth \rightarrow \cathat$.
\item The category $\Groth_\comp^\acnorm$ admits small colimits, and these are preserved by the inclusion into $\Groth_\comp$.
\end{enumerate}
\end{proposition}
\begin{proof}
We first prove (1). By proposition \ref{prop colimits y limits}, it will be enough to show that if $\ccal_\alpha$ is a diagram in $\Groth^\lex$ with limit $\ccal$ such that $\ccal_\alpha$ is complete for all $\alpha$, then $\ccal$ is complete. Consider the commutative square
\[
\begin{tikzcd}
\ccal \arrow{d}{} \arrow{r}{} & \lim_n \ccal_{\leq n} \arrow{d}{} \\
\lim_\alpha \ccal_{\alpha} \arrow{r}{} & \lim_n \lim_\alpha (\ccal_\alpha)_{\leq n}.
\end{tikzcd}
\]
The fact that $\ccal$ is complete follows from the fact that the vertical arrows and the bottom horizontal arrow are equivalences.

We now establish (2). It follows from proposition \ref{proposition lex y almost compact completion} that the commutative square of inclusions
\[
\begin{tikzcd}
\Groth^\acnorm \arrow{r}{} & \Groth \\
\Groth^\acnorm_\comp \arrow{u}{} \arrow{r}{} & \Groth_\comp \arrow{u}{}
\end{tikzcd}
\]
is vertically left adjointable. The desired claim is now a consequence of proposition \ref{prop colimits y limits}.
\end{proof}

We also have the following compatibility of completion with tensor products:

\begin{proposition}[\cite{SAG} proposition C.4.6.1]\label{proposition localization complete sym mon}
The localization $\Groth \rightarrow \Groth_\comp$ is compatible with the symmetric monoidal structure on $\Groth$.
\end{proposition}

\begin{notation}
Let $\ccal, \dcal$ be objects of $\Groth_\comp$. We denote by $\ccal \otimeshat \dcal$ their tensor product in $\Groth_\comp$.
\end{notation}

This completed tensor product is compatible with colimits of diagrams with almost compact transitions:

\begin{proposition}\label{proposition completed tensor y colimits}
Let $\ccal$ be a complete Grothendieck prestable category. Then the composite functor
\[
\Groth^\acnorm_\comp \hookrightarrow \Groth_\comp \xrightarrow{- \otimeshat \ccal} \Groth_\comp
\]
preserves small colimits.
\end{proposition}
\begin{proof}
We have a commutative diagram
\[
\begin{tikzcd}
\Groth^\acnorm  \arrow{r}{}\arrow{d}{} &  \Groth  \arrow{d}{} \arrow{r}{- \otimes \ccal} & \Groth \arrow{d}{} \\
\Groth^\acnorm_\comp \arrow{r}{}  &  \Groth_\comp \arrow{r}{- \otimeshat \ccal} & \Groth_\comp
\end{tikzcd}
\]
where the vertical arrows are given by completion. Since the leftmost vertical arrow is a localization, to prove the proposition it will suffice to show that the induced functor $\Groth^\acnorm \rightarrow \Groth_\comp$ preserves small colimits. This follows directly from proposition \ref{proposition tensor products and colimits}, since the rightmost vertical arrow is a localization.
\end{proof}


\subsection{Almost rigidity}\label{subsection almost rigid} 

As discussed in \ref{subsection groth}, the  category $\Groth$ admits colimits of diagrams with compact transitions, and these are compatible with the symmetric monoidal structure. In particular, if $\Acal$ is a commutative algebra in $\Groth$ and $\Mcal, \Ncal$ are a pair of  $\Acal$-modules, then the Bar construction $\Bar_{\acal}(\Mcal, \Ncal)_\bullet$ admits a geometric realization in $\Groth$ as long as all its face maps are compact. This is in particular guaranteed whenever $\Acal$ has the property that for every $\Sp(\acal)$-module $\Mcal'$ in $\Pr^L$  the action map  $\Sp(\Acal) \otimes \Mcal' \rightarrow \Mcal'$ admits a colimit preserving right adjoint. This is a purely $2$-categorical property that may be formulated more generally when $\Sp(\acal)$ and $\Pr^L$ are replaced by an algebra $A$ in a monoidal $2$-category $\ccal$. Our next goal is to single out a class of algebras which do satisfy this property.

\begin{definition}\label{definition almost rigid}
Let $\ccal$ be a   monoidal $2$-category. We say that an algebra object $A$ of $\ccal$ is almost rigid if the multiplication map $\mu: A \otimes A \rightarrow A$ admits a right adjoint and the commutative square
\[
\begin{tikzcd}
A \otimes A \otimes A \arrow{d}{\mu \otimes \id} \arrow{r}{\id \otimes \mu} & A \otimes A \arrow{d}{\mu} \\
A \otimes A \arrow{r}{\mu} & A
\end{tikzcd}
\]
is both horizontally and vertically right adjointable. We say that $A$ is rigid if it is almost rigid and the unit map $1_{\ccal} \rightarrow \Acal$ admits a right adjoint.
\end{definition}

\begin{remark}
When specialized to the symmetric monoidal $2$-category of presentable stable categories and colimit preserving functors, definition \ref{definition almost rigid} recovers the notion of rigid presentable stable monoidal category from \cite{GR} section 1.9. 
\end{remark}

\begin{remark}\label{remark opposite almost rigid}
Let $\ccal$ be a symmetric monoidal $2$-category and let $A$ be an algebra in $\ccal$. Then $A$ is almost rigid if and only if its opposite algebra is almost rigid.
\end{remark}

\begin{remark}\label{remark tensor almost rigid}
Let $\ccal$ be a symmetric monoidal $2$-category and let $A, B$ be algebra objects of $\ccal$. Then the algebra $A \otimes B$ is the image of the pair $(A, B)$ along the symmetric monoidal functor of symmetric monoidal $2$-categories $\otimes: \ccal \times \ccal \rightarrow \ccal$. It follows that if $A$ and $B$ are almost rigid (resp. rigid) then $A \otimes B$ is almost rigid (resp. rigid).
\end{remark}

The following is our main result concerning the notion of almost rigidity:

\begin{theorem}\label{theo modules over almost rigid algebras}
Let $\ccal$ be a   monoidal $2$-category and let $\dcal$ be a $2$-category with a left action from $\ccal$. Let $A$ be an almost rigid algebra object of $\ccal$ and let $M$ be a left $A$-module in $\dcal$. Then: 
\begin{enumerate}[\normalfont (1)]
\item The action map $\mu_M : A \otimes M \rightarrow M$ admits a right adjoint.
\item Let $f: M \rightarrow N$ be a morphism of left $A$-modules in $\dcal$. Assume that the morphism of $\dcal$ underlying $f$ admits a right adjoint. Then the commutative square
\[
\begin{tikzcd}
A \otimes M \arrow{r}{\mu_M}\arrow{d}{\id \otimes f} & M \arrow{d}{f} \\
A \otimes N \arrow{r}{\mu_N} & N 
\end{tikzcd}
\]
is vertically right adjointable.
\end{enumerate}
\end{theorem}

Before giving the proof of theorem \ref{theo modules over almost rigid algebras} we note the following consequence:

\begin{corollary}\label{coro apply in arrow cat}
Let $\ccal$ be a   monoidal $2$-category and let $\dcal$ be a $2$-category with a left action from $\ccal$. Let $A$ be an almost rigid algebra object of $\ccal$ and let $f: M \rightarrow N$ be a morphism of left $A$-modules in $\dcal$. Then the commutative square
\[
\begin{tikzcd}
A \otimes M \arrow{r}{\mu_M}\arrow{d}{\id \otimes f} & M \arrow{d}{f} \\
A \otimes N \arrow{r}{\mu_N} & N 
\end{tikzcd}
\]
is horizontally right adjointable.
\end{corollary}
\begin{proof}
Consider the $2$-category $\Fun([1], \dcal)$ of arrows in $\dcal$. This admits a canonical left action from $\Fun([1], \ccal)$, which after restriction of scalars along the diagonal map $\ccal = \Fun([0], \ccal) \rightarrow \Fun([1], \ccal)$ gives rise to a left action from $\ccal$. We may identify the commutative square in the statement with the action map $\mu_f: A \otimes f \rightarrow f$. To prove the corollary it will suffice to show that $\mu_f$ admits a right adjoint. This follows from theorem \ref{theo modules over almost rigid algebras}.
\end{proof}

The proof of theorem \ref{theo modules over almost rigid algebras} requires some preliminary lemmas.

\begin{lemma}\label{lemma how to check vert adjointable}
Let $\ccal$ be a $2$-category and let $f: X \rightarrow Y$ be a morphism in $\ccal$.  Suppose that $f $ is the colimit (in the $2$-category $\operatorname{Arr}(\ccal)$ of arrows in $\ccal$) of a diagram of maps $f_\alpha : X_\alpha \rightarrow Y_\alpha$ and that this colimit is preserved by the source and target functors $\operatorname{Arr}(\ccal) \rightarrow \ccal$.  Assume that for every transition $\alpha \rightarrow \beta$ the square
\[
\begin{tikzcd}
X_\alpha \arrow{r}{} \arrow{d}{f_\alpha} & X_\beta \arrow{d}{f_\beta} \\
Y_\alpha \arrow{r}{} & Y_\beta
\end{tikzcd}
\]
is vertically right adjointable. Then:
\begin{enumerate}[\normalfont (1)]
\item $f$ is right adjointable, and for every $\alpha$ the   square
\[
\begin{tikzcd}
X_\alpha \arrow{r}{} \arrow{d}{f_\alpha} & X \arrow{d}{f}\\
Y_\alpha \arrow{r}{} & Y
\end{tikzcd}
\]
is vertically right adjointable.
\item Let
\[
\sigma: 
\begin{tikzcd}
X  \arrow{r}{ } \arrow{d}{f } & X' \arrow{d}{f'} \\
Y  \arrow{r}{ } & Y'
\end{tikzcd} \hspace{0.5cm}
\]
be a commutative square in $\ccal$  and assume that  for every $\alpha$ the square
\[
\begin{tikzcd}
X_\alpha \arrow{r}{} \arrow{d}{f_\alpha} & X' \arrow{d}{f'}\\
Y_\alpha \arrow{r}{} & Y'
\end{tikzcd}
\]
is vertically right adjointable. Then $\sigma$ is vertically right adjointable.
\end{enumerate}
\end{lemma}
\begin{proof}
This is a consequence of \cite{Pres} proposition 4.3.17 (applied to the epimorphism of $2$-categories $[1] \rightarrow \operatorname{Adj}$ given by the inclusion of the universal left adjoint inside the walking adjunction).
\end{proof}

\begin{lemma}\label{lemma colimit adjointable squares}
Let $\ccal$ be a $2$-category and let
\[
\sigma: 
\begin{tikzcd}
X  \arrow{r}{g } \arrow{d}{f } & X' \arrow{d}{f'} \\
Y  \arrow{r}{g'} & Y'
\end{tikzcd} \hspace{0.5cm}
\]
be a commutative square in $\ccal$. Suppose that $\sigma$ is the colimit (in the category $\Fun([1] \times [1], \ccal)$ of commutative squares in $\ccal$) of a diagram of squares
\[
\sigma_\alpha: 
\begin{tikzcd}
X_\alpha  \arrow{r}{g_\alpha} \arrow{d}{f_\alpha} & X' \arrow{d}{f'_\alpha} \\
Y_\alpha  \arrow{r}{g'_\alpha} & Y'_\alpha
\end{tikzcd} \hspace{0.5cm}
\]
and that this colimit is preserved by the evaluation functors $\Fun([1] \times [1], \ccal) \rightarrow \ccal$. Assume the following:
\begin{enumerate}[\normalfont (a)]
\item The square $\sigma_\alpha$ is vertically right adjointable for all $\alpha$.
\item For all transitions $\alpha \rightarrow \beta$ the square
\[
\begin{tikzcd}
X_\alpha  \arrow{r}{} \arrow{d}{f_\alpha} & X_\beta \arrow{d}{f_\beta} \\
Y_\alpha  \arrow{r}{} & Y_\beta
\end{tikzcd}
\]
is vertically right adjointable.
\item For all transitions $\alpha \rightarrow \beta$ the square
\[
\begin{tikzcd}
X'_\alpha  \arrow{r}{} \arrow{d}{f'_\alpha} & X'_\beta \arrow{d}{f'_\beta} \\
Y'_\alpha  \arrow{r}{} & Y'_\beta
\end{tikzcd}
\]
is vertically right adjointable.
\end{enumerate}
Then $\sigma$ is vertically right adjointable.
\end{lemma}
\begin{proof}
We regard $\sigma$ as a morphism between $g$ and $g'$ in the arrow $2$-category $\operatorname{Arr}(\ccal)$. From this point of view, our goal is to show that $\sigma$ admits a right adjoint. Similarly regard $\sigma_\alpha$ as a morphism from $g_\alpha$ to $g'_\alpha$ for all $\alpha$. Note that $\sigma$ is the colimit in $\operatorname{Arr}(\ccal)$ of the diagram $\sigma_\alpha$, and that this colimit is preserved by the source and target functors. By lemma \ref{lemma how to check vert adjointable} we may reduce to showing that for every transition $\alpha \rightarrow \beta$ the commutative square
\[
\begin{tikzcd}
g_\alpha \arrow{d}{\sigma_\alpha} \arrow{r}{} & g_\beta \arrow{d}{\sigma_\beta}\\
g'_\alpha \arrow{r}{} & g'_\beta
\end{tikzcd}
\]
is vertically right adjointable. The vertical arrows admit right adjoints by (a). Applying \cite{HSTI} lemma 3.4.12 we may reduce to showing that the image of the above square under the source and target functors $\operatorname{Arr}(\ccal) \rightarrow \ccal$ is vertically right adjointable. This is the content of (b) and (c).
\end{proof}

\begin{lemma}\label{lemma horizontal adjointability and split geom realiz}
Let $\ccal$ be a $2$-category and let
\[
\sigma: 
\begin{tikzcd}
X  \arrow{r}{g } \arrow{d}{f } & X' \arrow{d}{f'} \\
Y  \arrow{r}{g'} & Y'
\end{tikzcd} \hspace{0.5cm}
\]
be a commutative square in $\ccal$. Suppose that $f$ is the geometric realization  (in the $2$-category $\operatorname{Arr}(\ccal)$ of arrows in $\ccal$) of a simplicial object $f_\bullet: X_\bullet \rightarrow Y_\bullet$ whose image under the evaluation functors $\Arr(\ccal) \rightarrow \ccal$ is split.  Assume the following:
\begin{enumerate}[\normalfont (a)]
\item The maps $g$ and $g'$ admit right adjoints.
\item For every $n \geq 0$ the square
\[
\begin{tikzcd}
X_n  \arrow{r}{} \arrow{d}{f_n} & X  \arrow{d}{f } \\
Y_n  \arrow{r}{} & Y 
\end{tikzcd}
\]
is horizontally right adjointable.
\item For every $n \geq 0$ the square
\[
\begin{tikzcd}
X_n  \arrow{r}{} \arrow{d}{f_n} & X' \arrow{d}{f'} \\
Y_n  \arrow{r}{} & Y'
\end{tikzcd}
\]
is horizontally right adjointable.
\end{enumerate}
Then $\sigma$ is horizontally right adjointable.
\end{lemma}
\begin{proof}
Denote by $g^R, g'^R$ the right adjoints to $g$ and $g'$. For each $n \geq 0$ let $h_n: Y_n \rightarrow Y$ be the canonical map, and $h_n^R$ be its right adjoint. Let $\eta: fg^R \rightarrow g'^Rf'$ be the canonical natural transformation. Our goal is to show that $\eta$ is an isomorphism.

For every $n \geq 0$ the square $\sigma$ fits in a commutative diagram
\[
\begin{tikzcd}
X_n \arrow{r}{} \arrow{d}{f_n} & X  \arrow{r}{g} \arrow{d}{f } & X' \arrow{d}{f'} \\
Y_n \arrow{r}{h_n} & Y \arrow{r}{g'} & Y'
\end{tikzcd}
\]
Our assumptions guarantee that the outer commutative square and the left commutative square are horizontally right adjointable. It follows that $\eta$ becomes an isomorphism when composed with $h^R_n$, and in particular it becomes an isomorphism when composed with $h_nh_n^R$. The fact that the objects $Y$ and $Y_n$ fit in a split augmented simplicial object in $\ccal$ implies that the endofunctors $h_n h_n^R: Y \rightarrow Y$ sit in a split augmented simplicial object with augmentation $\id_Y$. It follows from this that $\eta$ is an isomorphism when composed with $\id_Y$, and hence $\eta$ is an isomorphism, as desired.
\end{proof}

\begin{proof}[Proof of theorem \ref{theo modules over almost rigid algebras}]
We begin with a proof of (1). Denote by $\Bar_A(A, M)_\bullet$ the Bar resolution of $M$, so that $\Bar_A(A, M)_n = A^{\otimes n+1} \otimes M$ for every $n \geq 0$. Let $\Bar^\ss_A(A, M)$ be the  semisimplicial object underlying $\Bar_A(A,M)_\bullet$. We have that $\Bar_A(A, M)_\bullet$ is a split  simplicial resolution of $M$, and in particular we see that $A \otimes M$  is the geometric realization of $A \otimes \Bar_A(A,M)_\bullet$.  It follows that $\mu_M$ is the  colimit of the map of semisimplicial objects
\[
A \otimes \Bar^\ss_A(A, M)_\bullet \rightarrow \Bar^\ss_A(A,M)_\bullet
\] 
and that this colimit is preserved by the evaluation functors. By lemma \ref{lemma how to check vert adjointable}  we may reduce to showing  that for each face map $\varphi: [n] \rightarrow [n+1]$ the induced commutative square
\[
\begin{tikzcd}
A \otimes \Bar_A(A, M)_{n+1} \arrow{d}{} \arrow{r}{} & A \otimes  \Bar_A(A, M)_{n } \arrow{d}{} \\\Bar_A(A, M)_{n+1} \arrow{r}{} & \Bar_A(A, M)_{n}
\end{tikzcd}
\]
is vertically right adjointable. Denote the above square by $\sigma$. We consider two cases:
\begin{itemize}
\item If $\varphi$ is different from the $0$-th face then $\sigma$ is obtained by tensoring the square
\[
\begin{tikzcd}
A \otimes A \arrow{r}{\id} \arrow{d}{\mu} & A \otimes A \arrow{d}{\mu} \\ 
A \arrow{r}{\id} & A
\end{tikzcd}
\]
with a square of the form
\[
\begin{tikzcd}
A^{\otimes n+1} \otimes M \arrow{d}{\id} \arrow{r}{} &  A^{\otimes n} \otimes M \arrow{d}{\id} \\
A^{\otimes n+1} \otimes M \arrow{r}{} &  A^{\otimes n} \otimes M.
\end{tikzcd}
\]
The fact that $\sigma$ is vertically right adjointable follows from the fact that the above two squares are vertically right adjointable (since $\mu: A \otimes A \rightarrow A$ admits a right adjoint).
\item If $\varphi$ is the $0$-th face then $\sigma$ is obtained by tensoring the square in definition \ref{definition almost rigid} with $A^{\otimes n} \otimes M$. The fact that $\sigma$ is vertically right adjointable then follows from the fact that $A$ is almost rigid.
\end{itemize}

We now prove (2).  We begin by addressing the following particular case:
\begin{enumerate}[$(\ast)$]
\item The commutative square
\[
\begin{tikzcd}[column sep = large]
A \otimes A \otimes M \arrow{d}{\id \otimes \mu_M} \arrow{r}{\mu \otimes \id} & A \otimes M \arrow{d}{\mu_{M}} \\
A \otimes M \arrow{r}{\mu_M} & M
\end{tikzcd}
\]
is vertically right adjointable.
\end{enumerate}

As in (1), we may write the commutative square in the statement as the colimit of the square
\[
\begin{tikzcd}[column sep = 3cm]
A \otimes A \otimes \Bar^\ss_A(A,M)_\bullet  \arrow{r}{ \mu \otimes \id } \arrow{d}{\id \otimes \mu_{\Bar^\ss_A(A,M)_\bullet} } & A \otimes \Bar^\ss_A(A,M)_\bullet \arrow{d}{\mu_{ \Bar^\ss_A(A,M)_\bullet}} \\
A \otimes  \Bar^\ss_A(A,M)_\bullet \arrow{r}{\mu_{ \Bar^\ss_A(A,M)_\bullet}} &  \Bar^\ss_A(A,M)_\bullet.
\end{tikzcd}
\]
To prove $(\ast)$ we will verify conditions (a), (b) and (c) of lemma \ref{lemma colimit adjointable squares}. Condition (c) was already established during our proof of (1). The squares arising from condition (b) are obtained by tensoring those squares arising from condition (c) with $A$, so we see that (b) also holds. It remains to show (a). This amounts to establishing $(\ast)$ in the case when $M$ is replaced by $\Bar_A(A,M)_n$ for some $n \geq 0$. In other words, we may reduce to the case $M = A \otimes M'$ is a free left $A$-module. In this case the commutative square from $(\ast)$    is obtained by tensoring the transpose of the square from definition \ref{definition almost rigid} with $M'$, and is therefore vertically right adjointable since $A$ is almost rigid.

We now address the general case of (2).  By lemma \ref{lemma horizontal adjointability and split geom realiz} it suffices to prove the following:
\begin{enumerate}[\normalfont (i)]
\item For every $n \geq 0$ the commutative square
\[
\begin{tikzcd}
A \otimes \Bar_A(A, M)_n \arrow{d}{} \arrow{r}{} &  \Bar_A(A, M)_n \arrow{d}{} \\
A \otimes M \arrow{r}{} & M
\end{tikzcd}
\]
is vertically right adjointable.
\item For every $n \geq 0$ the commutative square
\[
\begin{tikzcd}
A \otimes \Bar_A(A, M)_n \arrow{d}{} \arrow{r}{}  & \Bar_A(A, M)_n \arrow{d}{} \\
A \otimes N \arrow{r}{} & N
\end{tikzcd}
\]
is vertically right adjointable.
\end{enumerate}
We first prove (i). We argue by induction on $n$. The case $n = 0$ is the content of $(\ast)$. Assume now that $n > 0$ and that the assertion is known for $n - 1$. Then our square sits as the outer commutative square in the commutative diagram
\[
\begin{tikzcd}
A \otimes \Bar_A(A, M)_n \arrow{d}{} \arrow{r}{} &  \Bar_A(A, M)_n \arrow{d}{} \\
A \otimes \Bar_A(A, M)_{n-1} \arrow{d}{} \arrow{r}{} &  \Bar_A(A, M)_{n-1} \arrow{d}{} \\
A \otimes M \arrow{r}{} & M
\end{tikzcd}
\]
where here the top square is induced from the $0$-th face map. The inductive hypothesis guarantees that the bottom square is vertically right adjointable, so we may reduce to showing that the top square is vertically right adjointable. This is a consequence of $(\ast)$ in the case when $M$ is replaced with $ \Bar_A(A, M)_{n-1}$.

We now prove (ii). Our square sits as the outer commutative square in the following commutative diagram:
\[
\begin{tikzcd}
A \otimes \Bar_A(A, M)_n \arrow{d}{} \arrow{r}{}  & \Bar_A(A, M)_n \arrow{d}{} \\
A \otimes \Bar_A(A, N)_n \arrow{d}{} \arrow{r}{}  & \Bar_A(A, N)_n \arrow{d}{} \\
A \otimes N \arrow{r}{} & N.
\end{tikzcd}
\]
It follows from (i) (in the case when $M$ is replaced by $N$) that the bottom   square is vertically right adjointable. We may thus reduce to showing that the top square is vertically right adjointable. Replacing $M$ and $N$ with $\Bar_A(A, M)_n$ and $\Bar_A(A,N)_n$ we may now reduce to proving (2) in the case when $M = A \otimes M'$ and $N= A \otimes N'$ are free modules, and $f$ is of the form $\id \otimes f'$ for some map $f' : M' \rightarrow N'$. In this case the square in the statement is the tensor product of the square
\[
\begin{tikzcd}
A \otimes A    \arrow{r}{\mu } \arrow{d}{\id  } & A \arrow{d}{\id } \\
A \otimes A    \arrow{r}{\mu} & A
\end{tikzcd}
\]
and the square
\[
\begin{tikzcd}
M' \arrow{r}{\id} \arrow{d}{f'} & M' \arrow{d}{f'} \\
N'  \arrow{r}{\id} & N'.
\end{tikzcd}
\]
The result now follows from the fact that both of the above squares are vertically right adjointable.
\end{proof}


\subsection{The \texorpdfstring{$2$}{2}-category \texorpdfstring{$\Groth^{\st}_\comp$}{Grothstcomp}}\label{subsection grothstcomp}

The symmetric monoidal categories  $\Groth$ and $\Groth_\comp$ admit natural $2$-categorical enhancements, so it makes sense to specialize the notion of almost rigidity from \ref{subsection almost rigid} to the case $\ccal = \Groth$ or $\ccal = \Groth_\comp$. For the purposes of this paper we will in fact need to work with algebras which are almost rigid in a slight variant of $\Groth_\comp$, which we introduce in this section. We begin by recalling the following notion from \cite{SAG} remark C.3.1.3:
\begin{definition}
Let $\ccal$ be a presentable stable category. A core for $\ccal$ is a full subcategory of $\ccal$ which is closed under small colimits and extensions. 
\end{definition}

In \cite{SAG} remark C.3.1.3, Lurie defines a category $\Groth^+$ with the following properties:
\begin{itemize}
\item Objects of $\Groth^+$ are pairs $(\ccal, \ccal')$ of a presentable stable category $\ccal$ and a core $\ccal'$ for $\ccal$.
\item A morphism $(\ccal, \ccal') \rightarrow (\dcal, \dcal')$ in $\Groth^+$ is a colimit preserving functor $f: \ccal \rightarrow \dcal$ such that $f(\ccal') \subseteq \dcal'$.
\end{itemize}
As discussed in \cite{SAG} remark C.4.2.3, $\Groth^+$ admits a symmetric monoidal structure, where the tensor product of a pair of objects $(\ccal, \ccal')$ and $(\dcal, \dcal')$ is given by the presentable stable category $\ccal \otimes \dcal$ equipped with the smallest core containing the object $X \otimes Y$ for all $X$ in $\ccal'$ and $Y$ in $\dcal'$. Furthermore, the assignment $\ccal \mapsto (\Sp(\ccal), \ccal)$ provides a fully faithful symmetric monoidal embedding $\Groth \rightarrow \Groth^+$.

Our next goal is to study a variant of $\Groth^+$ that accommodates for functors that are compatible with cores only up to a shift.

\begin{notation}
For each presentable stable category $\ccal$ we denote by $\Core(\ccal)$ the set of cores of $\ccal$. We equip $\Core(\ccal)$ with the preorder where $\ccal' \leq \ccal''$ if there exists an integer $n$ such that $\Sigma^n \ccal' \subseteq \ccal'$. We denote by $\Core(\ccal)/{\simeq}$ the underlying poset. In other words, elements of $\Core(\ccal)/{\simeq}$ consist of equivalence classes of cores, where $\ccal'$ and $\ccal''$ are equivalent if $\ccal' \leq \ccal'' \leq \ccal'$. For each core $\ccal'$ for $\ccal$, we denote by $[\ccal']$ its corresponding equivalence class.
\end{notation}

\begin{construction}\label{construction Grothoverline}
Let $\rho: \Pr^L_{\St} \rightarrow \Cat$ be the functor defined as follows:
\begin{itemize}
\item For each presentable stable category $\ccal$ we let $\rho(\ccal) = \Core(\ccal)/{\simeq}$, regarded as a category in such a way that there is a unique arrow $[\ccal'] \rightarrow [\ccal'']$ if and only if $\ccal' \leq \ccal''$.
\item For each morphism $f: \ccal \rightarrow \dcal$ in $\Pr^L_{\St}$ we let $\rho(f): \Core(\ccal)/{\simeq}\rightarrow \Core(\dcal)/{\simeq}$ be the functor that sends the equivalence class of a core $\ccal'$ on $\ccal$ to the equivalence class of the smallest core on $\dcal$ containing $f(\ccal')$.
\end{itemize}
We make $\rho$ into a morphism of operads, where the action on operations is given as follows:
\begin{itemize}
\item For each finite family of presentable stable categories $\ccal_1, \ldots, \ccal_n, \dcal$ and every functor $f: \ccal_1 \times \ldots \times \ccal_n \rightarrow \dcal$ which is colimit preserving in each variable, we let 
\[
\rho(f): \Core(\ccal_1)/{\simeq} \times \ldots \times \Core(\ccal_n)/{\simeq} \rightarrow \Core(\dcal)/{\simeq}
\]
be the map that sends a family of equivalence classes of cores $\ccal'_1, \ldots, \ccal'_n$ to the equivalence class of the smallest core on $\dcal$ containing $f(\ccal'_1 \times \ldots \times \ccal'_n)$.
\end{itemize}
We let $  \overline{\Groth} \rightarrow \Pr^L_{\St}$ be the cocartesian fibration of operads classified by $\rho$.
\end{construction}

\begin{remark}
 The symmetric monoidal category $\overline{\Groth}$ from construction \ref{construction Grothoverline} admits the following informal description:
\begin{itemize}
\item Objects of $\overline{\Groth}$ are pairs $(\ccal, [\ccal'])$ of a presentable stable category $\ccal$ and an equivalence class of cores for $\ccal$. 
\item A morphism $(\ccal, [\ccal']) \rightarrow (\dcal, [\dcal'])$ in $\overline{\Groth}$ consists of a colimit preserving functor $f: \ccal \rightarrow \dcal$ such that there exists an integer $n$ with the property that $\Sigma^n f(\ccal') \subseteq \dcal'$.
\item The tensor product of a pair of objects $(\ccal, [\ccal']),(\dcal, [\dcal'])$ of $\overline{\Groth}$ is given by the presentable stable category $\ccal \otimes \dcal$ equipped with the equivalence class of the smallest core containing the objects $X \otimes Y$ for $X$ in $\ccal'$ and $Y$ in $\dcal'$.
\end{itemize}
\end{remark}

\begin{remark}
Let $\rho$ be as in construction \ref{construction Grothoverline}. For each presentable stable category $\ccal$ the category $\rho(\ccal)$ admits finite  colimits. Furthermore, if $f: \ccal_1 \times \ldots \times \ccal_n \rightarrow \dcal$ is an operation in $\Pr^L_\St$, the operation $\rho(f)$ preserves finite colimits in each variable. It follows from this that $\overline{\Groth}$ admits finite colimits, which are preserved by the forgetful functor to $\Pr^L_{\St}$. Furthermore, the symmetric monoidal structure on $\overline{\Groth}$ is compatible with finite colimits. 
\end{remark}

\begin{remark}
Let $\rho$ be as in construction \ref{construction Grothoverline}. Then $\rho$ receives a lax symmetric monoidal natural transformation from the functor $\Pr^L_{\St} \rightarrow \Cat$ that sends each presentable stable category $\ccal$ to the poset of cores on $\ccal$ ordered by inclusion. This transformation classifies a strictly symmetric monoidal functor $\Groth^+ \rightarrow \overline{\Groth}$, which at the level of objects sends a pair of a presentable stable category $\ccal$ and a core $\ccal'$ for $\ccal$ to $(\ccal, [\ccal'])$. In particular, we obtain a symmetric monoidal functor $\Groth \rightarrow \overline{\Groth}$ that sends each Grothendieck prestable category $\ccal$ to $(\Sp(\ccal), [\ccal])$.
\end{remark}

\begin{notation}
We denote by $\Groth^\st$  the full subcategory of $\overline{\Groth}$ on those objects of the form $(\Sp(\ccal), [\ccal])$ with $\ccal$ in $\Groth$. We equip $\Groth^\st$ with the induced symmetric monoidal structure.
\end{notation}

Our next goal is to study a full subcategory of $\Groth^\st$ on objects that we will call complete.

\begin{proposition}\label{proposition well defined complete}
Let $\ccal, \dcal$ be Grothendieck prestable categories such that $(\Sp(\ccal), [\ccal])$ is equivalent to $(\Sp(\dcal), [\dcal])$ (as objects of $\Groth^\st$). Then $\ccal$ is complete if and only if $\dcal$ is complete.
\end{proposition} 
\begin{proof}
Assume that $\ccal$ is complete; we will show that $\dcal$ is complete. Fix an equivalence $(\Sp(\ccal), [\ccal]) = (\Sp(\dcal), [\dcal])$ in $\Groth^\st$. In other words, this is an equivalence $f: \Sp(\ccal) \rightarrow \Sp(\dcal)$ which is both left and right t-exact up to shifts. Denote by $g$ the inverse to $f$. To prove that $\dcal$ is complete it will suffice to show the following:
\begin{enumerate}[\normalfont (i)]
\item Assume that $\ldots X_2 \rightarrow X_1 \rightarrow X_0$ is an inverse system of objects of $\dcal$ such that $\tau_{\leq n}(X_{n+1}) = X_n$ for all $n \geq 0$, and let $X$ be its limit in $\Sp(\Dcal)$. Then $X$ is connective and $\tau_{\leq n}(X) = X_n$ for all $n \geq 0$.
\item If $X$ is an object of $\dcal$ such that $\tau_{\leq n}(X) = 0$ for all $n\geq 0$ then $X = 0$.
\end{enumerate}

We first address (i). Since $g$ is right t-exact up to a shift we have that the inverse system $\tau_{\leq m}(g(X_n))$ is eventually constant for all integers $m$. Using the fact that $\ccal$ is complete  we deduce that the limit $g(X)$ of the system $g(X_n)$ has the property that if $m$ is an integer then $\tau_{\leq m}(g(X)) = \tau_{\leq m}(g(X_n))$ for $n$ sufficiently large. Since $f$ is right t-exact up to a shift we have that if $m'$ is an integer then $\tau_{\leq m'}(f(g(X))) = \tau_{\leq m'}(f(g(X_n)))$ for $n$ sufficiently large. Consequently $\tau_{\leq m'}(X) = \tau_{\leq m'}(X_n)$ for $n$ sufficiently large. Assume now that $m' \geq 0$. Then this implies $\tau_{\leq m'}(X) = X_{m'}$, as desired.

We now prove (ii). We will do so by showing that $g(X) = 0$. Since $\ccal$ is complete, it will be enough to show that $\tau_{\leq m}(g(X)) = 0$ for all integers $m$. Indeed,  using the fact that $g$ is right t-exact up to a shift we have $\tau_{\leq m}((g(X)) = \tau_{\leq m}(g(\tau_{\leq n}(X))) = 0$ for $n$ sufficiently large.
\end{proof}
 
\begin{notation}
We denote by $\Groth^\st_{\comp}$ the full subcategory of $\Groth^\st$ on the objects of the form $(\Sp(\ccal), [\ccal])$ with $\ccal$ a complete Grothendieck prestable category.
\end{notation}
 
\begin{proposition}\label{proposition univ prop completion stable}
Let $\ccal$ be a Grothendieck prestable category, and let $\Dcal$ be a complete Grothendieck prestable category. Let $p: \Ccal \rightarrow \widehat{\Ccal}$ be the canonical map. Then precomposition with $\Sp(p)$ induces an equivalence
\[
\Hom_{\Groth^\st}((\Sp(\widehat{\ccal}),[\widehat{\ccal}]), (\dcal, [\dcal])) = \Hom_{\Groth^\st}((\Sp( \ccal),[\ccal]), (\dcal, [\dcal])) .
\]  
\end{proposition}
\begin{proof}
It   suffices to show that for every integer $n$, precomposition with $\Sp(p)$ induces an equivalence between the space of colimit preserving functors $\Sp(\widehat{\Ccal}) \rightarrow \Sp(\dcal)$ which are right t-exact up to a shift by $n$, and the space of colimit preserving functors $\Sp(\ccal) \rightarrow \Sp(\dcal)$ which are right t-exact up to a shift by $n$. Shifting the t-structure on $\Sp(\dcal)$, we may reduce to the case $n = 0$, which follows directly from the universal property of $\widehat{\ccal}$.
\end{proof}

\begin{corollary}
The inclusion $\Groth^\st_{\comp} \rightarrow \Groth^\st$ admits a left adjoint which is compatible with the symmetric monoidal structure on $\Groth^\st$.
\end{corollary}
\begin{proof}
The existence of a left adjoint $L^\st : \Groth^\st \rightarrow \Groth^\st_\comp$ to the inclusion follows directly from proposition \ref{proposition univ prop completion stable}. It remains to show that  the class of $L^\st$-local maps in $\Groth^\st$ is stable under tensor products. Let $f: (\Sp(\ccal), [\ccal]) \rightarrow (\Sp(\dcal), [\dcal])$ be an $L^\st$-local map. This fits into a commutative square
\[
\begin{tikzcd}
(\Sp(\ccal), [\ccal]) \arrow{r}{} \arrow{d}{\Sp(p_\ccal)} & (\Sp(\dcal), [\dcal])  \arrow{d}{\Sp(p_\dcal)} \\
(\Sp(\widehat{\ccal}),[\widehat{\ccal}]) \arrow{r}{} & (\Sp(\widehat{\dcal}),[\widehat{\dcal}])
\end{tikzcd}
\]
where the bottom arrow is an equivalence, and the vertical arrows are the unit maps for $L^\st$ described by proposition \ref{proposition univ prop completion stable}. We may thus reduce to analyzing the case $f = \Sp(p_\ccal)$. Let $\ecal$ be a Grothendieck prestable category. We wish to show that the map
\[
\Sp(p_\ccal) \otimes \id: (\Sp(\ccal), [\ccal]) \otimes (\Sp(\ecal), [\ecal]) \rightarrow (\Sp(\widehat{\ccal}), \widehat{\ccal}) \otimes (\Sp(\ecal), [\ecal])
\]
is $L^\st$-local. We may identify the above with the map
\[
\Sp(p_\ccal \otimes \id): (\Sp(\ccal \otimes \ecal), [\ccal \otimes \ecal]) \rightarrow (\Sp(\widehat{\ccal} \otimes \ecal), \widehat{\ccal} \otimes \ecal)
\]
which fits into a commutative square
\[
\begin{tikzcd}[column sep = large]
(\Sp(\ccal \otimes \ecal), [\ccal \otimes \ecal]) \arrow{r}{\Sp(p_\ccal \otimes \id)} \arrow{d}{} & (\Sp(\widehat{\ccal} \otimes \ecal), \widehat{\ccal} \otimes \ecal) \arrow{d}{} \\
(\Sp(\widehat{\ccal \otimes \ecal}), [\widehat{\ccal \otimes \ecal}])  \arrow{r}{\Sp(\widehat{p_\ccal \otimes \id})} & (\Sp(\widehat{\widehat{\ccal} \otimes \ecal}), \widehat{\widehat{\ccal} \otimes \ecal}) .
\end{tikzcd}
\]
Here the bottom horizontal arrow is an equivalence by proposition \ref{proposition localization complete sym mon}, and the vertical arrows are the units maps for $L^\st$. It follows that the top horizontal arrow is $L^\st$-local, as desired.
\end{proof}

\begin{corollary}
The projection $\Groth \rightarrow \Groth^\st$ induces a symmetric monoidal functor $\Groth_\comp \rightarrow \Groth^\st_\comp$.
\end{corollary}
\begin{proof}
It suffices to show that this projection maps morphisms which are local for the localization $L: \Groth \rightarrow \Groth_\comp$ to morphisms which are local for the localization $L^\st: \Groth^\st \rightarrow \Groth^\st_{\comp}$. Let $f: \ccal \rightarrow \dcal$ be an $L$-local map. Then $f$ fits into a commutative square
\[
\begin{tikzcd}
\ccal \arrow{r}{f} \arrow{d}{} & \dcal \arrow{d}{} \\
\widehat{\ccal} \arrow{r}{\widehat{f}} & \widehat{\dcal}.
\end{tikzcd}
\]
Here the bottom horizontal arrow is an isomorphism, so it suffice to show that the projection $\Groth \rightarrow \Groth^\st$ maps the vertical arrows to $L^\st$-local maps. This follows from proposition \ref{proposition univ prop completion stable}.
\end{proof}

We now discuss the natural $2$-categorical structure on $\Groth^\st_\comp$.

\begin{construction}
Let $\Cat^\omega$ be the full subcategory of $\Cat$ on the compact objects. For each category $\Ical$ the presentable additive category $\Ical \otimes \Sp^\cn = \Fun(\Ical^\op, \Sp^\cn)$ is Grothendieck prestable, so we have a symmetric monoidal functor 
\[
\Cat^\omega \hookrightarrow \Cat \xrightarrow{- \otimes \Sp^\cn} \Groth
\]
which we will denote by $\varphi$. The composition of $\varphi$ with the inclusion $\Groth \rightarrow \Groth^+$ preserves finite colimits, and consequently for  every object $\ccal$ in $\Groth$ the composite functor
\[
\Cat^\omega \xrightarrow{\varphi} \Groth \xrightarrow{ - \otimes \ccal} \Groth
\]
preserves finite colimits. We may thus give $\Groth$ the structure of a symmetric monoidal $2$-category, where the Hom category between a pair of objects $\ccal, \dcal$ is given by the ind-object $\Ical \mapsto \Hom_{\Groth}(\varphi(\Ical) \otimes \ccal, \dcal)$ on $\Cat^\omega$.

Let $\varphi^\st: \Cat^\omega \rightarrow \Groth^\st$ be the composition of $\varphi$ with the projection $\Groth \rightarrow \Groth^\st$. As above,  the composition of $\varphi^\st$ with the inclusion $\Groth^\st \rightarrow \overline{\Groth}$ preserves finite colimits, so for every object $(\ccal, [\ccal])$ in $\Groth^\st$ the composite functor
\[
\Cat^\omega \xrightarrow{\varphi^\st} \Groth \xrightarrow{ - \otimes (\ccal, [\ccal])} \Groth^\st
\]
preserves finite colimits. It follows that $\Groth^\st$ has the structure of a symmetric monoidal $2$-category as well.

Let $L: \Groth \rightarrow \Groth_\comp$ and $L^\st: \Groth^\st \rightarrow \Groth^\st_\comp$ be the localization functors. Then the functors $L \circ \varphi$ and $L^\st \circ \varphi^\st$ induce $2$-categorical structures on $\Groth_\comp$ and $\Groth^\st_\comp$. We note that the commutative square of symmetric monoidal categories
\[
\begin{tikzcd}
\Groth \arrow{r}{} \arrow{d}{L} & \Groth^\st \arrow{d}{L^\st} \\
\Groth_\comp \arrow{r}{} & \Groth_\comp^\st
\end{tikzcd}
\] 
admits an enhancement to a commutative square of symmetric monoidal $2$-categories and symmetric monoidal functors, where the vertical arrows are localization functors.
\end{construction}

\begin{remark}\label{remark adjointability in groth overline}
The projection $\Groth^\st \rightarrow \Pr^L_{\St}$ admits the structure of symmetric monoidal functor of symmetric monoidal $2$-categories. This induces fully faithful functors at the level of Homs: the Hom category in $\Groth^\st$ from $(\Sp(\ccal), [\ccal])$ to $(\Sp(\dcal), [\dcal])$ may be identified with the category of colimit preserving functors $f: \Sp(\ccal), \Sp(\dcal)$ which are right t-exact up to a shift. In particular, we have the following:
\begin{itemize}
\item A morphism $(\Sp(\ccal), [\ccal]) \rightarrow (\Sp(\dcal), [\dcal])$ in $\Groth^\st$  is right adjointable if and only if the underlying map of presentable stable categories $f: \ccal \rightarrow \dcal$ admits a colimit preserving right adjoint  which is right t-exact up to a shift.
\item A commutative square in $\Groth^\st$ is vertically right adjointable if and only if its vertical arrows are right adjointable, and the underlying commutative square of presentable stable categories is vertically right adjointable.
\end{itemize}
\end{remark}


\subsection{Completed relative tensor products}\label{subsection completed relative}

Our next goal is to study relative tensor products in $\Groth_\comp$. As already noted, we will only allow ourselves to form relative tensor products over a class of algebras that has well behaved Bar resolutions. The necessary condition is supplied by the notion of almost rigidity from \ref{subsection almost rigid}, specialized to algebras in the symmetric monoidal $2$-category $\Groth^\st_\comp$ from \ref{subsection grothstcomp}:

\begin{definition}\label{definition admissible algebra}
Let $\ccal$ be an algebra in $\Groth_\comp$. We say that $\ccal$ is admissible if $(\Sp(\ccal), \ccal)$ defines an almost rigid algebra of $\Groth^\st_\comp$.
\end{definition}

\begin{remark}\label{remark conditions admissibility}
It follows from remark \ref{remark adjointability in groth overline} that an algebra $\ccal$ in $\Groth_\comp$ is admissible if and only if the following conditions are satisfied:
\begin{enumerate}[\normalfont (a)]
\item The functor $\mu: \Acal \otimeshat \acal \rightarrow \acal$ is compact and the right adjoint to $\Sp(\mu)$ is right t-exact up to a shift.
\item The commutative square of categories
\[
\begin{tikzcd}[column sep = large]
 \Sp( \Acal \otimeshat \acal \otimeshat \acal ) \arrow{d}{ \Sp(\mu) \otimeshat \id } \arrow{r}{ \id \otimeshat \Sp(\mu) } &\Sp( \acal \otimeshat \acal )  \arrow{d}{ \mu } \\
 \Sp( \acal \otimeshat \acal )  \arrow{r}{ \Sp(\mu) } & \Sp(\acal) 
\end{tikzcd}
\]
is both horizontally and vertically right adjointable.
\end{enumerate}
\end{remark}

Combining theorem \ref{theo modules over almost rigid algebras} and corollary \ref{coro apply in arrow cat}  with remark \ref{remark adjointability in groth overline} we obtain the following:
 
\begin{theorem}\label{theorem modules over admissible}
Let $\acal$ be an admissible algebra in $\Groth_\comp$ and let $\Mcal$ be a left $\acal$-module in $\Groth_\comp$. Then:
\begin{enumerate}[\normalfont (1)]
\item The action map $\mu_{\Mcal}: \acal \otimeshat \Mcal \rightarrow \Mcal$ is compact and the right adjoint to $\Sp(\mu_{\Mcal})$ is right t-exact up to a shift.
\item Let $f: \Mcal \rightarrow \Ncal$ be a morphism of left $\acal$-modules in $\Groth_\comp$. Assume that $f$ is compact and the right adjoint to $\Sp(f)$ is right t-exact up to a shift. Then the commutative square 
\[
\begin{tikzcd}[column sep = large]
\Sp(\acal \otimeshat \Mcal) \arrow{r}{\Sp(\mu_{\Mcal})} \arrow{d}{\Sp(\id \otimeshat f)} & \Sp(\Mcal) \arrow{d}{\Sp(f)} \\
\Sp(\Acal \otimeshat \Ncal) \arrow{r}{\Sp(\mu_{\Ncal})}  & \Sp(\Ncal)
\end{tikzcd}
\]
is vertically right adjointable.
\item Let $f: \Mcal \rightarrow \Ncal$ be a morphism of left $\acal$-modules in $\Groth_\comp$. Then the commutative square 
\[
\begin{tikzcd}[column sep = large]
\Sp(\acal \otimeshat \Mcal) \arrow{r}{\Sp(\mu_{\Mcal})} \arrow{d}{\Sp(\id \otimeshat f)} & \Sp(\Mcal) \arrow{d}{\Sp(f)} \\
\Sp(\Acal \otimeshat \Ncal) \arrow{r}{\Sp(\mu_{\Ncal})}  & \Sp(\Ncal)
\end{tikzcd}
\]
is horizontally right adjointable.
\end{enumerate}
\end{theorem}
 
\begin{notation}
Let $\acal$ be a algebra in $\Groth$, and let $\Mcal, \Ncal$ be a pair of a right and a left $\acal$-module in $\Groth$. We will denote by $\Bar_\acal(\Mcal, \Ncal)_\bullet$ the corresponding Bar construction. If $\acal, \Mcal, \Ncal$ are complete we will denote by $\Barhat_\acal(\Mcal, \Ncal)_\bullet$ the completion of $\Bar_\acal(\Mcal, \Ncal)_\bullet$. In other words, $\Barhat_\acal(\Mcal, \Ncal)_\bullet$ is the Bar construction computed in the symmetric monoidal category $\Groth_\comp$.
\end{notation}

\begin{corollary}\label{coro face maps compact}
Let $\acal$ be an admissible algebra in $\Groth_\comp$,  and let $\Mcal, \Ncal$ be a pair of a right and a left $\acal$-module in $\Groth_\comp$. Then:
\begin{enumerate}[\normalfont (1)]
\item Every face map of $\Barhat_{\Acal}(\Mcal, \Ncal)_\bullet$ is compact.
\item Every face map of $\Bar_{\acal}(\Mcal, \Ncal)_\bullet$ is almost compact.
\end{enumerate}
\end{corollary}
\begin{proof}
 Part (1) follows from theorem \ref{theorem modules over admissible} and its opposite, since every face map of the simplicial object $\Barhat_{\acal}(\Mcal, \Ncal)_{\bullet}$ is the action map  for some left or right $\acal$-module in $\Groth_\comp$. Part (2) follows from this together with proposition \ref{proposition lex y almost compact completion}.
\end{proof}

\begin{corollary}\label{corollary tensor product over admissible}
Let $\acal$ be an admissible commutative algebra in $\Groth_\comp$. Then:
\begin{enumerate}[\normalfont (1)]
\item  $\Mod_{\acal}(\Groth_\comp)$ has a symmetric monoidal structure where the tensor product of a pair of modules $\Mcal$ and $\Ncal$ is given by the geometric realization of $\Barhat_{\acal}(\Mcal, \Ncal)_{\bullet}$.
\item Let $f: \acal \rightarrow \acal'$ be a morphism of commutative algebras in $\Groth_\comp$, and assume that $\acal'$ is admissible. Then there is a symmetric monoidal extension of scalars functor $\Mod_{\acal}(\Groth_\comp) \rightarrow \Mod_{\acal'}(\Groth_\comp)$ which sends each $\acal$-module $\Mcal$ to the geometric realization of $\Barhat_{\acal}(\acal', \Mcal)_\bullet$.
\end{enumerate}
\end{corollary}
\begin{proof}
To prove the proposition it will suffice to show the following:
\begin{enumerate}[$(\ast)$]
\item  For every pair of $\acal$-modules $\Mcal$ and $\Ncal$ in $\Groth_\comp$ the Bar construction  $\Barhat_{\acal}(\Mcal, \Ncal)_{\bullet}$ admits a geometric realization in $\Groth_\comp$, which is preserved by the functor $- \otimeshat \ecal : \Groth_\comp \rightarrow \Groth_\comp$ for every object $\ecal$.
\end{enumerate}
Since $\Groth_\comp$ is a symmetric monoidal localization of $\Groth$, it will be enough to show that  $\Barhat_{\acal}(\Mcal, \Ncal)_{\bullet}$ admits a geometric realization in $\Groth$ which is preserved by the functor $- \otimes \ecal : \Groth  \rightarrow \Groth $ for every object $\ecal$.  This follows from corollary \ref{coro face maps compact}, in light of propositions \ref{prop colimits y limits} and \ref{proposition tensor products and colimits}.
\end{proof}

\begin{notation}
Let $\acal$ be an admissible commutative algebra in $\Groth_\comp$. For each pair of $\acal$-modules $\Mcal, \Ncal$ in $\Groth_\comp$ we denote by $\Mcal \otimeshat_{\acal} \Ncal$ their tensor product in $\Mod_{\acal}(\Groth_\comp)$.
\end{notation}

\begin{remark}\label{remark functoriality modcomp}
Let $\CAlg^{\text{adm}}(\Groth_\comp)$ be the category of admissible commutative algebras in $\Groth_{\comp}$. Then as in corollary \ref{corollary tensor product over admissible} we may construct a functor 
\[
\CAlg^{\text{adm}}(\Groth_\comp) \rightarrow \CAlg(\cathat)
\]
that sends each admissible commutative algebra $\acal$ to $\Mod_{\acal}(\Groth_\comp)$.
\end{remark}

We now discuss how the formation of completed relative tensor products interacts with various properties of morphisms.

\begin{definition}
Let $\acal$ be an algebra in $\Groth $ and let $f: \Mcal \rightarrow \Mcal'$ be a morphism of left $\acal$-modules. We say that $f$ is left exact (resp. compact, resp. almost compact) if the underlying functor of Grothendieck prestable categories is left exact (resp. compact, resp. almost compact). 
\end{definition}

\begin{proposition}\label{proposition relative tensor compact}
Let $\acal$ be an admissible commutative algebra in $\Groth_\comp$. Let $f: \Mcal \rightarrow \Mcal'$ be a  compact morphism of $\acal$-modules in $\Groth_\comp$, and let $\Ncal$ be an $\acal$-module in $\Groth_\comp$. Then the induced map  $f \otimeshat \id : \Mcal \otimeshat_{\acal} \Ncal \rightarrow \Mcal' \otimeshat_\acal \Ncal$ is almost compact.
\end{proposition}
\begin{proof}
 By proposition \ref{proposition tensor y compact y lex}  and corollary \ref{coro face maps compact} we have that 
 \[
 \Bar_{\acal}(f, \Ncal)_{\bullet}: \Bar_{\acal}(\Mcal, \Ncal)_{\bullet} \rightarrow \Bar_{\acal}(\Mcal', \Ncal)_{\bullet}
 \]
is a levelwise compact morphism of simplicial objects of $\Groth$ with almost compact face maps. Applying proposition \ref{prop colimits y limits} we deduce that $ \Bar_{\acal}(f, \Ncal)_{\bullet}$ admits a geometric realization in $\Groth$, which is almost compact. Its completion is $f \otimeshat \id $, so the proposition follows from an application of proposition \ref{proposition lex y almost compact completion}.
\end{proof}

\begin{proposition}\label{proposition relative tensor left exact}
Let $\acal$ be an admissible commutative algebra in $\Groth_\comp$. Let $f: \Mcal \rightarrow \Mcal'$ be a left exact morphism of $\acal$-modules in $\Groth_\comp$, and let $\Ncal$ be an $\acal$-module in $\Groth_\comp$. Then the induced map  $f \otimeshat \id : \Mcal \otimeshat_{\acal} \Ncal \rightarrow \Mcal' \otimeshat_\acal \Ncal$ is left exact.
\end{proposition}
\begin{proof}
 By proposition \ref{proposition tensor y compact y lex}  and corollary \ref{coro face maps compact} we have that 
 \[
 \Bar_{\acal}(f, \Ncal)_{\bullet}: \Bar_{\acal}(\Mcal, \Ncal)_{\bullet} \rightarrow \Bar_{\acal}(\Mcal', \Ncal)_{\bullet}
 \]
 is a levelwise left exact morphism of simplicial objects of $\Groth$ with almost compact face maps. Passing to completions and using proposition \ref{proposition lex y almost compact completion} we see that the same holds for $\Barhat_{\acal}(f, \Ncal)_\bullet$. To show that $f \otimeshat \id$ is left exact it will suffice to show that  $\Barhat_{\acal}(f, \Ncal)_\bullet$ admits a left exact geometric realization in $\Groth$. By a combination of proposition \ref{proposition commute limits and colimits} and lemma \ref{lemma vertical right adj and sp} it will be enough to show that for every face map $[n] \rightarrow [n+1]$ the commutative square
 \[
 \begin{tikzcd}
 \Sp(\Barhat_{\acal}(\Mcal, \Ncal)_{n+1}) \arrow{r}{} \arrow{d}{} & \Sp( \Barhat_{\acal}(\Mcal, \Ncal)_{n } ) \arrow{d}{} \\
 \Sp( \Barhat_{\acal}(\Mcal', \Ncal)_{n+1} ) \arrow{r}{} & \Sp( \Barhat_{\acal}(\Mcal', \Ncal)_{n} ) 
 \end{tikzcd} 
 \]
 is horizontally right adjointable. The above square has the form
 \[
 \begin{tikzcd}[column sep = large]
 \Sp(\acal \otimeshat \ccal) \arrow{r}{ \Sp(\mu_\ccal)} \arrow{d}{\Sp(\id \otimes g)} & \Sp(\ccal) \arrow{d}{\Sp(g)} \\
 \Sp(\acal \otimeshat \ccal') \arrow{r}{\Sp(\mu_{\ccal'})} & \Sp(\ccal')
 \end{tikzcd}
 \]
 where $g: \ccal \rightarrow \ccal'$ is a morphism of $\acal$-modules in $\Groth_\comp$. The desired claim now follows from   part (3) of theorem \ref{theorem modules over admissible}.
\end{proof}

We finish this section with two propositions concerning the compatibility of completed relative tensor products with colimits and limits.

\begin{proposition}\label{proposition colimits in Modcomp}
Let $\acal$ be an admissible commutative algebra in $\Groth_\comp$, and denote by $\Mod_{\acal}(\Groth_\comp)^\acnorm$ the wide subcategory of $\Mod_{\acal}(\Groth_\comp)^\acnorm$ on the almost compact morphisms.
\begin{enumerate}[\normalfont (1)]
\item The category $\Mod_{\acal}(\Groth_\comp)^\acnorm$ admits small colimits, which are preserved by the inclusion $\Mod_{\acal}(\Groth_\comp)^\acnorm \rightarrow \Mod_{\acal}(\Groth_\comp)$ and the forgetful functor to $\Groth_\comp$.
\item For every object $\ecal$ in $\Mod_{\acal}(\Groth_\comp)$ the composite functor
\[
\Mod_{\acal}(\Groth_\comp)^\acnorm \hookrightarrow \Mod_{\acal}(\Groth_\comp) \xrightarrow{\ecal \otimeshat_\acal -} \Mod_{\acal}(\Groth_\comp)
\]
preserves small colimits.
\end{enumerate}
\end{proposition}
\begin{proof}
Item (1) follows from proposition \ref{proposition completed tensor y colimits}, by \cite{HA} corollary 4.2.3.5. We now prove item (2). Let $\ccal_\alpha$ be a diagram in $\Mod_{\acal}(\Groth_\comp)^\acnorm$ with colimit $\ccal$. Applying \cite{HA} corollary 4.2.3.5 once more, we may reduce to showing that  for every $\dcal$ in $\Groth_\comp$ we have that $\dcal \otimeshat (\ecal \otimeshat_\acal \ccal)$ is the colimit of $ \dcal \otimeshat (\ecal \otimeshat_\acal \ccal_\alpha)$ in $\Groth_\comp$. Combining corollary \ref{coro face maps compact} with proposition \ref{proposition completed tensor y colimits} we reduce to showing that $| \dcal \otimeshat  \Barhat_{\acal}(\ecal, \ccal)_\bullet|$ is the colimit of $|\dcal \otimeshat \Barhat_{\acal}(\ecal, \ccal_\alpha)_\bullet|$ in $\Groth_\comp$. To do so it is enough to prove that $ \dcal \otimeshat  \Barhat_{\acal}(\ecal, \ccal)_n$ is the colimit of $\dcal \otimeshat \Barhat_{\acal}(\ecal, \ccal_\alpha)_n$ in $\Groth_\comp$ for every $n \geq 0$. This follows from proposition \ref{proposition completed tensor y colimits}.
\end{proof}

\begin{proposition}\label{proposition relative tensor y limits}
Let $\acal$ be an admissible commutative algebra in $\Groth_\comp$. Let $(\Mcal_\alpha)$ be a diagram of $\acal$-modules in $\Groth_\comp$  with left exact transitions. Let $\Ncal$ be an $\acal$-module in $\Groth_\comp$, and assume that $\Sp(\Ncal)$ is dualizable as an object of $\Mod_{\Sp(\acal)}(\Pr^L_{\St})$. Then the canonical map $(\lim \Mcal_\alpha) \otimeshat_{\acal} \Ncal  \rightarrow \lim (\mcal_\alpha \otimeshat_{\acal} \Ncal)$ is an equivalence.
\end{proposition}
\begin{proof}
Let $\Mcal = \lim \Mcal_\alpha$. Combining corollary  \ref{coro face maps compact} with proposition \ref{prop colimits y limits} we see that the simplicial objects $\Bar_{\acal}(\Mcal, \Ncal)_{\bullet}$ and $\Bar_{\acal}(\Mcal_\alpha, \Ncal)_\bullet$ admit geometric realizations in $\Groth$. Furthermore, combining propositions \ref{proposition tensor y compact y lex} and \ref{proposition relative tensor left exact} we see that the transition functors in the diagram  $(|\Bar_{\acal}(\Mcal_\alpha, \Ncal)_\bullet|)$  are left exact, and  so are the functors $|\Bar_{\acal}(\Mcal, \Ncal)_\bullet| \rightarrow |\Bar_{\acal}(\Mcal_\alpha, \Bcal)_\bullet|$. Since the completion functor $\Groth \rightarrow \Groth_\comp$ preserves limits of diagrams with left exact transitions, it will suffice to show that the canonical map
\[
|\Bar_{\acal}(\Mcal, \Ncal)_{\bullet}| \rightarrow \lim |\Bar_{\acal}(\Mcal_\alpha, \Bcal)_\bullet|
\]
(which we already know to be left exact) is an isomorphism. This will follow if we show that the functor
\[
\Sp(|\Bar_{\acal}(\Mcal, \Ncal)_{\bullet}|) \rightarrow  \Sp \lim (|\Bar_{\acal}(\Mcal_\alpha, \Bcal)_\bullet|)
\]
is an equivalence. We may identify the above with the canonical functor
\[
\Sp(\Mcal) \otimes_{\Sp(\acal)} \Sp(\Ncal) \rightarrow \lim  (\Sp(\Mcal_\alpha) \otimes_{\Sp(\acal)} \Sp(\Ncal)).
\]
Our claim now follows from the fact that $\Sp(\Ncal)$ is dualizable as a $\Sp(\acal)$-module.
\end{proof}


\ifx\inmain\undefined
\bibliographystyle{myamsalpha}
\bibliography{References}
\fi


\section{Affineness}\label{section affineness}

Let $X$ be a geometric stack. In \cite{SAG} chapter 10, Lurie defines a symmetric monoidal category $\twoQCoh^{\pst}(X)$ of quasicoherent sheaves of Grothendieck prestable categories on $X$. By definition, $\twoQCoh^{\pst}(X)$ is the limit of the symmetric monoidal categories $\Mod_{\Mod_R^\cn}(\Groth)$ over all morphisms $\Spec(R) \rightarrow X$ from an affine scheme into $X$. Inside $\twoQCoh^{\pst}(X)$ there is a full subcategory $\twoQCoh^{\pst}_\comp(X)$ consisting of those sheaves whose value on each map $\Spec(R) \rightarrow X$ is a complete Grothendieck prestable category. This may be equipped with an enhanced global sections functor
\[
\Gamma^\enh_\comp(X, -): \twoQCoh^{\pst}_\comp (X ) \rightarrow \Mod_{\QCoh(X)^\cn}(\Groth_\comp).
\]
The goal of this section is to prove that $\Gamma^{\enh}_\comp(X,-)$ is an equivalence whenever $X$ is quasi-compact and has quasi-affine diagonal (theorem \ref{theorem affineness} and corollary \ref{coro affineness with gammaenh}).

We begin this section in \ref{subsection admissibility qcoh} by showing that  the symmetric monoidal category $\QCoh(X)^\cn$ is an admissible algebra in $\Groth_\comp$, in the sense of definition \ref{definition admissible algebra}. This is  deduced as a consequence of a result that identifies the category $\QCoh(X\times Y)^\cn$ with the completion of $\QCoh(X)^\cn \otimes \QCoh(Y)^\cn$ for any geometric stack $Y$.

In \ref{subsection sheaves prestable} we review some elements of the theory of sheaves of Grothendieck prestable categories which will be used in the proof of affineness. The material here is for the most part a globalization of \ref{subsection groth}: we discuss limits, colimits, tensor products and completions in $\twoQCoh^{\pst}(X)$. We also include here a basic result concerning the pullback and pushforward functoriality for sheaves of complete Grothendieck prestable categories on geometric stacks.

Finally, in \ref{subsection affineness prestable} we state and give a proof of our affineness theorem. The admissibility of $\QCoh(X)^\cn$ plays a crucial role: combined with the material from \ref{subsection completed relative} it allows us in particular to obtain a left adjoint $\Phi_X$ to $\Gamma^\enh_\comp(X, -)$ (whose properties are used in the proof of  the full faithfulness of $\Gamma^\enh_\comp(X,-)$) and to deduce that the face maps of the completed Bar resolution of any object of $\Mod_{\QCoh(X)^\cn}(\Groth_\comp)$ are compact (which is used in our proof of the surjectivity of $\Gamma^\enh_\comp(X,-)$).


\subsection{Admissibility of \texorpdfstring{$\QCoh(X)^\cn$}{QCoh(X)cn}}\label{subsection admissibility qcoh}

The goal of this section is to prove the following:

\begin{theorem}\label{theorem admissibility qcoh}
Let $X$ be a quasi-compact geometric stack. Assume that the diagonal of $X$ is a quasi-compact quasi-separated relative algebraic space. Then:
\begin{enumerate}[\normalfont (1)]
\item  $\QCoh(X)^\cn$ is an admissible algebra in $\Groth_\comp$.
\item Assume that $X$ has affine diagonal. Then $\QCoh(X)^\cn$ and $\QCoh(X)^\heartsuit$ are almost rigid algebras in (the canonical $2$-categorical enhancement of) $\Groth_\comp$ and $\Pr^L$, respectively.
\end{enumerate}
\end{theorem}

We will deduce theorem \ref{theorem admissibility qcoh} from a tensor product formula for categories of quasicoherent sheaves.

\begin{notation}
Let $X$ and $Y$ be prestacks and let $p_X, p_Y$ be the projections from $X \times Y$ to $X$ and $Y$. We denote by $e_{X, Y}$ the composite map
\[
\QCoh(X)^\cn \otimes \QCoh(Y)^\cn \xrightarrow{p_X^* \otimes p_Y^*} \QCoh(X \times Y)^\cn \otimes \QCoh(X \times Y)^\cn \xrightarrow{\otimes} \QCoh(X \times Y)^\cn.
\]
In other words, $e_{X,Y}$ is the map arising from the canonical lax symmetric monoidal structure on the functor $\QCoh(-)^\cn$.
\end{notation}

\begin{theorem}\label{theo tensor product formulas}
Let $X$ be a quasi-compact geometric stack.  Assume that the diagonal of $X$ is a quasi-compact quasi-separated relative algebraic space. Then for every geometric stack $Y$ the functor $e_{X,Y}$  induces an equivalence  
\[
\QCoh(X)^\cn \otimeshat \QCoh(Y)^\cn  =  \QCoh(X \times Y)^\cn .
\] 
\end{theorem}

\begin{proof}[Proof of theorem \ref{theorem admissibility qcoh}]
We give the proof of (1); the proof of (2) is analogous. It suffices to show that $\QCoh(X)^\cn$ satisfies conditions (a) and (b) in remark \ref{remark conditions admissibility}. By theorem \ref{theo tensor product formulas}, the functor $\mu: \QCoh(X)^\cn \otimeshat \QCoh(X)^\cn \rightarrow \QCoh(X)^\cn$ may be identified with the functor of pullback along $\Delta: X \rightarrow X \times X$, so assertion (a) follows from the fact that $\Delta$ is a quasi-compact quasi-separated relative algebraic space, in light of \cite{SAG} corollaries 3.4.2.2 and 3.4.2.3. Similarly, the square in (b) is the image under $\QCoh(-)$ of the commutative square
\[
\begin{tikzcd}
X \times X \times X & \arrow{l}[swap]{\id \times \Delta} X \times X \\
X \times X \arrow{u}[swap]{\Delta \times \id} & X \arrow{l}[swap]{\Delta} \arrow{u}[swap]{\Delta}.
\end{tikzcd}
\]
The desired assertion follows from the fact that the above square is cartesian and its arrows are quasi-compact quasi-separated relative algebraic spaces.
\end{proof}

The remainder of this section is devoted to the proof of theorem \ref{theo tensor product formulas}.

\begin{lemma}\label{lemma comonadic from left exactness}
Let $f: \Ccal \rightarrow \Dcal$ be a colimit preserving functor between complete Grothendieck prestable categories. If $f$ is left exact and conservative then $f$ is comonadic.
\end{lemma}
\begin{proof}
By the monadicity theorem, it will suffice to prove that if $X^\bullet$ is an $f$-split cosimplicial diagram in $\ccal$ then $f$ preserves the totalization of $X^\bullet$. Since $\dcal$ is complete, it is enough to show that $\tau_{\leq n} \circ f$ preserves the totalization of $X^\bullet$ for all $n \geq 0$. Using the fact that left exact functors between $(n+1,1)$-categories preserve totalizations we may further reduce to showing that $\tau_{\leq n}: \ccal \rightarrow \ccal_{\leq n}$ preserves the totalization of $X^\bullet$ for all $n$. Since $\ccal$ is complete, it suffices to prove that for all $n \geq 1$ the truncation $\tau_{\leq n}$ preserves the totalization of $\tau_{\leq n+1}(X^\bullet)$. Since $f$ is left exact and conservative it will suffice to show that the truncation $\tau_{\leq n}$ preserves the totalization of $\tau_{\leq n+1}(f(X^\bullet))$. This follows from the fact that $X^\bullet$ is $f$-split.
\end{proof}

\begin{lemma}\label{lemma tensor y amplitude}
Let $\ccal, \dcal$ be Grothendieck prestable categories and let $h: \Sp(\ccal) \rightarrow \Sp(\dcal)$ be a left t-exact colimit preserving functor. Then for every Grothendieck prestable category $\Ecal$ the functor
\[
 h \otimes \id: \Sp(\ccal) \otimes \Sp(\ecal) \rightarrow \Sp(\dcal) \otimes \Sp(\ecal)
\]
is left t-exact.
\end{lemma}
\begin{proof}
By \cite{SAG} theorem C.2.4.1 we may pick a connective ring spectrum $A$ and a left exact localization functor $p: \LMod^\cn_A \rightarrow \Ecal$. Consider the commutative square
 \[
 \begin{tikzcd}
 \Sp(\ccal) \otimes \Sp(\LMod_B^\cn)\arrow{d}{\id \otimes \Sp(q)}  \arrow{r}{h \otimes \id} & \Sp(\Dcal) \otimes \Sp(\LMod_B^\cn) \arrow{d}{\id \otimes \Sp(q)} \\
 \Sp(\ccal) \otimes \Sp(\Ecal) \arrow{r}{h \otimes \id} & \Sp(\dcal) \otimes \Sp(\ecal).
 \end{tikzcd}
 \]
Our goal is to show that the bottom horizontal arrow is left t-exact. Since the right vertical arrow is t-exact and the left vertical arrow is surjective on hearts we may reduce to showing that the top horizontal arrow is left t-exact. Let $G: \LMod^\cn_B \rightarrow \Sp^\cn$ be the forgetful functor. Then  we have a commutative square
\[
\begin{tikzcd}
 \Sp(\ccal) \otimes \Sp(\LMod_B^\cn)\arrow{d}{\id \otimes \Sp(G)}  \arrow{r}{h \otimes \id} & \Sp(\Dcal) \otimes \Sp(\LMod_B^\cn) \arrow{d}{\id \otimes \Sp(G)}  \\
 \Sp(\ccal) \otimes \Sp(\Sp^\cn) \arrow{r}{h \otimes \id} & \Sp(\dcal) \otimes \Sp(\Sp^\cn).
\end{tikzcd}
\]
Here the vertical arrows are t-exact, and the bottom arrow is equivalent to $h$ so it is left t-exact. The lemma now follows from the fact that the right vertical arrow is conservative, since it may be identified with the forgetful functor $\LMod_B(\Sp(\Dcal)) \rightarrow \Sp(\Dcal)$.
\end{proof}

\begin{lemma}\label{lemma tensor comonadics}
Let $f: \ccal \rightarrow \dcal$ be a colimit preserving functor between complete Grothendieck prestable categories. Assume that $f$ is compact, left exact, and conservative. Then for every complete Grothendieck prestable category $\Ecal$ the functor $f \otimeshat \id : \ccal \otimeshat \ecal \rightarrow \dcal \otimeshat \ecal$ is left exact and conservative.
\end{lemma}
\begin{proof}
The left exactness of $f \otimeshat \id$ follows directly from the left exactness of $f$, by a combination of propositions \ref{proposition tensor y compact y lex} and \ref{proposition lex y almost compact completion}. It remains to address conservativity. Since $f \otimeshat \id$ is left exact and its source is complete, it is enough to show that if $M$ is a $0$-truncated object in $\ccal \otimeshat \dcal$ such that $(f\otimeshat \id)(M) = 0$ then $M = 0$.  Since completion induces an equivalence on $0$-truncated objects, it will suffice to show that if $N$ is a $0$-truncated object of $\ccal \otimes \dcal$ such that $(f \otimes \id)(N) = 0$ then $N = 0$. 

Consider the following commutative square:
\[
\begin{tikzcd}[column sep = large]
\ccal \otimes \ecal \arrow{d}{} \arrow{r}{f \otimes \id} & \dcal \otimes \ecal \arrow{d}{} \\
\Sp(\ccal) \otimes \Sp(\ecal) \arrow{r}{\Sp(f) \otimes \id} & \Sp(\dcal) \otimes \Sp(\ecal)
\end{tikzcd}
\]
Here the bottom row is the stabilization of the top row. We equip the categories in the bottom row with the induced t-structures so that the categories in the top row get identified with the connective subcategories. Let $g: \Sp(\dcal) \rightarrow \Sp(\ccal)$ be the right adjoint to $\Sp(f)$. Let $\eta: \id \rightarrow g \circ \Sp(f)$ be the unit and denote by $h$ the cofiber of $\eta$. We have an exact sequence
\[
\begin{tikzcd}
N \rightarrow (g \otimes \id) ( (\Sp(f) \otimes \id) ( N) ) \rightarrow (h \otimes \id )(N) 
\end{tikzcd}
\]
in $\Sp(\ccal) \otimes \Sp(\ecal)$. Since $(f \otimes \id)(N) = 0$, the middle term in the above sequence is $0$, so we have that $(h \otimes \id)(N) = \Sigma N$. To prove that $N = 0$ it will suffice to show that $h \otimes \id$ is left t-exact. By lemma \ref{lemma tensor y amplitude} it is enough to prove that $h$ is left t-exact. To prove this we have to show that for every object $Q$ in $\Sp(\ccal)^\heartsuit$ the cofiber of $\eta(Q)$ is $0$-truncated. Since $\Sp(f)$ is t-exact, we have that $g(\Sp(f)(Q))$ is $0$-truncated, so we may reduce to showing that $\eta(Q)$ induces a monomorphism on $H_0$. Let $g': \Dcal \rightarrow \Ccal$ be the right adjoint to $f$, and denote by $\eta': \id \rightarrow g' \circ f$ the unit of the adjunction. Then $H_0(\eta(Q)) = \eta'(Q)$, so it is enough to prove that the latter is a monomorphism. Since $f$ is left exact and conservative we may reduce to showing that $f(\eta'(Q)): f(Q) \rightarrow f(g'(f(Q))$ is a monomorphism. Indeed, this map admits a retraction induced by the counit $f \circ g' \rightarrow \id$.
\end{proof}

\begin{lemma}\label{lemma comonadicity spectral}
Let $f: \ccal \rightarrow \dcal$ be a colimit preserving functor between Grothendieck prestable categories. If $f$ is left exact and comonadic then $\Sp(f): \Sp(\ccal) \rightarrow \Sp(\dcal)$ is comonadic.
\end{lemma}
\begin{proof}
We will verify the conditions of the monadicity theorem. We first show that $\Sp(f)$ is conservative. Since $\Sp(f)$ is an exact functor between stable categories, it is enough to show that if $X$ is an object of $\Sp(\ccal)$ such that $\Sp(f)(X) = 0$ then $X = 0$. Using the fact that the t-structure on $\Sp(\ccal)$ is right complete we reduce to showing that $\tau_{\geq n}X = 0$ for all integers $n$. The left exactness of $f$ implies that $\Sp(f)(\tau_{\geq n}(X) ) = \tau_{\geq n} \Sp(f)(X) = 0$. Hence $f(\Omega^n \tau_{\geq n} X )= \Omega^n \Sp(f)(\tau_{\geq n} X) = 0$. Using the fact that $f$ is conservative we deduce that $\Omega^n \tau_{\geq n} X = 0$ and hence $\tau_{\geq n} X = 0$, as desired.

Assume now given a $\Sp(f)$-split cosimplicial object $X^\bullet$ in $\Sp(\ccal)$. We wish to show that $\Sp(f)$ preserves the totalization of $X^\bullet$. Since the t-structure on $\Sp(\dcal)$ is right complete we may reduce to showing that $\tau_{\geq n} \circ \Sp(f)$ preserves the totalization of $X^\bullet$ for all integers $n$. Since $f$ is left exact, this agrees with $\Sp(f) \circ \tau_{\geq n}$, so we may reduce to showing that $\Sp(f)$ preserves the totalization of $\Sp(f)$-split cosimplicial objects of $\Sp(\ccal)_{\geq n}$. This follows from the fact that the functor $\Sp(\ccal)_{\geq n} \rightarrow \Sp(\dcal)_{\geq n}$ induced from $\Sp(f)$ is equivalent to $f$.
\end{proof}

\begin{lemma}\label{lemma base change along flat quasi affines}
Let 
 \[
 \begin{tikzcd}
 X' \arrow{d}{f'} \arrow{r}{g'} & X \arrow{d}{f} \\
 Y' \arrow{r}{g} & Y
\end{tikzcd} 
 \]
 be a cartesian square  of quasi-compact geometric stacks whose maps are quasi-compact quasi-separated relative algebraic spaces. Then for every complete Grothendieck prestable category $\ccal$ the commutative square
 \[
 \begin{tikzcd}[column sep =large]
 \Sp(\QCoh( X')^\cn  \otimeshat \ccal) & \Sp(\QCoh( X )^\cn \otimeshat \ccal) \arrow{l}[swap]{\Sp(g'^* \otimeshat \id)}  \\
 \Sp(\QCoh( Y')^\cn \otimeshat \ccal) \arrow{u}[swap]{\Sp(f'^* \otimeshat \id)}& \Sp(\QCoh( Y)^\cn  \otimeshat \ccal) \arrow{l}[swap]{\Sp(g^* \otimeshat \id)}\arrow{u}[swap]{\Sp(f^* \otimeshat \id)}
 \end{tikzcd} 
 \]
 is vertically right adjointable
\end{lemma}
\begin{proof}
By remark \ref{remark adjointability in groth overline} it suffices to show that the  commutative square in $\Groth^\st_\comp$
\[
 \begin{tikzcd}[column sep =large]
 (\Sp(\QCoh( X')^\cn  \otimeshat \ccal),\QCoh( X')^\cn  \otimeshat \ccal) &  (\Sp(\QCoh( X )^\cn  \otimeshat \ccal),\QCoh( X )^\cn  \otimeshat \ccal) \arrow{l}[swap]{\Sp(g'^* \otimeshat \id)}  \\
  (\Sp(\QCoh( Y')^\cn  \otimeshat \ccal),\QCoh( Y')^\cn  \otimeshat \ccal) \arrow{u}[swap]{\Sp(f'^* \otimeshat \id)}& (\Sp(\QCoh( Y)^\cn  \otimeshat \ccal),\QCoh( Y)^\cn  \otimeshat \ccal) \arrow{l}[swap]{\Sp(g^* \otimeshat \id)}\arrow{u}[swap]{\Sp(f^* \otimeshat \id)}
 \end{tikzcd} 
\]
is vertically right adjointable. The above is equivalent to the completed tensor product of the object $(\Sp(\ccal), \ccal)$ with the following commutative square:
\[
 \begin{tikzcd}
 (\Sp(\QCoh(X')^\cn), \QCoh(X')^\cn)   &  (\Sp(\QCoh(X )^\cn), \QCoh(X )^\cn)  \arrow{l}[swap]{\Sp(g'^*)}  \\
  (\Sp(\QCoh(Y')^\cn), \QCoh(Y')^\cn)   \arrow{u}[swap]{\Sp(f'^*)}&  (\Sp(\QCoh(Y)^\cn), \QCoh(Y)^\cn)   \arrow{l}[swap]{\Sp(g^*)}\arrow{u}[swap]{\Sp(f^*)} 
 \end{tikzcd} 
\]
It is enough to show that the above square is vertically right adjointable. By remark \ref{remark adjointability in groth overline}, it suffices to prove that the square 
\[
 \begin{tikzcd}
 \QCoh( X')    & \QCoh( X )   \arrow{l}[swap]{g'^*}  \\
 \QCoh( Y')   \arrow{u}[swap]{f'^*}& \QCoh( Y) \arrow{l}[swap]{g^*}\arrow{u}[swap]{f^*} 
 \end{tikzcd}
\]
is vertically right adjointable, and the right adjoints to the vertical arrows are colimit preserving and right t-exact up to a shift. This follows directly from our assumptions, by a combination of \cite{SAG} corollaries 3.4.2.2 and 3.4.2.3.
\end{proof}

\begin{lemma}\label{lemma global sections y tensoring}
Let $X$ be a quasi-compact geometric stack, and assume that the diagonal of $X$ is a quasi-compact quasi-separated relative algebraic space. Let $p: U \rightarrow X$ be a faithfully flat map with $U$ affine and denote by $U_\bullet$ the \v{C}ech nerve of $f$. Then for every complete Grothendieck prestable category $\ccal$ the canonical map
\[
\QCoh (X)^\cn \otimeshat \ccal \rightarrow \Tot(\QCoh (U_\bullet)^\cn \otimeshat \ccal)
\]
is an equivalence.
\end{lemma}
\begin{proof}
We note that the map in the statement is a left exact functor between Grothendieck prestable categories. Consequently, to show that it is an equivalence it will suffice to show that the induced map
\[
\Sp(\QCoh (X)^\cn \otimeshat \ccal) \rightarrow \Sp( \Tot(\QCoh (U_\bullet)^\cn \otimeshat \ccal)) = \Tot (\Sp( \QCoh (U_\bullet)^\cn \otimeshat \ccal))
\]
is an equivalence. Combining lemmas  \ref{lemma comonadic from left exactness}, \ref{lemma tensor comonadics} and \ref{lemma comonadicity spectral} we see that the functor 
\[
\Sp(p^* \otimeshat \id): \Sp(\QCoh (X)^\cn \otimeshat \ccal) \rightarrow \Sp(\QCoh (U)^\cn \otimeshat \ccal)
\]
 is comonadic.  We may thus reduce to showing that the augmented cosimplicial category $\Sp(\QCoh(X)^\cn \otimeshat \ccal) \rightarrow \Sp(\QCoh (U_\bullet)^\cn \otimeshat \ccal)$ satisfies (the dual to) the Beck-Chevalley condition from \cite{HA} corollary 4.7.5.3. This follows from lemma \ref{lemma base change along flat quasi affines}.
\end{proof}

\begin{proof}[Proof of theorem \ref{theo tensor product formulas}]
Assume first that $X$ is affine. We claim that in this case $e_{X, Y}$ is an isomorphism for all $Y$. Since $X$ is affine we have that $\QCoh(X)^\cn$ is a self dual object of $\Mod_{\Sp^\cn}(\Pr^L)$ and therefore tensoring with $\QCoh(X)^\cn$ preserves limits of diagrams in $\Groth$ with left exact transitions. From this we deduce that the source of $e_{X, Y}$ preserves coproducts and geometric realizations of flat groupoids in the $Y$ variable. The same assertions hold for the target of $e_{X,Y}$, so by induction we may reduce to the case when $Y$ is also affine, in which case the assertion is clear.

We now address the general case. Let $p: U \rightarrow X$ be a faithfully flat map with $U$ affine and denote by $U_\bullet$ the \v{C}ech nerve of $p$. Assume that the theorem is known to hold for $U_n$ for all $n$. Consider the following commutative diagram:
  \[
\begin{tikzcd}[column sep = large]
\Tot (\QCoh(U_\bullet)^\cn \otimeshat \QCoh(Y)^\cn ) \arrow{r}{\Tot \widehat{e}_{U_\bullet, Y} } & \Tot \QCoh (U_\bullet \times Y )^\cn \\
\QCoh (X)^\cn \otimeshat \QCoh (Y)^\cn \arrow{r}{\widehat{e}_{X,Y}} \arrow{u}{} & \QCoh  (X \times Y)^\cn \arrow{u}{}
\end{tikzcd}
\]
Here the right vertical arrow is an equivalence, and the top vertical arrow is also an equivalence by our assumption on $U_\bullet$. The theorem then holds for $X$ since the left vertical arrow is an equivalence (lemma \ref{lemma global sections y tensoring}).

Since we have already proven that the theorem holds for affine $X$, the above implies that the theorem holds whenever $X$ has affine diagonal, and in particular if $X$ is a quasi-affine scheme. Another iteration of the same reasoning then shows that the theorem holds whenever $X$ has quasi-affine diagonal, and in particular if $X$ is a quasi-compact quasi-separated algebraic space (see \cite{SAG} proposition 3.4.1.3). The result now follows from yet another iteration of the same reasoning.
\end{proof}
 
 
\subsection{Sheaves of Grothendieck prestable categories}\label{subsection sheaves prestable}
 
 We now give an overview of the theory of  sheaves of Grothendieck prestable categories, as introduced in \cite{SAG} chapter 10.
 
\begin{notation}
 For each connective commutative ring spectrum $R$ we let 
 \[
 \Groth_R = \Mod_{\Mod_R^\cn}(\Groth).
  \]
  Objects of $\Groth_R$ are called $R$-linear Grothendieck prestable categories.  As discussed in \cite{SAG} proposition D.2.2.1, if $R$ is a connective commutative ring spectrum then $\Groth_R$ is  closed under tensor products inside $\Mod_{\Mod_R^\cn}(\Pr^L)$. In particular, $\Groth_R$ inherits a symmetric monoidal structure from $\Mod_{\Mod_R^\cn}(\Pr^L)$. We denote by 
  \[
  - \otimes_R - : \Groth_R \times \Groth_R \rightarrow \Groth_R
  \]
  the corresponding tensor product functor. Furthermore for each morphism of connective commutative ring spectra $R \rightarrow S$ we have an extension of scalars functor which will be denoted by $- \otimes_R S : \Groth_R \rightarrow \Groth_S$.
  
 The assignment $R \mapsto \Groth_R$ assembles into a functor $\CAlg^\cn \rightarrow \CAlg(\cathat)$ which satisfies \'etale descent (\cite{SAG} theorem D.4.1.2). Let 
 \[
 \twoQCoh^\pst : \PreStk^\op \rightarrow \CAlg(\cathat)
 \]
be  its right Kan extension along the inclusion $\CAlg^\cn = \Aff^\op \rightarrow \PreStk^\op$. For each prestack $X$ we call $\twoQCoh^\pst(X)$ the category of quasicoherent sheaves of Grothendieck prestable categories on $X$. For each map $f: X \rightarrow Y$ we denote by $f^*: \twoQCoh^\pst(Y) \rightarrow \twoQCoh^\pst(X)$ the induced pullback functor.
\end{notation}
 
The notion of quasicoherent sheaf of Grothendieck prestable categories globalizes the notion of $R$-linear Grothendieck prestable category. The classes of left exact and (almost) compact morphisms also admit a globalization.

\begin{proposition}
Let $R \rightarrow S$ be a morphism of connective commutative ring spectra and let $f: \ccal \rightarrow \dcal$ be a morphism in $\Groth_R$. If $f$ is left exact (resp. compact, resp. almost compact) then $f \otimes_R S : \ccal \otimes_R S \rightarrow \dcal \otimes_R S$ is left exact (resp. compact, resp. almost compact).
\end{proposition}
\begin{proof}
The cases of left exactness and compactness are given by \cite{SAG} propositions D.5.2.1 and D.5.2.2. The proof for almost compactness is analogous to the proof for compactness.
\end{proof}
 
\begin{definition}
Let $X$ be a prestack and let $f: \ccal \rightarrow \dcal$ be a morphism in $\twoQCoh^\pst(X)$. We say that $f$ is left exact (resp. compact, resp. almost compact) if for every connective commutative ring spectrum $R$ and every map $\eta: \Spec(R) \rightarrow X$ the functor of Grothendieck prestable categories underlying the map $\eta^*f: \eta^* \ccal \rightarrow \eta^*\dcal$  is left exact (resp. compact, resp. almost compact).
\end{definition}
 
\begin{notation}
Let $X$ be a prestack. We denote by $\twoQCoh^{\pst, \lex}(X)$ (resp. $\twoQCoh^{\pst, \cnorm}(X)$, resp. $\twoQCoh^{\pst, \acnorm}(X)$) the wide subcategory of $\twoQCoh^\pst(X)$ on the left exact (resp. complete, resp. almost complete) morphisms.  
\end{notation}
 
\begin{proposition}\label{prop tensor left exact y compact global}
Let $X$ be a prestack. Let $f: \ccal \rightarrow \ccal'$ be a morphism in $\twoQCoh^\pst(X)$, and let $\dcal$ be an object in $\twoQCoh^\pst(X)$. If $f$ is left exact (resp. compact) then  $f \otimes \id : \ccal \otimes \dcal \rightarrow \ccal' \otimes \dcal$ is left exact (resp. compact).
\end{proposition}
\begin{proof}
It suffices to address the case when $X = \Spec(R)$ is an affine scheme. The case of left exactness follows from \cite{Fully} proposition 2.4.20. Assume now that $f$ is compact. Then $ \Sp(f \otimes_R \id)$ is equivalent to 
 \[
 \Sp(f) \otimes_R \id : \Sp(\ccal) \otimes_R \Sp(\dcal) \rightarrow \Sp(\ccal') \otimes_R \Sp(\dcal).
 \]
 Since $\Sp(f)$ admits a colimit preserving right adjoint we have that the above functor also admits a colimit preserving right adjoint. Hence $f \otimes_R \id$ is compact, as desired.
\end{proof}
 
\begin{proposition}\label{proposition limits y colimits globalized}
  Let $X$ be a prestack.
 \begin{enumerate}[\normalfont (1)]
 \item The category  $\twoQCoh^{\pst, \lex}(X)$ admits small limits, which are preserved by the inclusion into $\twoQCoh^\pst(X)$. Furthermore, for every morphism of prestacks $f: X \rightarrow Y$  the pullback functor $f^*: \twoQCoh^{\pst, \lex}(Y) \rightarrow \twoQCoh^{\pst, \lex}(X)$ preserves small limits.
 \item The category $\twoQCoh^{\pst, \cnorm}(X)$ and $\twoQCoh^{\pst, \acnorm}(X)$ admit small colimits, which are preserved by the inclusion into $\twoQCoh^\pst(X)$. Furthermore, for every morphism $f: X \rightarrow Y$ the pullback functor $f^*: \twoQCoh^{\pst, \acnorm}(Y) \rightarrow \twoQCoh^{\pst, \acnorm}(X)$ preserves small colimits.
 \end{enumerate}
 \end{proposition}
 \begin{proof}
 For each morphism $R \rightarrow S$ of connective commutative ring spectra, we have that $\Mod_S^\cn$ is a self dual object of $\Groth_R$, and consequently the functor $- \otimes_R S : \Groth_R \rightarrow \Groth_S$ preserves all limits and colimits that exist on the source. The proposition now follows from a combination of propositions \ref{prop colimits y limits} and \ref{proposition tensor products and colimits}.
\end{proof}

\begin{proposition}
Let $X$ be a prestack and let $\ccal$ be an object in $\twoQCoh^{\pst}(X)$. Then the composite functor
\[
\twoQCoh^{\pst, \acnorm}(X) \hookrightarrow \twoQCoh^{\pst}(X) \xrightarrow{\ccal \otimes - } \twoQCoh^{\pst}(X)
\]
preserves small colimits.
\end{proposition}
\begin{proof}
By proposition \ref{proposition limits y colimits globalized} it suffices to address the case when $X = \Spec(R)$ is an affine scheme. In this case the task is to show that the functor $- \otimes_R \ccal : \Groth_R \rightarrow \Groth$ preserves colimits of diagrams with almost compact transitions. We may write this functor as the geometric realization of $\Bar_{\Mod^\cn_R}(-, \ccal)_\bullet$. We may thus reduce to showing that for each $n$ the functor $\Bar_{\Mod^\cn_R}(- ,\ccal)_n: \Groth_R \rightarrow \Groth$ preserves colimits of diagrams with almost compact transitions. This follows from proposition  \ref{proposition tensor products and colimits}.
\end{proof}

There is also a globalization of the notion of complete Grothendieck prestable category.

\begin{proposition}[\cite{SAG} proposition D.5.1.3]
Let $R \rightarrow S$ be a morphism of connective commutative ring spectra and let $\ccal$ be a complete $R$-linear Grothendieck prestable category. Then $\ccal \otimes_R S$ is complete.
\end{proposition}

\begin{definition}
Let $X$ be a prestack. We say that an object $\ccal$ in $\twoQCoh^\pst(X)$ is complete if for every connective commutative ring spectrum $R$ and every map $\eta: \Spec(R) \rightarrow X$ the Grothendieck prestable category underlying $\eta^*\ccal$ is complete.
\end{definition}

\begin{notation}
Let $X$ be a prestack. We denote by $\twoQCoh^\pst_\comp(X)$ the full subcategory of $\twoQCoh^\pst(X)$ on the complete objects.
\end{notation}

The procedure of completion may also be globalized:

\begin{proposition}[\cite{SAG} proposition 10.3.1.11]\label{proposition completion globalized}
Let $X$ be a prestack. Then the inclusion $\twoQCoh^\pst_\comp(X) \rightarrow \twoQCoh^\pst(X)$ admits a left adjoint, which we denote by $\ccal \mapsto \widehat{\ccal}$. Furthermore, if $\ccal$ is an object of $\twoQCoh^\pst(X)$ then for every connective commutative ring spectrum $R$ and every map $\eta: \Spec(R) \rightarrow X$ the induced map of Grothendieck prestable categories $\eta^*\ccal \rightarrow \eta^*\widehat{\ccal}$ presents $\eta^*\widehat{\ccal}$ as the completion of $\eta^*\ccal$.
\end{proposition}

\begin{remark}\label{remark pullbacks on twoqcohcomp}
It follows from proposition \ref{proposition completion globalized} that for every prestack $X$ we have a symmetric monoidal localization functor $\widehat{(-)}: \twoQCoh^\pst(X) \rightarrow \twoQCoh^\pst_\comp(X)$, which commutes with base change. Consequently, the assignment $X \mapsto \twoQCoh^\pst_\comp(X)$ gives rise to a functor
\[
\twoQCoh^\pst_\comp : \PreStk^\op \rightarrow \CAlg(\cathat).
\] 
The above is limit preserving so it is determined by its restriction to $\CAlg^\cn$. Unwinding the definitions, this recovers the functor  $\CAlg^\cn \rightarrow \CAlg(\cathat)$ that sends each connective commutative ring spectrum $R$ to $\Mod_{\Mod_R^\cn}(\Groth_\comp)$, with the symmetric monoidal structure and change of base functoriality arising from corollary \ref{corollary tensor product over admissible} by virtue of the fact that $\Mod_R^\cn$ is an admissible commutative algebra in $\Groth_\comp$.
\end{remark}

We will need the following:
\begin{theorem}[ \cite{SAG} theorem D.6.8.1]
The assignment $R \mapsto \Mod_{\Mod_R^\cn}(\Groth_{\comp})$ is a sheaf for the fpqc topology on $\Aff$.
\end{theorem}

We finish with a basic result regarding the pushforward functoriality of $\twoQCoh^\pst_\comp$ on geometric stacks.

\begin{proposition}\label{proposition pushforward functoriality}
Let $f: X \rightarrow Y$ be a morphism of geometric stacks. Then the functor $f^*: \twoQCoh^\pst_{\comp}(Y) \rightarrow \twoQCoh^\pst_{\comp}(X)$ admits a right adjoint $f_*: \twoQCoh^\pst_{\comp}(X) \rightarrow \twoQCoh^\pst_{\comp}(Y)$. Furthermore, for every cartesian square of geometric stacks
\[
\begin{tikzcd}
X' \arrow{d}{f'} \arrow{r}{g'} & X \arrow{d}{f} \\
Y' \arrow{r}{g} & Y
\end{tikzcd}
\]
the induced commutative square of categories
\[
\begin{tikzcd}
\twoQCoh^\pst_{\comp}(X' )& \twoQCoh^\pst_{\comp}(X ) \arrow{l}[swap]{g'^*} \\
\twoQCoh^\pst_{\comp}(Y' ) \arrow{u}[swap]{f'^*} & \arrow{u}[swap]{f^*} \arrow{l}[swap]{g^*}  \twoQCoh^\pst_{\comp}(Y )
\end{tikzcd}
\]
is vertically right adjointable. 
\end{proposition}

\begin{proof}
Assume first that $X, X', Y, Y'$ are affine, so that the first commutative square in the statement is obtained by taking $\Spec$ of a pushout square
\[
\begin{tikzcd}
S' & S  \arrow{l}{} \\
R' \arrow{u}{} & \arrow{l}{} R \arrow{u}{}
\end{tikzcd}
\]
of connective commutative ring spectra.  The functor $f^*$ is given by the restriction of $- \otimes_R S : \Groth_R \rightarrow \Groth_S $ to the full subcategories on the complete objects. Consequently, it admits a right adjoint which is the restriction of the forgetful functor $\Groth_S \rightarrow \Groth_R$. Similarly, $f'^*$ admits a right adjoint. The vertical right adjointability of the square 
\[
\begin{tikzcd}
\Mod_{\Mod^\cn_{S'}}(\Groth_\comp) & \Mod_{\Mod^\cn_{S}}(\Groth_\comp)  \arrow{l}{} \\
\Mod_{\Mod^\cn_{R'}}(\Groth_\comp) \arrow{u}{} & \arrow{u}{} \arrow{l}{}\Mod_{\Mod^\cn_{R}}(\Groth_\comp) 
\end{tikzcd}
\]
would then follow if we show that the square
\[
\begin{tikzcd}
\Mod_{\Mod^\cn_{S'}}(\Pr^L) & \Mod_{\Mod^\cn_{S}}(\Pr^L)  \arrow{l}{} \\
\Mod_{\Mod^\cn_{R'}}(\Pr^L) \arrow{u}{} & \arrow{u}{} \arrow{l}{}\Mod_{\Mod^\cn_{R}}(\Pr^L) 
\end{tikzcd}
\]
is vertically right adjointable. This follows from the fact that the square
\[
\begin{tikzcd}
\Mod^\cn_{S'} & \Mod^\cn_S \arrow{l}{} \\
\Mod^\cn_{R'} \arrow{u}{} & \Mod^\cn_R \arrow{u}{} \arrow{l}{}
\end{tikzcd}
\]
is a pushout of commutative algebras in $\Pr^L$.

We now establish the general case. Applying lemma \ref{lemma how to check vert adjointable} we may reduce to the case when $Y$ and $Y'$ are affine. Write $X$ as the colimit in $\Stk$ of a diagram of affine schemes $X_\alpha = \Spec(S_\alpha)$ and flat transitions. For each $\alpha$ denote by $g_\alpha: X'_\alpha \rightarrow X_\alpha$ the base change of $g$, and let $p_\alpha: X_\alpha \rightarrow X$ and $p'_\alpha: X'_\alpha \rightarrow X'$ be the projections.

Let $\ccal$ be an object of $\twoQCoh^{\pst}_\comp(X)$. To show that $f_*$ is defined at $\ccal$ it will suffice to prove that the diagram $(f \circ p_\alpha)_* p_\alpha^* \ccal$ admits a limit in $\twoQCoh^{\pst}_\comp(Y)$. To do so it will be enough to show that for each transition $p_{\alpha, \beta}: X_\alpha \rightarrow X_{\beta}$ the morphism
\[
(f \circ p_{\beta})_* p_{\beta}^* \ccal \rightarrow (f \circ p_\alpha)_* p_\alpha^* \ccal   = (f \circ p_{\beta})_* (p_{\alpha, \beta})_* ( p_{\alpha, \beta}) ^*  p_{\beta}^* \ccal
\]
induced from the unit $\id \rightarrow (p_{\alpha, \beta})_* (p_{\alpha, \beta})^* $, is left exact. To do so it suffices to show that for every $\ccal'$ in $\Groth_{S_\beta}$ the unit map $\ccal' \rightarrow \ccal' \otimes_{S_{\beta}} S_\alpha$ is left exact. This follows from proposition \ref{prop tensor left exact y compact global}, since the pullback map $\Mod_{S_{\beta}}^\cn \rightarrow \Mod_{S_\alpha}^\cn$ is left exact.

We now show that the map $g^*f_*\ccal \rightarrow f'_* g'^* \ccal$ is an equivalence. Using the above characterization of $f_*$ (and the similar characterization of $f'_*$) we may reduce to showing that $g^*$ preserves the limit of the diagram $(f \circ p_\alpha)_* p_\alpha^* \ccal$. We claim that in fact $g^*$ preserves all limits that exist in $\twoQCoh^{\pst}_\comp(Y)$. Write $Y = \Spec(R)$ and $Y' = \Spec(R')$. Then $g^*$ is obtained by restriction of the functor
\[
- \otimes_R R' : \Mod_{\Mod_R^\cn}(\Pr^L) \rightarrow \Mod_{\Mod_{R'}^\cn}(\Pr^L)
\]
to the full subcategories on the complete objects. The desired claim follows from the fact that the above functor is right adjoint to the forgetful functor (since $\Mod_{R'}^\cn$ is self dual as an $\Mod_{R}^\cn$-module).
\end{proof}

 
\subsection{\texorpdfstring{$\twoQCoh_\comp^{\pst}$}{2QCohPstcomp}-affineness}\label{subsection affineness prestable}

We now arrive at the main result of this section, which allows us to recover $\twoQCoh^\pst_\comp(X)$ from $\QCoh(X)^\cn$ whenever $X$ is a quasi-compact geometric stack with quasi-affine diagonal.

\begin{construction}\label{construction Phi}
Let $\Gcal$ be the full subcategory of $\PreStk$ on the quasi-compact geometric stacks $X$ such that the diagonal of $X$ is a quasi-compact quasi-separated relative algebraic space. It follows from  theorem \ref{theorem admissibility qcoh} (in light of remark \ref{remark functoriality modcomp}) that the assignment $X \mapsto \Mod_{\QCoh(X)^\cn}(\Groth_\comp)$ gives rise to a functor 
\[
\Mod_{\QCoh(-)^\cn}(\Groth_\comp): \Gcal^\op \rightarrow \CAlg(\cathat).
\] 
The right Kan extension of $\Mod_{\QCoh(-)^\cn}(\Groth_\comp)|_{\Aff^\op}$ along the inclusion $\Aff^\op \rightarrow \Gcal^\op$ recovers the restriction to $\mathcal{G}^\op$ of the functor $\twoQCoh^\pst_\comp$ (see remark \ref{remark pullbacks on twoqcohcomp}). It follows that for each object $X$ of $\Gcal$ we have a symmetric monoidal functor
\[
\Phi_X : \Mod_{\QCoh(X)^\cn}(\Groth_\comp) \rightarrow \twoQCoh^\pst_\comp(X).
\]
\end{construction}

\begin{theorem}\label{theorem affineness}
Let $X$ be a quasi-compact geometric stack with quasi-affine diagonal. Then the functor 
\[
\Phi_X : \Mod_{\QCoh(X)^\cn}(\Groth_\comp) \rightarrow \twoQCoh^\pst_\comp(X) 
\]
from construction \ref{construction Phi} is an equivalence.
\end{theorem}

\begin{remark}\label{remark generality affineness}
Theorem \ref{theorem affineness} continues to hold  with the same proof  provided that $X$ is a quasi-compact geometric stack, the diagonal of $X$ is a quasi-compact quasi-separated relative algebraic space, and the following condition is satisfied:
\begin{enumerate}[\normalfont $(\ast)$]
\item Let $U \rightarrow X$ and $V \rightarrow X$ be a pair of morphisms with $U, V$ affine. Then $\QCoh(U \times_X V)^\cn$ is generated under colimits and extensions by $\Ocal_{U \times_X V}$.
\end{enumerate}
\end{remark}

Before going into the proof of theorem \ref{theorem affineness}, we describe a slight reformulation.

\begin{notation}\label{notation gamma enh}
Let $X$ be a geometric stack. We denote by
\[
\Gamma_\comp(X, -): \twoQCoh^\pst_\comp(X) \rightarrow \twoQCoh^\pst_\comp(\Spec(\mathbb{S})) = \Groth_\comp
\]
the functor of pushforward along the projection $X \rightarrow \Spec(\mathbb{S})$ (see proposition \ref{proposition pushforward functoriality}). We equip $\Gamma_{\comp}(X, -)$ with its canonical lax symmetric monoidal structure. Let $\QCoh_X^\cn$ be the unit of $\twoQCoh^\pst_\comp(X)$, and observe that we have an equivalence of symmetric monoidal categories $\Gamma_\comp(X, \QCoh_X^\cn) = \QCoh(X)^\cn$. We denote by
\[
\Gamma_\comp^\enh(X, -): \twoQCoh^\pst_\comp(X)  = \Mod_{\QCoh_X^\cn}(\twoQCoh^\pst_\comp(X)) \rightarrow \Mod_{\QCoh(X)^\cn}(\Groth_\comp)
\]
the induced functor.
\end{notation}

\begin{corollary}\label{coro affineness with gammaenh}
Let $X$ be a quasi-compact geometric stack with quasi-affine diagonal. Then the functor
\[
\Gamma_\comp^\enh(X, -): \twoQCoh^\pst_\comp(X) \rightarrow \Mod_{\QCoh(X)^\cn}(\Groth_\comp)
\]
from notation \ref{notation gamma enh} is an equivalence.
\end{corollary}
\begin{proof}
This follows from theorem \ref{theorem affineness}, using the fact that the functor
\begin{align*}
\Mod_{\QCoh(X)^\cn}(\Groth_\comp) & =   \Mod_{\QCoh(X)^\cn}(\Mod_{\QCoh(X)^\cn}(\Groth_\comp)) \\ & \rightarrow \Mod_{\QCoh(X)^\cn}(\Groth_\comp)
\end{align*}
induced from the lax symmetric monoidal structure on the forgetful functor 
\[
\Mod_{\QCoh(X)^\cn}(\Groth_\comp) \rightarrow \Groth_\comp
\]
 is an equivalence.
\end{proof}

We devote the remainder of this section to the proof of theorem \ref{theorem affineness}.

\begin{lemma}\label{lemma Gammaenh right adjoint}
Let $X$ be a quasi-compact geometric stack with quasi-affine diagonal. Then the functor $\Gamma^\enh_\comp(X, -)$ from notation \ref{notation gamma enh} is right adjoint to $\Phi_X$.
\end{lemma}
\begin{proof}
Write $X$ as the colimit in $\Stk$ of a diagram of affine schemes $X_\alpha$ and flat transitions, and for each $\alpha$ let $p_\alpha: X_\alpha \rightarrow X$ be the projection.  Arguing as in the proof of proposition \ref{proposition pushforward functoriality}, we have that $\Phi_X$ admits a right adjoint that sends each object $\ccal$ in $\twoQCoh^{\pst}_\comp(X)$ to the limit of the diagram $p_\alpha^* \ccal$ (where here $p_\alpha^*\ccal$ is regarded as a $\QCoh(X)^\cn$-module via restriction of scalars along $p_\alpha^*: \QCoh(X)^\cn \rightarrow \QCoh(X_\alpha)^\cn$). Since $\QCoh(X)^\cn = \lim \QCoh(X_\alpha)^\cn$, we see that the right adjoint to $\Phi_X$ preserves the unit. We may now identify $\Gamma^\enh_\comp(X,-)$ with the composite functor
\begin{align*}
\twoQCoh^\pst_\comp(X) = \Mod_{\QCoh_X^\cn}(\twoQCoh^\pst_\comp(X)) & \rightarrow \Mod_{\QCoh(X)^\cn}( \Mod_{\QCoh(X)^\cn}(\Groth_\comp) ) \\ & \rightarrow  \Mod_{\QCoh(X)^\cn}(\Groth_\comp)
\end{align*}
where the first arrow is induced from the lax symmetric monoidal structure on the right adjoint to $\Phi_X$ and the second arrow is induced from the lax symmetric monoidal structure on the forgetful functor $\Mod_{\QCoh(X)^\cn}(\Groth_\comp) \rightarrow \Groth_\comp$. The lemma follows from the fact that the second arrow is an equivalence.
\end{proof}
 
\begin{lemma}\label{lemma phi of left exact}
Let $X$ be a quasi-compact geometric stack with quasi-affine diagonal. 
\begin{enumerate}[\normalfont (1)]
\item Let $f: \ccal \rightarrow \dcal$ be a morphism in $\Mod_{\QCoh(X)^\cn}(\Groth_\comp)$. Assume that the functor underlying $f$ is left exact (resp. compact). Then $\Phi_X(f)$ is left exact (resp. almost compact).
\item The functor $\Phi_X$ preserves limits of diagrams with left exact transitions.
\end{enumerate}
\end{lemma}
\begin{proof}
To prove (1) it  suffices to show that for every connective commutative ring spectrum $R$ and every map $\eta: \Spec(R) \rightarrow X$, the induced functor
\[
- \otimeshat_{\QCoh(X)^\cn} \Mod_R^\cn : \Mod_{\QCoh(X)^\cn}(\Groth_\comp) \rightarrow \Groth_\comp
\]
preserves left exactness and sends compact maps to almost compact maps. This follows directly from propositions \ref{proposition relative tensor compact} and  \ref{proposition relative tensor left exact}.   By proposition \ref{proposition limits y colimits globalized}, in order to prove (2) we must show in addition that the above functor preserves limits of diagrams with left exact transitions. Using proposition \ref{proposition relative tensor y limits} we may reduce to showing that $\Mod_R$ is dualizable as a $\QCoh(X)$-module in $\Pr^L$. Since  $\eta$ is quasi-affine we have a $\QCoh(X)$-linear identification $\Mod_R = \Mod_B(\QCoh(X))$ for some commutative algebra $B$ in $\QCoh(X)$. The fact that this is dualizable is \cite{HA} remark 4.8.4.8.
\end{proof}

\begin{lemma}\label{lemma base change pullback}
Let $R \rightarrow S$ be a flat morphism of connective commutative ring spectra, and let $h: \ccal \rightarrow \dcal$ be a compact morphism in $\Groth_R$. Then the commutative square
\[
\begin{tikzcd}
\ccal \otimes_R S \arrow{r}{h \otimes_R S} & \dcal \otimes_R S \\
\ccal \arrow{r}{h} \arrow{u}{- \otimes_R S} & \dcal \arrow{u}{- \otimes_R S}
\end{tikzcd}
\] 
is horizontally right adjointable.
\end{lemma}
\begin{proof}
The square in the statement sits in a commutative diagram
\[
\begin{tikzcd}
\ccal \arrow{r}{h} & \dcal \\
\ccal \otimes_R S \arrow{r}{h \otimes_R S} \arrow{u}{} & \dcal \otimes_R S \arrow{u}{} \\
\ccal \arrow{r}{h} \arrow{u}{- \otimes_R S} & \dcal \arrow{u}{- \otimes_R S}
\end{tikzcd}
\] 
where the top vertical arrows are the forgetful functors. We note that the top square is vertically left adjointable, and since the horizontal arrows admit right adjoints we have that it is also horizontally right adjointable. Since the top left vertical arrow is conservative, the lemma will follow if we are able to show that the outer commutative square in the above diagram is horizontally right adjointable. In other words, we have to show that the right adjoint to $h$ commutes with the actions of the connective $R$-module $S$. By \cite{HA} theorem 7.2.2.15, we may write $S$ as a filtered colimit of a diagram of finitely generated free $R$-modules. The desired claim follows from the fact that the right adjoint to $h$ commutes with filtered colimits and with the action of dualizable objects.
\end{proof}

\begin{lemma}\label{lemma Gamma y colimits}
Let $X$ be a geometric stack. 
\begin{enumerate}[\normalfont(1)]
\item The functor  $\Gamma^\enh_\comp(X, -)$ sends compact arrows to compact arrows. 
\item Let $\ccal_\beta$ be a diagram in $\twoQCoh^{\pst}_\comp(X)$ with compact transitions. Then $\Gamma^\enh_\comp(X, -)$ preserves the colimit of $\ccal_\beta$.
\end{enumerate}
\end{lemma}
\begin{proof}
By proposition \ref{proposition colimits in Modcomp}, it is enough to show that the lemma holds with $\Gamma^\enh_\comp(X,-)$ replaced with $\Gamma_\comp(X, -): \twoQCoh^\pst_\comp(X) \rightarrow \Groth_\comp$. We will show (1) and (2) simultaneously, by proving that $\Gamma_\comp(X, -)$ preserves the colimit of $\ccal_\beta$, and that the transitions in $\Gamma_\comp(X, \ccal_\beta)$ are compact.

  Write $X$ as the colimit of a diagram of affine schemes $X_\alpha$ with flat transitions, and denote by $p_\alpha: X_\alpha \rightarrow X$ the projections. Then we have $  \Gamma_\comp(X, \ccal_\beta) = \lim p_\alpha^* \ccal_\beta $ (where here we regard $p_\alpha^* \ccal_\beta$ as an object of $\Groth_\comp$). Let $\ccal$ be the colimit of the diagram $\ccal_\beta$ in $\twoQCoh^{\pst}(X)$, so that $\widehat{\ccal}$ is the colimit of $\ccal_\beta$ in $\twoQCoh^{\pst}_\comp(X)$. By proposition \ref{proposition limits y colimits globalized} for each $\alpha$ we have that $p_\alpha^* \ccal$ is the colimit in $\Groth^\cnorm$ of $p_\alpha^*\ccal_\beta$. Combining lemma \ref{lemma base change pullback} and proposition \ref{proposition commute limits and colimits} we deduce that the transitions in the diagram $\lim p_\alpha^* \ccal_\beta$ are compact, and its colimit in $\Groth$ is given by $\lim p^*_\alpha \ccal$. Since the completion functor $\Groth \rightarrow \Groth_\comp$ preserves limits of diagrams with left exact transitions, we deduce that the colimit in $\Groth_\comp$ of the diagram $\lim p_\alpha^* \ccal_\beta$ is given by $\lim \widehat{p_\alpha^* \ccal} =  \lim  p_\alpha^*\widehat{\ccal}$, as desired.   
\end{proof}

\begin{notation}
Let $\ccal$ be a category with finite limits, and assume given a pair of maps $U \rightarrow X \leftarrow V$ in $\ccal$. We denote by $\operatorname{CoBar}_X(U, V)_\bullet$ be the cosimplicial object of $\ccal$ with entries $\operatorname{CoBar}_X(U, V)_n = U \times X^n \times V$. Note that there is a canonical coaugmentation $U \times_X V \rightarrow \operatorname{CoBar}_X(U, V)_\bullet$.
\end{notation}

\begin{lemma}\label{lemma QCoh of cobar}
Let $X$ be a  geometric stack with quasi-affine diagonal, and let $p: U \rightarrow X$ and $q: V \rightarrow X$ be a pair of maps with $U, V$ affine. Then $\QCoh(U \times_X V)^\cn$ is the geometric realization of $\QCoh( \operatorname{CoBar}_X(U, V)_\bullet)^\cn$ in $\Groth$. 
\end{lemma}
\begin{proof}
Since the diagonal of $X$ is quasi-affine, the coface maps of $\operatorname{CoBar}_X(U, V)_\bullet$ are quasi-affine. Consequently, the face maps of $\QCoh( \operatorname{CoBar}_X(U, V)_\bullet)^\cn$ are compact. To prove the lemma it will suffice to show the following:
\begin{enumerate}[\normalfont (i)]
\item $\QCoh(U \times_X V)$ is the geometric realization of $\QCoh( \operatorname{CoBar}_X(U, V)_\bullet)$ in $\Pr^L$.
\item $\QCoh(U \times_X V)^\cn$ is generated under colimits and extensions by the image of the pullback functor $\QCoh(U \times V)^\cn \rightarrow \QCoh(U \times_X V)^\cn$. 
\end{enumerate}

Since $U \times_X V$ is quasi-affine we have that $\QCoh(U \times_X V)^\cn$ is generated under colimits and extensions by the unit (\cite{SAG} corollary 2.5.6.4), and hence (ii) follows. We now establish (i). Passing to right adjoints we reduce to showing that $\QCoh(U \times_X V)$ is the totalization of the  cosimplicial category $\QCoh(\operatorname{CoBar}_X(U, V)_\bullet)_*$. obtained from $\operatorname{CoBar}_X(U, V)_\bullet$ by using the pushforward functoriality on $\QCoh$. 

Consider the cosimplicial stack $ \operatorname{CoBar}_X(U, V)_{\bullet + 1}$  which is obtained from $ \operatorname{CoBar}_X(U, V)_\bullet $ by composing with the functor $- \star [0] : \Delta \rightarrow \Delta$, and note that we have a canonical map $ \operatorname{CoBar}_X(U, V)_\bullet  \rightarrow  \operatorname{CoBar}_X(U, V)_{\bullet + 1} $.  For every injective map $\varphi: [n] \rightarrow [m]$  the induced commutative square of stacks 
\[
\begin{tikzcd}
\operatorname{CoBar}_X(U, V)_n  \arrow{d}{} \arrow{r}{} & \operatorname{CoBar}_X(U, V)_m \arrow{d}{} \\
 \operatorname{CoBar}_X(U, V)_{n+1}  \arrow{r}{} & \operatorname{CoBar}_X(U, V)_{m+1}
\end{tikzcd}
\]
is cartesian and has quasi-affine horizontal arrows. Consequently, the commutative square of  categories
\[
\begin{tikzcd}
\QCoh(\operatorname{CoBar}_X(U, V)_n)_* \arrow{d}{} \arrow{r}{} & \QCoh(\operatorname{CoBar}_X(U, V)_m)_* \arrow{d}{} \\
  \QCoh(\operatorname{CoBar}_X(U, V)_{n+1})_* \arrow{r}{} &  \QCoh(\operatorname{CoBar}_X(U, V)_{m+1})_*
\end{tikzcd}
\]
is vertically left adjointable. Similarly, the commutative square 
\[
\begin{tikzcd}
U \times_X V \arrow{d}{} \arrow{r}{} & \operatorname{CoBar}_X(U, V)_0 \arrow{d}{} \\
\operatorname{CoBar}_X(U, V)_0 \arrow{r}{} & \operatorname{CoBar}_X(U, V)_{1}
\end{tikzcd}
\]
is cartesian and has quasi-affine arrows, and hence the commutative square of categories 
\[
\begin{tikzcd}
\QCoh(U \times_X V) \arrow{d}{} \arrow{r}{} & \QCoh( \operatorname{CoBar}_X(U, V)_0 )_* \arrow{d}{} \\
\QCoh(\operatorname{CoBar}_X(U, V)_0 )_* \arrow{r}{} & \QCoh( \operatorname{CoBar}_X(U, V)_1)_*
\end{tikzcd}
\]
is vertically left adjointable. An application of lemma \ref{lemma how to check vert adjointable} now shows that the square
\[
\begin{tikzcd}
\QCoh(U \times_X V) \arrow{d}{} \arrow{r}{} & \Tot \QCoh(\operatorname{CoBar}_X(U, V)_\bullet)_* \arrow{d}{} \\
\QCoh(\operatorname{CoBar}_X(U, V)_0)_* \arrow{r}{\id} & \QCoh(\operatorname{CoBar}_X(U, V)_0)_*
\end{tikzcd}
\]
is vertically left adjointable. Since the right vertical arrow is monadic we may reduce to showing that the left vertical arrow is monadic. This follows from the fact that the map $U \times_X V \rightarrow U \times V$ is quasi-affine. 
\end{proof}

\begin{lemma}\label{lemma adjointability vs tensor products}
Let
\[
\begin{tikzcd}
\Bcal' & \acal' \arrow{l}{} \\
\Bcal \arrow{u}{} & \acal \arrow{u}{} \arrow{l}{}
\end{tikzcd}
\]
be a commutative square of admissible commutative algebras in $\Groth_\comp$. Then the following are equivalent:
\begin{enumerate}[\normalfont (1)]
\item The square
\[
\begin{tikzcd}
\Mod_{\Bcal'}(\Groth_\comp) & \Mod_{\acal'}(\Groth_\comp) \arrow{l}[swap]{- \otimeshat_{\acal'} \Bcal'} \\
\Mod_{\Bcal }(\Groth_\comp)\arrow{u}[swap]{- \otimeshat_{\Bcal} \Bcal'} &  \Mod_{\acal}(\Groth_\comp) \arrow{l}[swap]{- \otimeshat_{\acal} \Bcal} \arrow{u}[swap]{- \otimeshat_{\acal} \acal'}
\end{tikzcd}
\]
is vertically right adjointable.
\item The canonical map $\Bcal \otimeshat_{\acal} \acal' \rightarrow \Bcal'$ is an equivalence.
\end{enumerate}
\end{lemma}
\begin{proof}
Item (1) amounts to the assertion that for every object $\Mcal$ in $\Mod_{\acal'}(\Groth_\comp)$ the canonical map
\[
\eta_{\Mcal} : \Mcal \otimeshat_{\acal} \Bcal \rightarrow \Mcal \otimeshat_{\acal'} \Bcal'
\]
is an equivalence. If this holds then in particular setting $\Mcal = \acal'$ we deduce that (2) holds. It remains to show the converse. By proposition \ref{proposition colimits in Modcomp}, the source and the target of $\eta_{\Mcal}$ are functors of $\Mcal$ which preserve colimits of diagrams with almost compact transitions. We note that $\Mcal$ is the colimit of $\Barhat_{\acal'}(\acal', \Mcal)_\bullet$, which has compact transitions. It therefore suffices to show that for every $n \geq 0$ the map $\eta_{\Barhat_{\acal'}(\acal', \Mcal)_n}$ is an isomorphism. Replacing $\Mcal$ with $\Barhat_{\acal'}(\acal', \Mcal)_n$ we may now reduce to the case when $\Mcal = \acal' \otimeshat \Ncal$ is a free $\acal'$-module. Since the functors of extension and restriction of scalars commute with the action of $\Groth_\comp$, we may further reduce to the case $\Mcal = \acal'$, which follows from (2).
\end{proof}

\begin{proof}[Proof of theorem \ref{theorem affineness}]
We first prove that $\Phi_X$ is a localization functor. To show this it is enough to prove that the commutative square of categories
\[
\begin{tikzcd}
\twoQCoh^{\pst}_\comp(X) & \twoQCoh^{\pst}_\comp(X) \arrow{l}[swap]{\id} \\
\twoQCoh^{\pst}_\comp(X) \arrow{u}{\id} & \Mod_{\QCoh(X)^\cn}(\Groth_\comp) \arrow{l}[swap]{\Phi_X} \arrow{u}{\Phi_X}
\end{tikzcd}
\]
is vertically right adjointable. Applying lemma \ref{lemma how to check vert adjointable} we may reduce to showing that for every map $p: U \rightarrow X$ with $U$ affine the commutative square
\[
\begin{tikzcd}[column sep = large]
\twoQCoh^{\pst}_\comp(U) & \twoQCoh^{\pst}_\comp(X) \arrow{l}[swap]{p^*} \\
\twoQCoh^{\pst}_\comp(U) \arrow{u}{\id} & \Mod_{\QCoh(X)^\cn}(\Groth_\comp) \arrow{l}[swap]{p^* \circ \Phi_X} \arrow{u}{\Phi_X}
\end{tikzcd}
\]
is vertically right adjointable.  Combining lemma \ref{lemma phi of left exact} with proposition \ref{proposition limits y colimits globalized} we see that $p^* \circ \Phi_X$ preserves limits of diagrams with left exact transitions. Arguing as in the proof of proposition \ref{proposition pushforward functoriality}, we may now reduce to showing that for every map $q: V \rightarrow X$ with $V$ affine the commutative square
\[
\begin{tikzcd}
\twoQCoh^{\pst}_\comp(U\times_X V) & \twoQCoh^{\pst}_\comp(V) \arrow{l}[swap]{p'^*} \\
\twoQCoh^{\pst}_\comp(U) \arrow{u}{q'^*} & \Mod_{\QCoh(X)^\cn}(\Groth_\comp) \arrow{l}[swap]{p^* \circ \Phi_X} \arrow{u}{q^* \circ \Phi_X}
\end{tikzcd}
\]
is vertically right adjointable, where here $p'$ and $q'$ denote the base changes of $p$ and $q$. The above square is equivalent to the square obtained by passage to categories of modules in $\Groth_\comp$ of the square of admissible commutative algebras 
\[
\begin{tikzcd}
\QCoh(U\times_X V)^\cn & \QCoh(V)^\cn \arrow{l}[swap]{p'^*} \\
\QCoh(U)^\cn \arrow{u}{q'^*} & \QCoh(X)^\cn \arrow{l}[swap]{p^*} \arrow{u}{q^*}.
\end{tikzcd}
\]
Using lemma \ref{lemma adjointability vs tensor products} we may reduce to showing that the canonical map 
\[
\QCoh(U)^\cn \otimeshat_{\QCoh(X)^\cn} \QCoh(V)^\cn \rightarrow \QCoh(U \times_X V)^\cn
\] is an equivalence.  By theorem \ref{theo tensor product formulas} we have an equivalence of simplicial objects 
\[
\Barhat_{\QCoh(X)^\cn}(\QCoh(U)^\cn, \QCoh(V)^\cn)_{\bullet} = \QCoh(\operatorname{CoBar}_X(U, V)_\bullet)^\cn 
\]
which commutes with the augmentations to $\QCoh(U \times_X V)^\cn$. The desired claim now follows from  lemma \ref{lemma QCoh of cobar}.
 
 We next show that for every complete Grothendieck prestable category $\Dcal$ the free module $  \QCoh(X)^\cn \otimeshat \dcal$  belongs to the image of $\Gamma_\comp^\enh(X, -)$.  Let $p: U \rightarrow X$ be a faithfully flat map with $U$ affine, and denote by $U_\bullet$ the \v{C}ech nerve of $U$. By lemma \ref{lemma global sections y tensoring} we have that $\QCoh(X)^\cn \otimeshat \dcal$ is the totalization of the cosimplicial $\QCoh(X)^\cn$-module $\QCoh(U_\bullet)^\cn \otimeshat \dcal$. Since the face maps in this cosimplicial category are left exact, we see by lemma \ref{lemma phi of left exact} that the face maps in $\Phi_X( \QCoh(U_\bullet)^\cn \otimeshat \dcal)$ are left exact as well, and hence they admit a limit in $\twoQCoh^\pst_\comp(X)$ by  proposition \ref{proposition limits y colimits globalized}. It follows that the totalization of 
 \[
 \Gamma^\enh_\comp(X, \Phi_X( \QCoh(U_\bullet)^\cn \otimeshat \dcal))
 \] 
 belongs to the image of $\Gamma^\enh_\comp(X, -)$. Assume for a moment that $\QCoh(U_n)^\cn \otimeshat \dcal$ is known to be in the image of $\Gamma^\enh_\comp(X, -)$ for all $n$. Then, since $\Gamma^\enh_\comp(X,-)$ is fully faithful, we have an equivalence of cosimplicial $\QCoh(X)^\cn$-modules 
 \[
  \Gamma^\enh_\comp(X, \Phi_X( \QCoh(U_\bullet)^\cn \otimeshat \dcal)) =   \QCoh(U_\bullet)^\cn \otimeshat \dcal
 \]
 and therefore the totalization  of the right hand side belongs to the image of $\Gamma_\comp^\enh(X, -)$, as desired.
 
 We may thus reduce to showing that $\QCoh(U_n)^\cn \otimeshat \dcal$ belongs to the image of $\Gamma^\enh_\comp(X, -)$ for all $n$. This is obtained by restriction of scalars from $\QCoh(U_n)^\cn$. Consider the commutative square of categories
 \[
 \begin{tikzcd}[column sep = huge]
 \twoQCoh^\pst_\comp(U_n) \arrow{d}{} \arrow{r}{\Gamma^\enh_\comp(U_n, -)} & \Mod_{\QCoh(U_n)^\cn}(\Groth^\lex_\comp) \arrow{d}{} \\
  \twoQCoh^\pst_\comp(X) \arrow{r}{\Gamma^\enh_\comp(X, -)} & \Mod_{\QCoh(X)^\cn}(\Groth^\lex_\comp) 
 \end{tikzcd}
 \]
 where the left vertical arrow is given by pushforward along $U_n \rightarrow X$ and the right vertical arrow is given by restriction of scalars. Using the commutativity of the above we may reduce to showing that $\QCoh(U_n)^\cn \otimeshat \dcal$ belongs to the image of $\Gamma^\enh_\comp(U_n, -)$. Replacing $X$ by $U_n$ we may now assume that $X$ is quasi-affine. In this case $U_n$ is affine so that $\Phi_{U_n}$ is an equivalence. In particular, $\Gamma^\enh_{\comp}(U_n,-)$ is surjective, so its image contains $\QCoh(U_n)^\cn \otimeshat \dcal$, as desired.
 
 To finish the proof we will show that $\Gamma_\comp^\enh(X, -)$ is surjective. Let $\ccal$ be an object of $\Mod_{\QCoh(X)^\cn}(\Groth_\comp)$. We may write $\ccal$ as the geometric realization of the completed Bar construction $\Barhat_{\QCoh(X)^\cn}(\QCoh(X)^\cn, \ccal)_\bullet$. Each entry in this simplicial object is a free $\QCoh(X)^\cn$-module and in particular belongs to the image of $\Gamma^\enh_\comp(X, -)$. Using the fully faithfulness of $\Gamma^\enh_\comp(X, -)$   we have
 \[
 \Barhat_{\QCoh(X)^\cn}(\QCoh(X)^\cn, \ccal)_\bullet = \Gamma^\enh_\comp(X, \Phi_X(  \Barhat_{\QCoh(X)^\cn}(\QCoh(X)^\cn, \ccal)_\bullet  ) ).
 \]
 We claim that the face maps of $ \Phi_X(  \Barhat_{\QCoh(X)^\cn}(\QCoh(X)^\cn, \ccal)_\bullet  )$ are compact. To prove this it suffices to show that for every map $\eta: \Spec(R) \rightarrow X$ the face maps of 
 \[
 \eta^* \Phi_X(  \Barhat_{\QCoh(X)^\cn}(\QCoh(X)^\cn, \ccal)_\bullet  )
 \]
 are compact. The simplicial Grothendieck prestable category underlying the above is given by $\Barhat_{\QCoh(X)^\cn}(\Mod_R^\cn, \ccal)_\bullet$. The fact that the face maps are compact now follows from  corollary \ref{coro face maps compact} . Applying lemma \ref{lemma Gamma y colimits} we now have
 \begin{align*}
 \ccal &= | \Barhat_{\QCoh(X)^\cn}(\QCoh(X)^\cn, \ccal)_\bullet| \\
  & = | \Gamma^\enh_\comp(X, \Phi_X(  \Barhat_{\QCoh(X)^\cn}(\QCoh(X)^\cn, \ccal)_\bullet  ) )| \\
  & =  \Gamma^\enh_\comp(X, |\Phi_X(  \Barhat_{\QCoh(X)^\cn}(\QCoh(X)^\cn, \ccal)_\bullet  )| )
 \end{align*}
 and in particular $\ccal$ belongs to the image of $\Gamma^\enh_\comp(X,-)$, as desired.
\end{proof}


\ifx\inmain\undefined
\bibliographystyle{myamsalpha}
\bibliography{References}
\fi


\section{Consequences of affineness}\label{section consequences}

The goal of this section is to present some consequences of theorem \ref{theorem affineness}. We begin in \ref{subsection tensor product and integral transform formulas} by discussing applications to the study of tensor product and integral transform formulas for categories of quasicoherent sheaves. Here we give proofs of theorem  \ref{theorem tensor products introduction} and corollaries \ref{corollary integral transforms introduction} and \ref{corollary factorization homology introduction}, as well as some related statements. The fundamental observation is that the combination of $\twoQCoh^{\pst}_\comp$-affineness (theorem \ref{theorem affineness}) together with base change for $\twoQCoh^{\pst}_\comp$ (proposition \ref{proposition pushforward functoriality}) immediately gives rise to a tensor product formula identifying the category of connective quasicoherent sheaves on a fiber product $X \times_Y Z$ with the completed tensor product $\QCoh(X)^\cn \otimeshat_{\QCoh(Y)^\cn} \QCoh(Z)^\cn$.

We then proceed in \ref{subsection tannaka prestable} to give a proof of part (1) of theorem \ref{theorem intro tannaka}. We in fact present two proofs: in the first one, we make use of theorem \ref{theorem admissibility qcoh} to reduce this result to Lurie's Tannaka duality theorem (in its quasi-affine diagonal version, see \cite{SAG} theorem 9.2.0.2); our second proof is relatively self contained, and makes explicit use of the theory of sheaves of categories and theorem \ref{theorem affineness}.


\subsection{Tensor product and integral transform formulas}\label{subsection tensor product and integral transform formulas}

We begin by discussing an application of theorem \ref{theorem affineness} to the study of tensor product formulas for categories of quasicoherent sheaves.

\begin{theorem}\label{theorem relative tensor products}
Let $f: X \rightarrow Y$ and $g: Z \rightarrow Y$  be maps of quasi-compact geometric stacks with quasi-affine diagonal. Then the canonical functor
\[
\QCoh(X)^\cn \otimeshat_{\QCoh(Y)^\cn} \QCoh(Z)^\cn \rightarrow \QCoh(X \times_Y Z)^\cn
\]
is an equivalence.
\end{theorem}
\begin{proof}
By lemma \ref{lemma adjointability vs tensor products}, it is enough to show that the commutative square
\[
\begin{tikzcd}[column sep = 5cm]
\Mod_{\QCoh(X\times_Y Z)^\cn}(\Groth_\comp) & \Mod_{\QCoh(X)^\cn}(\Groth_\comp) \arrow{l}[swap]{- \otimeshat_{\QCoh(X)^\cn} \QCoh(X \times_Y Z)^\cn} \\
\Mod_{\QCoh(Z)^\cn}(\Groth_\comp) \arrow{u}[swap]{- \otimeshat_{\QCoh(Z)^\cn} \QCoh(X\times_Y Z)^\cn} & \Mod_{\QCoh(Y)^\cn}(\Groth_\comp) \arrow{l}[swap]{- \otimeshat_{\QCoh(Y)^\cn} \QCoh(Z)^\cn} \arrow{u}[swap]{- \otimeshat_{\QCoh(Y)^\cn} \QCoh(X)^\cn}
\end{tikzcd}
\]
is vertically right adjointable. Using theorem \ref{theorem affineness} we see that the above is equivalent to the square
\[
\begin{tikzcd}
\twoQCoh^{\pst}_\comp(X \times_Y Z) & \twoQCoh^{\pst}_\comp(X ) \arrow{l}{} \\
\twoQCoh^{\pst}_\comp(  Z) \arrow{u}{} & \twoQCoh^{\pst}_\comp(Y) \arrow{l}[swap]{g^*} \arrow{u}[swap]{f^*}.
\end{tikzcd}
\]
The result now follows from an application of proposition \ref{proposition pushforward functoriality}.
\end{proof}

\begin{remark}
Theorem \ref{theorem relative tensor products} continues to hold with the same proof provided that $X$ is a quasi-compact geometric stack, the diagonal of $X$ is a quasi-compact quasi-separated relative algebraic space, and the following condition is satisfied:
\begin{enumerate}[\normalfont $(\ast)$]
\item Let $U \rightarrow X$ and $V \rightarrow X$ be a pair of morphisms with $U, V$ affine. Then $\QCoh(U \times_X V)^\cn$ is generated under colimits and extensions by $\Ocal_{U \times_X V}$.
\end{enumerate}
This reduces to the assertion that theorem \ref{theorem affineness} holds in this generality, see remark \ref{remark generality affineness}.
\end{remark}

\begin{corollary}\label{coro tensor product stable w t structure}
Let $f: X \rightarrow Y$ and $g: Z \rightarrow Y$  be maps of quasi-compact geometric stacks with quasi-affine diagonal. Equip $\QCoh(X) \otimes_{\QCoh(Y)} \QCoh(Z)$ with the t-structure whose connective half is generated under colimits and extensions by those objects of the form $\Fcal \otimes \Gcal$ with $\Fcal$ and $\Gcal$ connective. Then the canonical functor
\[
\QCoh(X) \otimes_{\QCoh(Y) } \QCoh(Z)  \rightarrow \QCoh(X \times_Y Z) 
\]
is t-exact, and exhibits $\QCoh(X \times_Y Z)$ as the left completion of $\QCoh(X) \otimes_{\QCoh(Y) } \QCoh(Z) $.
\end{corollary}
\begin{proof}
By corollary \ref{coro face maps compact} the Bar construction $\Bar_{\QCoh(Y)^\cn}(\QCoh(X)^\cn, \QCoh(Z)^\cn)_\bullet$ has almost compact face maps, and it therefore admits a geometric realization in $\Groth$. It follows from theorem \ref{theorem relative tensor products} that $\QCoh(X \times_Y Z)^\cn$ is the completion of this geometric realization. The corollary follows from this by passing to spectrum objects, using the characterization of colimits in $\Groth^\acnorm$ from remark \ref{remark colimit along compact}.
\end{proof}

\begin{corollary}\label{coro tensor product clasico}
Let $f: X \rightarrow Y$ and $g: Z \rightarrow Y$  be maps of quasi-compact geometric stacks with quasi-affine diagonal. Assume that $Y$ has affine diagonal. Then for every $n \geq 0$ the canonical functor 
\[
\QCoh(X)^\cn_{\leq n} \otimes_{\QCoh(Y)^\cn_{\leq n}} \QCoh(Z)^\cn_{\leq n} \rightarrow \QCoh(X \times_Y Z)^\cn_{\leq n} 
\]
is an equivalence. In particular, setting $n = 0$ we have that the canonical functor
 \[
 \QCoh(X)^\heartsuit \otimes_{\QCoh(Y)^\heartsuit} \QCoh(Z)^\heartsuit \rightarrow \QCoh(X \times_Y Z)^\heartsuit
 \]
is an equivalence.
\end{corollary}
\begin{proof}
  Combining part (2) of theorem \ref{theorem admissibility qcoh} with theorem \ref{theo modules over almost rigid algebras} we see that the face maps in  $\Barhat_{\QCoh(Y)^\cn}(\QCoh(X)^\cn, \QCoh(Z)^\cn)_\bullet$ admit colimit preserving right adjoints.  By \cite{SAG} remark C.3.5.4 we have that the geometric realization of $\Barhat_{\QCoh(Y)^\cn}(\QCoh(X)^\cn, \QCoh(Z)^\cn)_\bullet$ inside $\Pr^L$ is in fact Grothendieck prestable. This geometric realization may be computed as the limit of the semicosimplicial diagram obtained by passing to right adjoints of the face maps, and hence it is complete. Using theorem \ref{theorem relative tensor products} we deduce that $\QCoh(X \times_Y Z)^\cn$ is the geometric realization in $\Pr^L$ of $\Barhat_{\QCoh(Y)^\cn}(\QCoh(X)^\cn, \QCoh(Z)^\cn)_\bullet$. We now have
  \begin{align*}
  \QCoh(X \times_Y Z)^\cn_{\leq n} & =  |\Barhat_{\QCoh(Y)^\cn}(\QCoh(X)^\cn, \QCoh(Z)^\cn)_\bullet|_{\leq n}  \\ & = |(\Barhat_{\QCoh(Y)^\cn}(\QCoh(X)^\cn, \QCoh(Z)^\cn)_\bullet)_{\leq n}| \\ & = |(\Bar_{\QCoh(Y)^\cn}(\QCoh(X)^\cn, \QCoh(Z)^\cn)_\bullet)_{\leq n}| \\ &= \QCoh(X)^{\cn}_{\leq n}  \otimes_{\QCoh(Y)^\cn_{\leq n}} \QCoh(Z)^\cn_{\leq n}. \qedhere
  \end{align*}
\end{proof}

The tensor product formulas above involve either completions or restriction to truncated objects. We now discuss some results that apply to the stable categories without any reference to t-structures, and which are thus of the same nature as the tensor product formulas found for instance in \cite{BZFN}.

\begin{corollary}\label{coro dualizability QCoh}
Let $X$ be a quasi-compact geometric stack with quasi-affine diagonal. Assume that the structure sheaf $\Ocal_X$ is compact and truncated. Let $\mu : \QCoh(X) \otimes  \QCoh(X) \rightarrow \QCoh(X)$ be the functor induced by tensor products. Then the composite functor
\[
\QCoh(X) \otimes \QCoh(X) \xrightarrow{\mu} \QCoh(X) \xrightarrow{\Gamma(X, -)} \Sp
\]
is the counit of a self-duality in $\Pr^L_{\St}$.
\end{corollary}
\begin{proof}
Using corollary \ref{coro tensor product stable w t structure} together with the fact that $\Ocal_X$ is truncated we may find an object in $\QCoh(X) \otimes \QCoh(X)$ whose image in $\QCoh(X \times  X)$ recovers the structure sheaf of the diagonal. The desired claim now follows from \cite{SAG} proposition 9.4.3.1.
\end{proof}

\begin{corollary}\label{corollary smoothness and properness}
Let $X$ be a quasi-compact geometric stack with quasi-affine diagonal. Assume that $\Ocal_X$ is compact and truncated. Then $\QCoh(X)$ is smooth and proper as an algebra in $\Pr^L_{\St}$. 
\end{corollary}
\begin{proof}
Properness follows directly from corollary \ref{coro dualizability QCoh}. Smoothness follows from \cite{SAG} proposition 9.4.2.1, in light of \cite{SAG} corollary 9.4.2.6.
\end{proof}

\begin{corollary}\label{corollary tensor product when compact and truncated}
Let $f: X \rightarrow Y$ and $g: Z \rightarrow Y$  be maps of stacks, where $X$ and $Y$ are quasi-compact geometric stacks with quasi-affine diagonal. Assume that $\Ocal_X$ and $\Ocal_Y$ are compact and truncated. Then the canonical functor
\[
\QCoh(X) \otimes_{\QCoh(Y)} \QCoh(Z)  \rightarrow \QCoh(X \times_Y Z) 
\] 
is an equivalence. 
\end{corollary}
\begin{proof}
It follows from corollaries \ref{coro dualizability QCoh} and  \ref{corollary smoothness and properness} that $\QCoh(X)$ is dualizable as a module over $\QCoh(Y)$.  Consequently, both sides of the functor in the statement are limit preserving in $Z$. We may thus reduce to the case when $Z$ is affine. In this case the map $g$ is quasi-affine, and hence $\QCoh(Z) = \Mod_{g_*\Ocal_Z}(\QCoh(Y))$. Let $f':X \times_Y Z \rightarrow Z$ and $g': X \times_Y Z \rightarrow X$ be the projections. Then we have
\begin{align*}
\QCoh(X) \otimes_{\QCoh(Y)} \QCoh(Z) & = \Mod_{f^* g_*\Ocal_Z}(\QCoh(X)) \\ &=  \Mod_{g'_*f'^*\Ocal_Z}(\QCoh(X)) \\ & = \Mod_{g'_* \Ocal_{X \times_Y Z}}(\QCoh(X)).
\end{align*}
The above agrees with $\QCoh(X \times_Y Z)$ since $g'$ is quasi-affine.
\end{proof}

We also have the following integral transforms result:

\begin{corollary}
Let $f: X \rightarrow Y$ and $g: Z \rightarrow Y$  be maps of stacks, where $X$ and $Y$ are quasi-compact geometric stacks with quasi-affine diagonal. Assume that $\Ocal_X$ and $\Ocal_Y$ are compact and truncated.  Then there is an equivalence
\[
\QCoh(X \times_Y Z) = \Fun^L_{\QCoh(Y)}(\QCoh(X), \QCoh(Z)).
\]
\end{corollary}
\begin{proof}
By corollary \ref{coro dualizability QCoh} we have that $\QCoh(Y)$ and $\QCoh(X)$ are Frobenius algebras in $\Pr^L_{\St}$, in the sense of \cite{HA} 4.6.5. Combining this with \cite{HA} corollary 4.6.5.14 we deduce that $\QCoh(X)$ is self dual as a $\QCoh(Y)$-module in $\Pr^L$. The desired assertion now follows from corollary \ref{corollary tensor product when compact and truncated}.
\end{proof}

Theorem \ref{theorem relative tensor products} may be used to obtain a computation of the factorization homology of categories of quasicoherent sheaves  up to completion. More generally, we have the following result that deals with tensoring by finite spaces:
 
\begin{corollary}\label{corollary tensor and mapping spaces}
Let $X$ be a quasi-compact geometric stack with quasi-affine diagonal. Let $S$ be a finite space and let $S \otimes \QCoh(X)$ be the tensor (i.e., copower) of $\QCoh(X)$ by $S$ in the category $\CAlg(\Pr^L)$. Equip $S \otimes \QCoh(X)$ with the t-structure whose connective half is generated under colimits, extensions and tensor products by the  union of the images of $\QCoh(X)^\cn$ under the canonical functors $\QCoh(X) \rightarrow S \otimes \QCoh(X)$. Then the canonical functor 
\[
S \otimes \QCoh(X) \rightarrow \QCoh(X^S)
\]
 is t-exact, and exhibits $\QCoh(X^S)$ as the left completion of $S \otimes \QCoh(X)$.
\end{corollary}
\begin{proof}
Let $\mathcal{G}$ be the full subcategory of $\PreStk$ on the quasi-compact geometric stacks with quasi-affine diagonal. By theorem \ref{theorem relative tensor products} the functor $\QCoh^\cn: \mathcal{G}^\op \rightarrow \CAlg(\Groth_\comp)$ preserves finite colimits, and in particular we may identify $\QCoh(X^S)^\cn$ with the tensor of $\QCoh(X)^\cn$ by $S$ in $\CAlg(\Groth_\comp)$. To finish the proof it will suffice to show the following:
\begin{enumerate}[$(\ast)$]
\item The tensor $S \otimes \QCoh(X)^\cn$ exists in $\CAlg(\Groth)$ and is preserved by the inclusion into $\CAlg(\Groth^+)$, where $\Groth^+$ is the symmetric monoidal category of cored presentable stable categories (see \ref{subsection grothstcomp}).
\end{enumerate}
The above holds whenever $S$ is empty or a singleton. We may thus reduce to proving the following:
\begin{enumerate}[$(\ast')$]
\item Let $S_1 \leftarrow S_0 \rightarrow S_2$ be a span of finite spaces with pushout $S$. If $S_0, S_1, S_2$ satisfy $(\ast)$ then $S$ satisfies $(\ast)$. 
\end{enumerate}
Since the inclusion $\Groth \rightarrow \Groth^+$ preserves colimits of diagrams with almost compact transitions (remark \ref{remark colimit along compact}) and the symmetric monoidal structure on $\Groth^+$ is compatible with colimits, it is enough to show that the face maps in 
\[
\Bar_{S_0 \otimes \QCoh(X)^\cn}(S_1 \otimes \QCoh(X)^\cn, S_2 \otimes \QCoh(X)^\cn)_\bullet
\] 
are almost compact (where here the tensors are taken in $\CAlg(\Groth)$). Combining proposition \ref{proposition lex y almost compact completion} and corollary \ref{coro face maps compact} we reduce to proving that the completion of $S_0 \otimes \QCoh(X)^\cn$ is an admissible commutative algebra in $\Groth_\comp$. This completion may be identified with the tensor of $\QCoh(X)^\cn$ by $S_0$ in $\CAlg(\Groth_\comp)$, which as observed above agrees with $\QCoh(X^{S_0})^\cn$. The desired claim now follows from an application of theorem \ref{theorem admissibility qcoh}.
\end{proof}

\begin{corollary}
Let $X$ be a quasi-compact spectral geometric stack with quasi-affine diagonal and let $M$ be a compact manifold. Then there is a canonical t-structure on the factorization homology $\int_M \QCoh(X)$ of the $E_\infty$ presentable stable $\infty$-category $\QCoh(X)$, whose left completion is equivalent to $\QCoh(X^M)$.
\end{corollary}
\begin{proof}
Specialize corollary \ref{corollary tensor and mapping spaces} to the case when $S$ is the homotopy type of $M$.
\end{proof}


\subsection{Tannaka duality}\label{subsection tannaka prestable}

Our next goal is to prove the following:

\begin{theorem}\label{theorem tannaka}
Let $X$ and $Y$ be geometric stacks, and assume that $X$ is quasi-compact and has quasi-affine diagonal. Then the assignment $f \mapsto f^*$ provides an equivalence
\[
\Hom_{\Stk}(Y, X) = \Hom_{\CAlg(\Pr^L)}(\QCoh(X)^\cn, \QCoh(Y)^\cn).
\]
\end{theorem}

Theorem \ref{theorem tannaka} is a strengthening of \cite{SAG} theorem 9.2.0.2, which holds in the case when $Y$ is a quasi-compact quasi-separated algebraic space  and identifies $\Hom_{\Stk}(Y, X)$ with the subspace of $ \Hom_{\CAlg(\Pr^L)}(\QCoh(X)^\cn, \QCoh(Y)^\cn)$ on those colimit preserving symmetric monoidal functors $F: \QCoh(X)^\cn \rightarrow \QCoh(Y)^\cn$ such that $\Sp(F)$ admits a colimit preserving right adjoint which commutes with the action of $\QCoh(X)$.

Below we present two proofs of theorem \ref{theorem tannaka}. In the first one we verify the hypothesis of \cite{SAG} theorem 9.2.0.2 directly by using theorem \ref{theorem admissibility qcoh}. The second one is relatively self contained and makes explicit use of the theory of sheaves of categories and theorem \ref{theorem affineness}.

\begin{proof}[First proof of theorem \ref{theorem tannaka}]
 Both sides are limit preserving in $Y$, so it is enough to prove the proposition in the case when $Y$ is affine. By \cite{SAG} theorem 9.2.0.2, we need to check that if $F: \QCoh(X)^\cn \rightarrow \QCoh(Y)^\cn$ is a colimit preserving symmetric monoidal functor then $\Sp(F)$ admits a colimit preserving right adjoint which commutes with the action of $\QCoh(X) $. We may write $F$ as the composition
 \[
 \QCoh(X)^\cn =  \QCoh(X)^\cn \otimeshat \Sp^\cn \xrightarrow{\id \otimes e} \QCoh(X)^\cn \otimeshat \QCoh(Y)^\cn \xrightarrow{\mu_{\QCoh(Y)^\cn}} \QCoh(Y)^\cn
 \]
 where the first map is induced from the unit $e: \Sp^\cn \rightarrow \QCoh(Y)^\cn$ and the second map is the structure morphism from the action of $\QCoh(X)^\cn$ on $\QCoh(Y)^\cn$ induced from $F$. Since $Y$ is affine we have that $\Sp(e)$ admits a t-exact colimit preserving right adjoint. Furthermore, combining   theorem \ref{theorem admissibility qcoh} and part (1) of theorem \ref{theorem modules over admissible} we see  that $\Sp(\mu_{\QCoh(Y)^\cn})$ admits a colimit preserving right adjoint which is right t-exact up to a shift. It follows that $\Sp(F)$ admits a colimit preserving right adjoint which is right t-exact up to a shift.
 
 It remains to show that the right adjoint to $\Sp(F)$ commutes  with the action of $\QCoh(X)^\cn$. By part (2) of theorem \ref{theorem modules over admissible} the commutative square
 \[
 \begin{tikzcd}[column sep = huge]
 \Sp(\QCoh(X)^\cn \otimeshat \QCoh(X)^\cn ) \arrow{r}{\Sp(\mu_{\QCoh(X)^\cn})} \arrow{d}{\Sp(\id \otimeshat F) }& \Sp(\QCoh(X)^\cn) \arrow{d}{\Sp(F)} \\
 \Sp(\QCoh(X)^\cn \otimeshat \QCoh(Y)^\cn )  \arrow{r}{\Sp(\mu_{\QCoh(Y)^\cn})} & \Sp(\QCoh(Y)^\cn)
 \end{tikzcd}
 \]
 is vertically right adjointable. Using remark \ref{remark adjointability in groth overline} we may regard $\Sp(F)$ as a right adjointable morphism in $\Groth^{\st}$, and consequently $\Sp(\id \otimes F)$ is also right adjointable. Using the $2$-categorical structure on the completion functor $\Groth^{\st} \rightarrow \Groth^{\st}_\comp$ we deduce that the square of categories
 \[
 \begin{tikzcd}
  \Sp(\QCoh(X)^\cn \otimes \QCoh(X)^\cn ) \arrow{d}{\Sp(\id \otimes  F)}  \arrow{r}{ }&  \Sp(\QCoh(X)^\cn \otimeshat \QCoh(X)^\cn ) \arrow{d}{\Sp(\id \otimeshat F) } \\
   \Sp(\QCoh(X)^\cn \otimes  \QCoh(Y)^\cn ) \arrow{r}{ }  &  \Sp(\QCoh(X)^\cn \otimeshat \QCoh(Y)^\cn ) 
 \end{tikzcd}
 \]
 is vertically right adjointable. It now follows that the square of categories
 \[
  \begin{tikzcd}[column sep = huge]
 \Sp(\QCoh(X)^\cn \otimes  \QCoh(X)^\cn ) \arrow{r}{\Sp(\mu_{\QCoh(X)^\cn})} \arrow{d}{\Sp(\id \otimes  F) }& \Sp(\QCoh(X)^\cn) \arrow{d}{\Sp(F)} \\
 \Sp(\QCoh(X)^\cn \otimes  \QCoh(Y)^\cn )  \arrow{r}{\Sp(\mu_{\QCoh(Y)^\cn})} & \Sp(\QCoh(Y)^\cn)
 \end{tikzcd}
 \]
  is vertically right adjointable. The above is equivalent to the square
   \[
  \begin{tikzcd}[column sep = huge]
 \QCoh(X) \otimes \QCoh(X) \arrow{r}{\mu_{\QCoh(X)}} \arrow{d}{\id \otimes \Sp(F) }&  \QCoh(X) \arrow{d}{\Sp(F)} \\
 \QCoh(X) \otimes \QCoh(Y) \arrow{r}{\mu_{\QCoh(Y)}} & \QCoh(Y).
 \end{tikzcd}
 \]
 This fits in a commutative diagram
  \[
  \begin{tikzcd}[column sep = huge]
 \QCoh(X) \times \QCoh(X) \arrow{r}{\boxtimes} \arrow{d}{\id \times \Sp(F)} & \QCoh(X) \otimes \QCoh(X) \arrow{r}{\mu_{\QCoh(X)}} \arrow{d}{\id \otimes \Sp(F) } &  \QCoh(X) \arrow{d}{\Sp(F)} \\
 \QCoh(X) \times \QCoh(Y) \arrow{r}{\boxtimes} & \QCoh(X) \otimes \QCoh(Y) \arrow{r}{\mu_{\QCoh(Y)}} & \QCoh(Y).
 \end{tikzcd}
 \]
 where the left square is vertically right adjointable by virtue of the fact that $\Sp(F)$ admits a colimit preserving right adjoint. It now follows that the outer square in the above diagram is vertically right adjointable, which means that the right adjoint to $\Sp(F)$ commutes strictly with the action of $\QCoh(X)$, as desired.
\end{proof}

We now turn to the second proof of theorem \ref{theorem tannaka}. We need some preliminary lemmas.

\begin{lemma}\label{lemma recognize affine}
Let $X$ be a quasi-compact geometric stack with quasi-affine diagonal. Assume that there is a symmetric monoidal equivalence $\QCoh(X)^\cn = \Mod_R^\cn$ for some connective commutative ring spectrum $R$. Then $X$ is affine.
\end{lemma}
\begin{proof}
The equivalence in the statement is given by pullback along a map of geometric stacks $f: X \rightarrow \Spec(R)$. We claim that $f$ is an equivalence. Let $p: U \rightarrow X$ be a faithfully flat map with $U$ affine. Then $p^* \circ f^* : \QCoh(\Spec(R))^\cn \rightarrow \QCoh(U)^\cn$ is left exact and conservative, and consequently $p \circ f$ is faithfully flat. Since $X$ and $\Spec(R)$ are the geometric realizations of the \v{C}ech nerves of $p$ and $p \circ f$, the lemma will follow if we show that the canonical map $\rho: U \times_X U \rightarrow U \times_{\Spec(R)} U$ is an isomorphism. By theorem \ref{theorem relative tensor products} we see that $\rho^*: \QCoh(U \times_{\Spec(R)} U)^\cn \rightarrow \QCoh(U \times_X U)^\cn$ is an equivalence. Replacing $X$ with $U \times_X U$ we may now reduce to proving that $f$ is an equivalence in the case when $X$ is quasi-affine. Replacing $X$ by $U \times_X U$ once more we may further reduce to showing that $f$ is an equivalence in the case when $X$ is affine, which is clear.
\end{proof}

\begin{lemma}\label{lemma tensor affine and quasi affine}
Let $X$ be a quasi-compact geometric stack with quasi-affine diagonal. Let $p: U \rightarrow X$ be a morphism of geometric stacks. Let $Z$ be an affine scheme and let $F: \QCoh(X)^\cn \rightarrow \QCoh(Z)^\cn$ be a colimit preserving symmetric monoidal functor.
\begin{enumerate}[\normalfont (1)]
\item If $p$ is affine then $\QCoh(U)^\cn \otimeshat_{\QCoh(X)^\cn} \QCoh(Z)^\cn = \QCoh(Z')^\cn$ for some affine scheme $Z'$.
\item If $p$ is quasi-affine then $\QCoh(U)^\cn \otimeshat_{\QCoh(X)^\cn} \QCoh(Z)^\cn = \QCoh(Z')^\cn$ for some quasi-affine scheme $Z'$.
\end{enumerate}
\end{lemma}
\begin{proof}
We first prove (1). Since $p$ is affine we have an equivalence 
\[
\QCoh(U)^\cn = \Mod_{A}(\QCoh(X)^\cn)
\] for some commutative algebra $A$ in $\QCoh(X)^\cn$, and therefore we have 
\[
\QCoh(U)^\cn \otimes_{\QCoh(X)^\cn} \ccal = \Mod_A(\ccal).
\]
The above is a complete Grothendieck prestable category. It follows that we also have 
\[
\QCoh(U)^\cn \otimeshat_{\QCoh(X)^\cn} \ccal = \Mod_A(\ccal)
\]
and consequently the canonical  functor $\ccal \rightarrow \QCoh(U)^\cn \otimeshat_{\QCoh(X)^\cn} \ccal $ has a colimit preserving conservative right adjoint. Since the unit of $\ccal$ is a compact projective colimit generator we see that the same holds for $\QCoh(U)^\cn \otimeshat_{\QCoh(X)^\cn} \ccal $. We therefore have an equivalence 
\[
\QCoh(U)^\cn \otimeshat_{\QCoh(X)^\cn} \ccal = \QCoh(Z')^\cn
\]
 for $Z'$ the spectrum of the algebra of endomorphisms of the unit in $\QCoh(U)^\cn \otimeshat_{\QCoh(X)^\cn} \ccal $.
 
 We now prove (2). Since $p$ is quasi-affine, it factors as a composition $U \rightarrow V \rightarrow X$ where the first map is a quasi-compact open immersion and the second map is affine. By (1), we have that $\QCoh(V) \otimeshat_{\QCoh(X)^\cn} \QCoh(Z)^\cn = \QCoh(W)^\cn$ for some affine scheme $W$. Replacing $X$ by $V$ and $Z$ by $W$ we may now reduce to the case when $p$ is a quasi-compact open immersion. In this case we have an equivalence $\QCoh(V) = \Mod_{A}(\QCoh(X))$ for some  some commutative algebra $A$ in $\QCoh(X)$ which is eventually connective, idempotent, and compact. Furthermore, we have
\[
\QCoh(U) \otimes_{\QCoh(X)} \QCoh(Z) = \Mod_{\Sp(F)(A)}(\QCoh(Z)).
\]
The geometric realization of $\Bar_{\QCoh(X)^\cn}(\QCoh(U)^\cn, \QCoh(Z)^\cn)_\bullet$ inside $\Groth$ consists of  the connective half of a t-structure on  $\Mod_{\Sp(F)(A)}(\QCoh(Z))$. This t-structure is defined by the following properties:
\begin{itemize}
\item The connective half is generated under colimits and extensions by those $\Sp(F)(A)$-modules of the form $\Sp(F)(A) \otimes M$ with $M $ in $\QCoh(Z)^\cn$.
\item A  $\Sp(F)(A)$-module is coconnective if and only if its image in $\QCoh(Z)$ is coconnective.
\end{itemize}

Since $\QCoh(Z)^\cn$ is generated under colimits by the unit, we in fact have that the connective half of $\Mod_{\Sp(F)(A)}(\QCoh(Z))$ is  generated under colimits and extensions by its unit $\Sp(F)(A)$. Consequently, to prove (2) it will be enough to show that we have a symmetric monoidal equivalence  $\Mod_{\Sp(F)(A)}(\QCoh(Z)) = \QCoh(Z')$ for some quasi-affine scheme $Z'$.  Using \cite{SAG} proposition 2.6.0.3 we may reduce to proving that $\Sp(F)(A)$ is eventually connective, idempotent, and compact. The first two properties follow directly from the fact that $A$ satisfies those properties. For the last property it will suffice to show that the functor $\CAlg(\QCoh(X)) \rightarrow \CAlg(\QCoh(Z))$ induced by $\Sp(F)$ admits a filtered colimit preserving right adjoint. This will follow if we show that $\Sp(F): \QCoh(X) \rightarrow \QCoh(Z)$ admits a colimit preserving right adjoint. In other words, we need to prove that $F$ is compact. 

We regard $F$ as a morphism of $\QCoh(X)^\cn$-modules in $\Groth_\comp$, which by theorem \ref{theorem affineness} may be written as $\Gamma^\enh(X, \overline{F})$ for some morphism $\overline{F}$ of commutative algebras in $\twoQCoh^{\pst}_\comp(X)$. By lemma \ref{lemma Gamma y colimits}, to prove that $F$ is compact it will suffice to show that $\overline{F}$ is compact. To do so it is enough to show that for every map $\eta: V \rightarrow X$ with $V$ affine the functor of Grothendieck prestable categories underlying $\eta^*\overline{F}$ is compact. In other words, we have to show that
\[
\id \otimeshat F: \QCoh(V)^\cn = \QCoh(V)^\cn \otimeshat_{\QCoh(X)^\cn} \QCoh(X)^\cn \rightarrow  \QCoh(V)^\cn \otimeshat_{\QCoh(X)^\cn} \QCoh(Z)^\cn
\]
is compact. This amounts to proving that the unit in $ \QCoh(V)^\cn \otimeshat_{\QCoh(X)^\cn} \QCoh(Z)^\cn$ is compact. As above, we may identify  $ \QCoh(V)^\cn \otimeshat_{\QCoh(X)^\cn} \QCoh(Z)^\cn$ with the completion of the connective half of a t-structure on $\Mod_{\Sp(F)(B)}(\QCoh(Z))$ for some eventually connective commutative algebra $B$ in $\QCoh(X)$. The forgetful functor $G: \Mod_{\Sp(F)(B)}(\QCoh(Z)) \rightarrow \QCoh(Z)$ has the following properties:
\begin{itemize}
\item $G$ creates limits.
\item $G$ is t-exact up to shifts.
\item If $G(M)$ is connective, then $M$ is connective (since in this case the Bar resolution of $M$ is levelwise connective).
\end{itemize}
Combined with the fact that the t-structure on $\QCoh(Z)$ is left complete, the above properties imply that the $t$-structure on $\Mod_{\Sp(F)(B)}(\QCoh(Z))$ is also left complete. We may thus reduce to showing that the unit of $\Mod_{\Sp(F)(B)}(\QCoh(Z)) $ is compact, which follows from the fact that the unit in $\QCoh(Z)$ is compact.
\end{proof}
 
\begin{proof} [Second proof of theorem \ref{theorem tannaka}]
Both sides are limit preserving in $Y$, so it is enough to prove the theorem in the case when $Y$ is affine. Consider the commutative square of categories
\[
\begin{tikzcd}
(\Aff_{/X})^\op \arrow{r}{\QCoh^\cn}\arrow{d}{} &  \CAlg(\Groth_\comp)_{\QCoh(X)^\cn /} \arrow{d}{} \\
\Aff^\op \arrow{r}{\QCoh^\cn} & \CAlg(\Groth_\comp).
\end{tikzcd}
\]
 The vertical arrows in the above square are left fibrations, and the morphism in the statement is obtained from the above square by passing to fibers over $Y$ and $\QCoh(Y)^\cn$. We may thus reduce to showing that the above is a pullback square. By \cite{HA} corollary 4.8.5.21 we have that the bottom horizontal arrow in the above square is fully faithful. Let
 \[
 \CAlg(\Groth_\comp)^{\text{aff}}_{\QCoh(X)^\cn /} = \Aff^{\op} \times_{ \CAlg(\Groth_\comp)} \CAlg(\Groth_\comp)_{\QCoh(X)^\cn /}.
 \]
In other words, this is the full subcategory of $\CAlg(\Groth_\comp)_{\QCoh(X)^\cn /}$  consisting of those objects $F: \QCoh(X)^\cn \rightarrow \ccal$ such that $\ccal = \QCoh(Z)^\cn$ for some affine scheme $Z$. Applying theorem \ref{theorem affineness} we may identify $ \CAlg(\Groth_\comp)^{\text{aff}}_{\QCoh(X)^\cn /}$ with a full subcategory $\CAlg(\twoQCoh^\pst_\comp(X))^{\text{aff}}$ of $\CAlg(\twoQCoh^\pst_\comp(X))$. Let 
\[
\Xi^{\text{aff}}_X : (\Aff_{/X})^\op \rightarrow \CAlg(\twoQCoh^\pst_\comp(X))^{\text{aff}}
\]
be the induced functor. Unwinding the definitions, this is the functor sending each map $f: Z \rightarrow X$ to $f_* \QCoh^\cn_Z$. Our goal is to show that $\Xi^{\text{aff}}_X$ is an equivalence.

Write $X$ as the colimit of a diagram of affine schemes $X_\alpha$. Assume for the moment that the diagonal of $X$ is affine. For each $\alpha$ define 
\[
\Xi^{\text{aff}}_{X_\alpha} : (\Aff_{/{X_\alpha}})^\op \rightarrow \CAlg(\twoQCoh^\pst_\comp(X_\alpha))^{\text{aff}}
\]
as above. Combining lemma \ref{lemma tensor affine and quasi affine} and proposition \ref{proposition pushforward functoriality} we obtain a commutative square
\[
\begin{tikzcd}
\lim (\Aff_{/X_\alpha})^\op \arrow{d}{\lim \Xi^{\text{aff}}_{X_\alpha}} & \arrow{l}{} (\Aff_{/X})^\op \arrow{d}{\Xi^{\text{aff}}_X} \\
\lim  \CAlg(\twoQCoh^\pst_\comp(X_\alpha))^{\text{aff}} & \arrow{l}{} \CAlg(\twoQCoh^\pst_\comp(X)) ^{\text{aff}}.
\end{tikzcd}
\]
Here the horizontal arrows are fully faithful. Furthermore, an application of \cite{HA} corollary 4.8.5.21 shows that $\Xi^{\text{aff}}_{X_\alpha}$ is an equivalence  for every $\alpha$, thus the left vertical arrow is an equivalence. It follows that $\Xi^{\text{aff}}_{X}$ is fully faithful. Assume now given an object $  \ccal$ in $\CAlg(\twoQCoh^\pst_\comp(X))^{\text{aff}}$. Then for each $\alpha$ the pullback $\ccal_\alpha$ of $\ccal$ to  $\CAlg(\twoQCoh^\pst_\comp(X))^{\text{aff}}$ is of the form $(f_\alpha)_* \QCoh^\cn_{Z_\alpha}$ for some map of affine schemes $f_\alpha: Z_\alpha \rightarrow X_\alpha$. The maps $f_\alpha$ assemble together into an affine schematic map $f: Z \rightarrow X$ with the property that $f_* \QCoh^\cn_Z = \ccal$. We now deduce that $\ccal$ belongs to the image of $\Xi^{\text{aff}}_X $ by an application of lemma \ref{lemma recognize affine}.

This finishes the proof in the case when the diagonal of $X$ is affine. We now address the general case.  For each $\alpha$ let $\CAlg(\Groth_\comp)^{\text{qaff}}_{\QCoh(X_\alpha)^\cn /}$ be the full subcategory of $\CAlg(\Groth_\comp)_{\QCoh(X_\alpha)^\cn /}$ on those objects $F: X_\alpha \rightarrow \ccal$ such that $\ccal = \QCoh(Z)^\cn$ for some quasi-affine scheme. We denote by $\CAlg(\twoQCoh^\pst_\comp(X_\alpha))^{\text{qaff}}$ the corresponding full subcategory of $\CAlg(\twoQCoh^\pst_\comp(X_\alpha))$. Let $\QAff$ be the category of quasi-affine schemes and denote by 
\[
\Xi^{\text{qaff}}_{X_\alpha} : (\QAff_{/{X_\alpha}})^\op \rightarrow \CAlg(\twoQCoh^\pst_\comp(X_\alpha))^{\text{qaff}}
\]
the functor induced by $\QCoh^\cn$. Combining lemma \ref{lemma tensor affine and quasi affine} and proposition \ref{proposition pushforward functoriality} we obtain a commutative square
\[
\begin{tikzcd}
\lim (\QAff_{/X_\alpha})^\op \arrow{d}{\lim \Xi^{\text{qaff}}_{X_\alpha}} & \arrow{l}{} (\Aff_{/X})^\op \arrow{d}{\Xi^{\text{aff}}_X} \\
\lim  \CAlg(\twoQCoh^\pst_\comp(X_\alpha))^{\text{aff}} & \arrow{l}{} \CAlg(\twoQCoh^\pst_\comp(X)) ^{\text{aff}}.
\end{tikzcd}
\]
Here the horizontal arrows are fully faithful. Furthermore, an application of the affine diagonal case of the theorem shows that $\Xi^{\text{qaff}}_{X_\alpha}$ is an equivalence  for every $\alpha$, thus the left vertical arrow is an equivalence.  It follows that $\Xi^{\text{ aff}}_{X}$ is fully faithful. Assume now given an object $\ccal$ in $\CAlg(\twoQCoh^\pst_\comp(X))^{\text{aff}}$. Then for each $\alpha$ the pullback $\ccal_\alpha$ of $\ccal$ to  $\CAlg(\twoQCoh^\pst_\comp(X))^{\text{qaff}}$ is of the form $(f_\alpha)_* \QCoh^\cn_{Z_\alpha}$ for some map of quasi-affine schemes $f_\alpha: Z_\alpha \rightarrow X_\alpha$.  The maps $f_\alpha$ assemble together into a quasi-affine map $f: Z \rightarrow X$ with the property that $f_* \QCoh^\cn_Z = \ccal$. We now deduce that $\ccal$ belongs to the image of $\Xi^{\text{aff}}_X$ by an application of lemma \ref{lemma recognize affine}.
\end{proof}


\ifx\inmain\undefined
\bibliographystyle{myamsalpha}
\bibliography{References}
\fi


\section{Abelian variants}

The goal of this section is to provide variants of the results from sections \ref{section prestable}, \ref{section affineness} and \ref{section consequences} where the role of complete Grothendieck prestable categories is played by Grothendieck abelian categories. We begin in \ref{subsection Grothabelian} with a general discussion of the category $\Groth_1$ of Grothendieck abelian categories and colimit preserving functors, with a focus on the behavior of limits, colimits and tensor products. The discussion here parallels that of the prestable case from \ref{subsection groth}, with the difference that instead of (almost) compact functors we focus on strongly compact functors (that is, those functors which admit colimit preserving right adjoints).

In \ref{subsection relative tensor abelian} we study the formation of relative tensor products in $\Groth_1$. As in the prestable situation, we only consider relative tensor products over algebras which are almost rigid in the sense of \ref{subsection almost rigid}. By theorem \ref{theorem admissibility qcoh}, this applies in particular to the symmetric monoidal category $\QCoh(X)^\heartsuit$ whenever $X$ is a quasi-compact geometric stack with affine diagonal.

In \ref{subsection sheaves abelian} we give an overview of the theory of sheaves of Grothendieck abelian categories. Finally, in \ref{subsection affineness abelian} we give a proof of our affineness theorem in the abelian setting (theorem \ref{theorem affineness abelian intro}) and use it to establish our Tannaka duality result for classical geometric stacks (part (2) of theorem \ref{theorem intro tannaka}).


\subsection{The category of Grothendieck abelian categories}\label{subsection Grothabelian}

We begin with some recollections on the theory of Grothendieck abelian categories.

\begin{definition}\label{def grothendieck abelian}
A Grothendieck abelian category is an abelian category $\Ccal$ which is presentable and such that filtered colimits in $\Ccal$ are exact. We denote by $\Groth_1$ the category of Grothendieck abelian categories and colimit preserving functors.
\end{definition}

\begin{example}
Let $A$ be a ring. Then the category $\LMod_A(\Ab)$ is Grothendieck abelian.
\end{example}

\begin{example}
Let $X$ be a geometric stack. Then the category $\QCoh(X)^\heartsuit$ of quasicoherent sheaves of abelian groups on $X$ is Grothendieck abelian.
\end{example}

Two classes of morphisms in $\Groth_1$ will be of importance to us: left exact morphisms (that is, functors which preserve finite limits) and strongly compact morphisms (that is, functors whose right adjoint preserves small colimits).

\begin{notation}
We denote by $\Groth^\lex_1$ (resp. $\Groth_1^{\sc}$) the wide subcategory of $\Groth_1$ on the left exact morphisms (resp. the strongly compact morphisms). 
\end{notation}

\begin{proposition}\label{prop limits and colimits abelian} \hfill
\begin{enumerate}[\normalfont (1)]
\item The category $\Groth_1^\lex$ admits small limits, and these are preserved by the inclusions $\Groth_1^\lex \rightarrow \Groth_1 \rightarrow \cathat$.
\item  The category $\Groth_1^\sc$ admits small colimits, and these are preserved by the inclusions $\Groth_1^\sc \rightarrow \Groth_1 \rightarrow \Pr^L$.
\end{enumerate}
\end{proposition}
\begin{proof}
Item (1) is \cite{SAG} proposition C.5.4.21. We now prove (2). Let $(\Pr^L)^\radj$ be the wide subcategory of $\Pr^L$ on the functors which admit a colimit preserving right adjoint. Then it follows from \cite{HTT} theorem 5.5.3.18 that $(\Pr^L)^\radj$ admits small colimits, which are preserved by the inclusion into $\Pr^L$. It now suffices to show that if $\ccal_\alpha$ is a diagram in $\Groth_1$ with strongly compact transitions then its colimit in $\Pr^L$ is Grothendieck abelian. This colimit is turned into a limit in $\cathat$ after passing to right adjoints, so the desired claim follows from (1).
\end{proof}

We have the following commutation property of limits and colimits in $\Groth_1$, which parallels proposition \ref{proposition commute limits and colimits}.

\begin{proposition}\label{proposition commute limits and colimits abelian}
Let $\Ical, \Jcal$ be small categories and let $F: \Ical \times \Jcal \rightarrow \Groth_1$ be a diagram. Assume the following:
\begin{enumerate}[\normalfont (a)]
\item For every $\alpha \rightarrow \alpha'$ in $\Ical$ and every $\beta$ in $\Jcal$ the map $F(\alpha, \beta) \rightarrow F(\alpha', \beta)$ is left exact.
\item For every $\alpha$ in $\Ical$ and every $\beta \rightarrow \beta'$ in $\Jcal$ the map $F(\alpha, \beta) \rightarrow F(\alpha, \beta')$ is strongly compact.
\item For every $\alpha \rightarrow \alpha'$ in $\Ical$ and every $\beta \rightarrow \beta'$ in $\Jcal$ the commutative square
\[
\begin{tikzcd}
F(\alpha, \beta) \arrow{r}{} \arrow{d}{} & F(\alpha, \beta') \arrow{d}{} \\
F(\alpha', \beta) \arrow{r}{} & F(\alpha', \beta')
\end{tikzcd}
\]
is horizontally right adjointable.
\end{enumerate}
 For each $\alpha$ in $\Ical$ let $F(\alpha, \ast) =\colim_{\Jcal} F(\alpha, \beta)$, and for each $\beta$ in $\Jcal$ let $F(\ast, \beta) = \lim_{\Ical} F(\alpha, \beta)$. Then:
\begin{enumerate}[\normalfont (1)]
\item For each $\alpha \rightarrow \alpha'$ in $\Ical$ the map $F(\alpha, \ast) \rightarrow F(\alpha', \ast)$ is left exact.
\item For each $\beta \rightarrow \beta'$ in $\Jcal$ the map $F(\ast, \beta) \rightarrow F(\ast, \beta')$ is strongly compact.
\item The canonical functor $\colim_{\Jcal} F(\ast, \beta) \rightarrow \lim_{\Ical} F(\alpha, \ast)$ is an equivalence.
\end{enumerate}
\end{proposition}
\begin{proof}
Assumption (c) implies that we may obtain a functor $\Ical \times \Jcal^\op \rightarrow \Groth_1$ by passage to right adjoints of $F$ in the second variable. For each $\alpha \rightarrow \alpha'$ in $\Ical$ the functor $F(\alpha, \ast) \rightarrow F(\alpha', \ast)$ is the limit of the functors $F(\alpha, \beta) \rightarrow F(\alpha', \beta)$. Each of these is left exact by (a), so we deduce that (1) holds. Furthermore, for each $\beta \rightarrow \beta'$ in $\Jcal$ the functor $F(\ast, \beta) \rightarrow F(\ast, \beta')$ has a right adjoint given by the limit of the right adjoints to the functors $F(\alpha, \beta) \rightarrow F(\alpha, \beta')$, so we deduce that (2) holds.  It remains to address (3). Let $F(\ast, \ast) = \colim_{\Jcal} F(\ast, \beta)$ and note that all the values of $F$ fit (after passage to right adjoints in the second variable) into a functor $\Ical^\lhd \times (\Jcal^\rhd)^\op \rightarrow \Groth_1$. We now have
\[
F(\ast, \ast) = \lim_{\Jcal^\op} F(\ast, \beta) = \lim_{\Jcal^\op} \lim_{\Ical} F(\alpha, \beta) = \lim_{\Ical} \lim_{\Jcal^\op} F(\alpha, \beta) = \lim_\Ical F(\alpha, \ast)
\]
as desired.
\end{proof}

There is a good theory of tensor products of Grothendieck abelian categories:

\begin{theorem}[\cite{SAG} theorem C.5.4.16, \cite{Tensor} theorem 5.4]\label{teo tensor product abelian}
Let $\Ccal, \Dcal$ be Grothendieck abelian categories. Then their tensor product $\Ccal \otimes \Dcal$ (formed in $\Pr^L$) is Grothendieck abelian.
\end{theorem}

It follows from theorem \ref{teo tensor product abelian} that $\Groth_1$ inherits a symmetric monoidal structure from the  category $\Mod_{\Ab}(\Pr^L)$ of classical additive categories. This restricts to a symmetric monoidal structure on $\Groth^\lex_1$ and $\Groth^\sc_1$:

\begin{proposition}\label{prop tensor y sc y lex abelian}
Let $f: \ccal \rightarrow \ccal'$ be a left exact (resp. strongly compact) morphism in $\Groth_1$. Then for every Grothendieck abelian category $\dcal$ the morphism $f \otimes \id: \ccal \otimes \dcal \rightarrow \ccal' \otimes \dcal$ is left exact (resp. strongly compact).
\end{proposition}
\begin{proof}
The case of flatness is \cite{ExponentiableGroth} theorem 3.11 (see also \cite{Fully} proposition 2.2.23 for a relative version). The case of strong compactness follows from the fact that the symmetric monoidal structure on $\Pr^L$ is compatible with its $2$-categorical structure.  
\end{proof}

\begin{proposition}  \label{proposition tensor product compatible with colimits abelian}
Let $\ccal$ be a Grothendieck abelian category. Then the composite functor
\[
\Groth^{\sc}_1 \hookrightarrow \Groth_1 \xrightarrow{- \otimes \ccal} \Groth_1
\]
preserves small colimits.
\end{proposition}
\begin{proof}
Follows from proposition \ref{proposition commute limits and colimits} using the fact that the symmetric monoidal structure on $\Pr^L$ is compatible with small colimits.
\end{proof}


\subsection{Relative tensor products}\label{subsection relative tensor abelian}

We now discuss the formation of relative tensor products in $\Groth_1$.

\begin{definition}
We say that an algebra $\Acal$ in $\Groth_1$ is almost rigid if it is almost rigid in the sense of definition \ref{definition almost rigid} when considered as an algebra in the canonical $2$-categorical enhancement of $\Groth_1$.
\end{definition}

\begin{remark}\label{remark almost rigidity in groth1}
Unwinding the definitions, an algebra $\acal$ in $\Groth_1$ is almost rigid if and only if the following conditions are satisfied:
\begin{enumerate}[\normalfont (a)]
\item The functor $\mu: \Acal \otimes \acal \rightarrow \acal$ admits a colimit preserving right adjoint.
\item The commutative square of categories
\[
\begin{tikzcd}[column sep = large]
 \Acal \otimes \acal \otimes \acal  \arrow{d}{ \mu \otimes \id } \arrow{r}{ \id \otimes \mu } &  \acal \otimes \acal   \arrow{d}{ \mu } \\
 \acal \otimes \acal   \arrow{r}{ \mu } & \acal 
\end{tikzcd}
\]
is both horizontally and vertically right adjointable.
\end{enumerate}
\end{remark}

\begin{remark}
The inclusion $\Groth_1 \rightarrow \Pr^L$ may be enhanced to a fully faithful non-unital symmetric monoidal functor of symmetric monoidal $2$-categories. It follows from this that an algebra $\acal$ in $\Groth_1$ is almost rigid if and only if it is rigid as an algebra in $\Pr^L$. Combining with theorem \ref{theorem admissibility qcoh} we deduce that $\QCoh(X)^\heartsuit$ is an almost rigid algebra in $\Groth_1$ for every quasi-compact geometric stack $X$ with affine diagonal.
\end{remark}

As a consequence of theorem \ref{theo modules over almost rigid algebras}  we obtain the following:

\begin{theorem}\label{theorem modules over almost rigid abelian}
Let $\acal$ be an almost rigid algebra in $\Groth_1$ and let $\Mcal$ be a left $\acal$-module in $\Groth_1$. Then:
\begin{enumerate}[\normalfont (1)]
\item The action map $\mu_{\Mcal}: \acal \otimes \Mcal \rightarrow \Mcal$ is strongly compact
\item Let $f: \Mcal \rightarrow \Ncal$ be a morphism of left $\acal$-modules in $\Groth_1$. Assume that $f$ is strongly compact. Then the commutative square 
\[
\begin{tikzcd}[column sep = large]
 \acal \otimes  \Mcal  \arrow{r}{\mu_{\Mcal}} \arrow{d}{ \id \otimes f } & \Mcal \arrow{d}{f} \\
 \Acal \otimes \Ncal \arrow{r}{ \mu_{\Ncal}}  & \Ncal
\end{tikzcd}
\]
is vertically right adjointable.
\item Let $f: \Mcal \rightarrow \Ncal$ be a morphism of left $\acal$-modules in $\Groth_1$.  Then the commutative square 
\[
\begin{tikzcd}[column sep = large]
 \acal \otimes  \Mcal  \arrow{r}{\mu_{\Mcal}} \arrow{d}{\id \otimes f} & \Mcal \arrow{d}{f} \\
 \Acal \otimes \Ncal \arrow{r}{ \mu_{\Ncal}}  & \Ncal
\end{tikzcd}
\]
is horizontally right adjointable.
\end{enumerate}
\end{theorem}

\begin{notation}
Let $\acal$ be a algebra in $\Groth_1$, and let $\Mcal, \Ncal$ be a pair of a right and a left $\acal$-module in $\Groth_1$. We will denote by $\Bar_\acal(\Mcal, \Ncal)_\bullet$ the corresponding Bar construction.  
\end{notation}

\begin{corollary}\label{coro face maps right adjointable}
Let $\acal$ be an almost rigid algebra in $\Groth_1$, and let $\Mcal, \Ncal$ be a pair of a right and a left $\acal$-module in $\Groth_1$. Then every face map of $\Bar_{\acal}(\Mcal, \Ncal)_\bullet$ is strongly compact.
\end{corollary}
\begin{proof}
Follows from theorem \ref{theorem modules over almost rigid abelian} (and its opposite), since every face map of  $\Bar_{\acal}(\Mcal, \Ncal)_\bullet$ is  the action map for some left or right $\acal$-module in $\Groth_1$.
\end{proof}

\begin{corollary}\label{corollary tensor product over almost rigid abelian}
Let $\acal$ be an almost rigid commutative algebra in $\Groth_1$. Then:
\begin{enumerate}[\normalfont (1)]
\item  $\Mod_{\acal}(\Groth_1)$ has a symmetric monoidal structure where the tensor product of a pair of modules $\Mcal$ and $\Ncal$ is given by the geometric realization of $\Bar_{\acal}(\Mcal, \Ncal)_\bullet$.
\item Let $f: \acal \rightarrow \acal'$ be a morphism of  commutative algebras in $\Groth_1$, and assume that $\acal'$ is almost rigid. Then there is a symmetric monoidal extension of scalars functor $\Mod_{\acal}(\Groth_1) \rightarrow \Mod_{\acal'}(\Groth_1)$ which sends each $\acal$-module $\Mcal$ to the geometric realization of $\Bar_{\acal}(\Mcal, \Ncal)_\bullet$.
\end{enumerate}
\end{corollary}
\begin{proof}
Follows from corollary \ref{coro face maps right adjointable} in light of propositions \ref{prop limits and colimits abelian} and \ref{proposition tensor product compatible with colimits abelian}.
\end{proof}

\begin{notation}
Let $\acal$ be an almost rigid commutative  algebra in $\Groth_1$. For each pair of $\acal$-modules $\Mcal, \Ncal$ in $\Groth_1$ we denote by $\Mcal \otimes_{\acal} \Ncal$ their tensor product in $\Mod_{\acal}(\Groth_1)$. We note that this is compatible with the usual notation for relative tensor products in $\Pr^L$ since the inclusion $\Mod_\Acal(\Groth_1) \rightarrow \Mod_\acal(\Pr^L)$ is symmetric monoidal.
\end{notation}

We now discuss how the formation of relative tensor products interacts with various properties of morphisms.

\begin{definition}
Let $\acal$ be an almost rigid algebra in $\Groth_1$ and let $f: \Mcal \rightarrow \Mcal'$ be a morphism of left $\acal$-modules. We say that $f$ is left exact (resp. strongly compact) if the underlying functor of Grothendieck abelian categories is left exact (resp. strongly compact). 
\end{definition}

\begin{proposition}\label{proposition relative tensor strongly compact abelian}
Let $\acal$ be an almost rigid commutative algebra in $\Groth_1$. Let $f: \Mcal \rightarrow \Mcal'$ be a strongly  compact morphism of $\acal$-modules in $\Groth_1$, and let $\Ncal$ be an $\acal$-module in $\Groth_1$. Then the induced map  $f \otimes \id : \Mcal \otimes_{\acal} \Ncal \rightarrow \Mcal' \otimes_\acal \Ncal$ is strongly compact.
\end{proposition}
\begin{proof}
 By proposition \ref{prop tensor y sc y lex abelian}  and corollary \ref{coro face maps right adjointable} we have that 
 \[
 \Bar_{\acal}(f, \Ncal)_{\bullet}: \Bar_{\acal}(\Mcal, \Ncal)_{\bullet} \rightarrow \Bar_{\acal}(\Mcal', \Ncal)_{\bullet}
 \]
is a levelwise strongly compact morphism of simplicial objects of $\Groth_1$ with strongly compact face maps. The fact that the geometric realization of $ \Bar_{\acal}(f, \Ncal)_{\bullet}$ is strongly compact follows from proposition \ref{prop limits and colimits abelian}.
\end{proof}

\begin{proposition}\label{proposition relative tensor left exact abelian}
Let $\acal$ be an almost rigid commutative algebra in $\Groth_1$. Let $f: \Mcal \rightarrow \Mcal'$ be a left exact morphism of $\acal$-modules in $\Groth_1$, and let $\Ncal$ be an $\acal$-module in $\Groth_1$. Then the induced map  $f \otimes  \id : \Mcal \otimes_{\acal} \Ncal \rightarrow \Mcal' \otimeshat_\acal \Ncal$ is left exact.
\end{proposition}
\begin{proof}
 By proposition  \ref{prop tensor y sc y lex abelian}  and corollary  \ref{coro face maps right adjointable}  we have that 
 \[
 \Bar_{\acal}(f, \Ncal)_{\bullet}: \Bar_{\acal}(\Mcal, \Ncal)_{\bullet} \rightarrow \Bar_{\acal}(\Mcal', \Ncal)_{\bullet}
 \]
is a levelwise left exact morphism of simplicial objects of $\Groth_1$ with strongly compact face maps.  To show that $f \otimes  \id$ is left exact it will suffice to show that   $\Bar_{\acal}(f, \Ncal)_{\bullet}$ admits a left exact geometric realization in $\Groth_1$. By proposition \ref{proposition commute limits and colimits abelian} it will be enough to show that for every face map $[n] \rightarrow [n+1]$ the commutative square
 \[
 \begin{tikzcd}
 \Bar_{\acal}(\Mcal, \Ncal)_{n+1} \arrow{r}{} \arrow{d}{} & \Bar_{\acal}(\Mcal, \Ncal)_{n } \arrow{d}{} \\
 \Bar_{\acal}(\Mcal', \Ncal)_{n+1}\arrow{r}{} & \Bar_{\acal}(\Mcal', \Ncal)_{n} 
 \end{tikzcd} 
 \]
 is horizontally right adjointable. The above square has the form
 \[
 \begin{tikzcd}[column sep = large]
 \acal \otimes  \ccal \arrow{r}{  \mu_\ccal } \arrow{d}{ \id \otimes g } &  \ccal  \arrow{d}{ g } \\
 \acal \otimes  \ccal' \arrow{r}{ \mu_{\ccal'} } &  \ccal' 
 \end{tikzcd}
 \]
 where $g: \ccal \rightarrow \ccal'$ is a morphism of $\acal$-modules in $\Groth_1$. The desired claim now follows from part (3) of theorem \ref{theorem modules over almost rigid abelian}.
\end{proof}

We finish this section with two propositions concerning the compatibility of relative tensor products with colimits and limits.

\begin{proposition}\label{proposition colimits in Modabelian}
Let $\acal$ be an almost rigid commutative algebra in $\Groth_1$, and denote by $\Mod_{\acal}(\Groth_1)^{\sc}$ the wide subcategory of $\Mod_{\acal}(\Groth_1)^{\sc}$ on the strongly compact morphisms.
\begin{enumerate}[\normalfont (1)]
\item The category $\Mod_{\acal}(\Groth_1)^{\sc}$ admits small colimits, which are preserved by the inclusion $\Mod_{\acal}(\Groth_1)^{\sc} \rightarrow \Mod_{\acal}(\Groth_1)$ and the forgetful functor to $\Groth_1$.
\item For every object $\ecal$ in $\Mod_{\acal}(\Groth_1)$ the composite functor
\[
\Mod_{\acal}(\Groth_1)^{\sc} \hookrightarrow \Mod_{\acal}(\Groth_1) \xrightarrow{\ecal \otimes_\acal -} \Mod_{\acal}(\Groth_1)
\]
preserves small colimits.
\end{enumerate}
\end{proposition}
\begin{proof}
Item (1) follows from proposition \ref{proposition tensor product compatible with colimits abelian}, by \cite{HA} corollary 4.2.3.5. We now prove item (2). Let $\ccal_\alpha$ be a diagram in $\Mod_{\acal}(\Groth_1)^{\sc}$ with colimit $\ccal$. Applying \cite{HA} corollary 4.2.3.5 once more, we may reduce to showing that  for every $\dcal$ in $\Groth_1$ we have that $\dcal \otimes  (\ecal \otimes_\acal \ccal)$ is the colimit of $ \dcal \otimes  (\ecal \otimes_\acal \ccal_\alpha)$ in $\Groth_1$. Combining corollary \ref{coro face maps right adjointable} with proposition \ref{proposition tensor product compatible with colimits abelian} we reduce to showing that $| \dcal \otimes   \Bar_{\acal}(\ecal, \ccal)_\bullet|$ is the colimit of $|\dcal \otimes \Bar_{\acal}(\ecal, \ccal_\alpha)_\bullet|$ in $\Groth_1$. To do so it is enough to prove that $ \dcal \otimes   \Bar_{\acal}(\ecal, \ccal)_n$ is the colimit of $\dcal \otimes  \Bar_{\acal}(\ecal, \ccal_\alpha)_n$ in $\Groth_1$ for every $n \geq 0$. This follows from proposition  \ref{proposition tensor product compatible with colimits abelian}.
\end{proof}

\begin{proposition}\label{proposition relative tensor y limits abelian}
Let $\acal$ be an  almost rigid commutative algebra in $\Groth_1$. Let $(\Mcal_\alpha)$ be a diagram of $\acal$-modules in $\Groth_1$  with left exact transitions. Let $\Ncal$ be an $\acal$-module in $\Groth_1$, and assume that $\Ncal$ is dualizable as an object of $\Mod_{\Acal}(\Pr^L)$. Then the canonical map $(\lim \Mcal_\alpha) \otimes_{\acal} \Ncal  \rightarrow \lim (\mcal_\alpha \otimes_{\acal} \Ncal)$ is an equivalence.
\end{proposition}
\begin{proof}
Follows  from proposition \ref{prop limits and colimits abelian}, using the fact that the inclusion  $\Mod_{\Acal}(\Groth_1) \rightarrow \Mod_{\Acal}(\Pr^L)$ is symmetric monoidal.
\end{proof}


\subsection{Sheaves of Grothendieck abelian categories}\label{subsection sheaves abelian}

 We now give an overview of the theory of  sheaves of Grothendieck abelian categories, as introduced in \cite{SAG} chapter 10.
 
 \begin{notation}
 For each connective commutative ring spectrum $R$ we let 
 \[
 \Groth_{1,R} = \Mod_{\Mod_R^\heartsuit}(\Groth_1).
  \]
  Objects of $\Groth_{1,R}$ are called $R$-linear Grothendieck abelian categories.  As discussed in \cite{SAG} proposition D.2.2.1, if $R$ is a connective commutative ring spectrum then $\Groth_{1,R}$ is  closed under tensor products inside $\Mod_{\Mod_R^\heartsuit}(\Pr^L)$. In particular, $\Groth_{1,R}$ inherits a symmetric monoidal structure from $\Mod_{\Mod_R^\heartsuit}(\Pr^L)$. We denote by 
  \[
  - \otimes_R - : \Groth_{1,R} \times \Groth_{1,R} \rightarrow \Groth_{1,R}
  \]
  the corresponding tensor product functor. Furthermore for each morphism of connective commutative ring spectra $R \rightarrow S$ we have an extension of scalars functor which will be denoted by $- \otimes_R S : \Groth_{1,R} \rightarrow \Groth_{1,S}$.
  
 The assignment $R \mapsto \Groth_{1,R}$ assembles into a functor $\CAlg^\cn \rightarrow \CAlg(\cathat)$ which satisfies fpqc descent (\cite{SAG} corollary D.6.8.4). Let 
 \[
 \twoQCoh^\ab : \PreStk^\op \rightarrow \CAlg(\cathat)
 \]
be  its right Kan extension along the inclusion $\CAlg^\cn = \Aff^\op \rightarrow \PreStk^\op$. For each prestack $X$ we call $\twoQCoh^\ab(X)$ the category of quasicoherent sheaves of Grothendieck abelian categories on $X$. For each map $f: X \rightarrow Y$ we denote by $f^*: \twoQCoh^\ab(Y) \rightarrow \twoQCoh^\ab(X)$ the induced pullback functor.
 \end{notation}
 
The notion of quasicoherent sheaf of Grothendieck abelian categories globalizes the notion of $R$-linear Grothendieck abelian category. The classes of left exact and strongly compact morphisms also admit a globalization.

\begin{proposition}
Let $R \rightarrow S$ be a morphism of connective commutative ring spectra and let $f: \ccal \rightarrow \dcal$ be a morphism in $\Groth_{1,R}$. If $f$ is left exact (resp. strongly compact) then $f \otimes_R S : \ccal \otimes_R S \rightarrow \dcal \otimes_R S$ is left exact (resp. strongly compact).
\end{proposition}
\begin{proof}
Follows from propositions \ref{proposition relative tensor strongly compact abelian} and \ref{proposition relative tensor left exact abelian}.
\end{proof}
 
 \begin{definition}\label{definition right adjointability global}
 Let $X$ be a prestack and let $f: \ccal \rightarrow \dcal$ be a morphism in $\twoQCoh^\ab(X)$. We say that $f$ is left exact (resp. strongly compact) if for every connective commutative ring spectrum $R$ and every map $\eta: \Spec(R) \rightarrow X$ the functor of Grothendieck abelian categories underlying the map $\eta^*f: \eta^* \ccal \rightarrow \eta^*\dcal$  is left exact (resp. strongly compact).
 \end{definition}

 \begin{notation}
Let $X$ be a prestack. We denote by $\twoQCoh^{\ab, \lex}(X)$ (resp. $\twoQCoh^{\ab, \sc}(X)$) the wide subcategory of $\twoQCoh^\ab(X)$ on the left exact (resp. strongly compact) morphisms.  
 \end{notation}
 
 \begin{proposition}\label{prop tensor left exact y compact global abelian}
 Let $X$ be a prestack. Let $f: \ccal \rightarrow \ccal'$ be a morphism in $\twoQCoh^\ab(X)$, and let $\dcal$ be an object in $\twoQCoh^\ab(X)$. If $f$ is left exact (resp. strongly compact) then  $f \otimes \id : \ccal \otimes \dcal \rightarrow \ccal' \otimes \dcal$ is left exact (resp. strongly compact).
 \end{proposition}
 \begin{proof}
 It suffices to address the case when $X = \Spec(R)$ is an affine scheme, in which case the proposition follows from propositions \ref{proposition relative tensor strongly compact abelian} and \ref{proposition relative tensor left exact abelian}.
 \end{proof}
 
 \begin{proposition}\label{proposition limits y colimits globalized abelian}
  Let $X$ be a prestack.
 \begin{enumerate}[\normalfont (1)]
 \item The category $\twoQCoh^{\ab, \lex}(X)$ admits small limits, which are preserved by the inclusion into $\twoQCoh^\ab(X)$. Furthermore, for every morphism of prestacks $f: X \rightarrow Y$  the pullback functor $f^*: \twoQCoh^{\ab, \lex}(Y) \rightarrow \twoQCoh^{\ab, \lex}(X)$ preserves small limits.
 \item The category $\twoQCoh^{\ab, \sc}$ admits small colimits, which are preserved by the inclusion into $\twoQCoh^\ab(X)$. Furthermore, for every morphism $f: X \rightarrow Y$ the pullback functor $f^*: \twoQCoh^{\ab, \sc}(Y) \rightarrow \twoQCoh^{\ab, \sc}(X)$ preserves small colimits.
 \end{enumerate}
 \end{proposition}
 \begin{proof}
 For each morphism $R \rightarrow S$ of connective commutative ring spectra, we have that $\Mod_S^\heartsuit$ is a self dual object of $\Groth_{1,R}$, and consequently the functor $- \otimes_R S : \Groth_{1,R} \rightarrow \Groth_{1,S}$ preserves all limits and colimits that exist on the source. We may thus reduce to showing that $\Groth_{1,R}$ admits limits of diagrams with left exact transitions, and colimits of diagrams with strongly compact transitions. This follows from a combination of propositions \ref{prop limits and colimits abelian} and \ref{proposition colimits in Modabelian}.
  \end{proof}

\begin{proposition}
Let $X$ be a prestack and let $\ccal$ be an object in $\twoQCoh^{\ab}(X)$. Then the composite functor
\[
\twoQCoh^{\ab, \sc}(X) \hookrightarrow \twoQCoh^{\ab}(X) \xrightarrow{\ccal \otimes - } \twoQCoh^{\ab}(X)
\]
preserves small colimits.
\end{proposition}
\begin{proof}
By proposition \ref{proposition limits y colimits globalized abelian} it suffices to address the case when $X = \Spec(R)$ is an affine scheme. In this case the desired assertion follows from proposition \ref{proposition colimits in Modabelian}.
\end{proof}
 
We finish by recording the following basic result concerning the pushforward functoriality of $\twoQCoh^\ab$ on geometric stacks:

\begin{proposition}\label{proposition pushforward functoriality abelian}
Let $f: X \rightarrow Y$ be a morphism of geometric stacks. Then the functor $f^*: \twoQCoh^\ab(Y) \rightarrow \twoQCoh^\ab(X)$ admits a right adjoint $f_*: \twoQCoh^\ab(X) \rightarrow \twoQCoh^\ab(Y)$. Furthermore, for every cartesian square of geometric stacks
\[
\begin{tikzcd}
X' \arrow{d}{f'} \arrow{r}{g'} & X \arrow{d}{f} \\
Y' \arrow{r}{g} & Y
\end{tikzcd}
\]
the induced commutative square of categories
\[
\begin{tikzcd}
\twoQCoh^\ab(X' )& \twoQCoh^\ab(X ) \arrow{l}[swap]{g'^*} \\
\twoQCoh^\ab(Y' ) \arrow{u}[swap]{f'^*} & \arrow{u}[swap]{f^*} \arrow{l}[swap]{g^*}  \twoQCoh^\ab(Y )
\end{tikzcd}
\]
is vertically right adjointable. 
\end{proposition}
\begin{proof}
Analogous to the proof of proposition \ref{proposition pushforward functoriality}.
\end{proof}


\subsection{\texorpdfstring{$\twoQCoh^{\Ab}$}{2QCohAb}-affineness and Tannaka duality}\label{subsection affineness abelian}

Our next goal is to establish a variant of theorem \ref{theorem affineness} that applies to quasicoherent sheaves of Grothendieck abelian categories.

\begin{construction}\label{construction Phi abelian} 
Let $\Gcal$ be the full subcategory of $\PreStk$ on the quasi-compact geometric stacks with affine diagonal. It follows from  corollary \ref{corollary tensor product over almost rigid abelian} and theorem \ref{theorem admissibility qcoh} that the assignment $X \mapsto \Mod_{\QCoh(X)^\heartsuit}(\Groth_1)$ gives rise to a functor 
\[
\Mod_{\QCoh(-)^\heartsuit}(\Groth_1): \Gcal^\op \rightarrow \CAlg(\cathat).
\] 
The right Kan extension of $\Mod_{\QCoh(-)^\heartsuit}(\Groth_1)|_{\Aff^\op}$ along the inclusion $\Aff^\op \rightarrow \Gcal^\op$ recovers the restriction to $\mathcal{G}^\op$ of the functor $\twoQCoh^\ab$. It follows that for each object $X$ of $\Gcal$ we have a symmetric monoidal functor
\[
\Phi_X : \Mod_{\QCoh(X)^\heartsuit}(\Groth_1) \rightarrow \twoQCoh^\ab(X).
\]
\end{construction}

\begin{theorem}\label{theorem affineness abelian}
Let $X$ be a quasi-compact geometric stack with affine diagonal. Then the functor 
\[
\Phi_X : \Mod_{\QCoh(X)^\heartsuit}(\Groth_1) \rightarrow \twoQCoh^\ab(X) 
\]
from construction \ref{construction Phi abelian} is an equivalence.
\end{theorem}

Theorem \ref{theorem affineness abelian} admits a reformulation in terms of the functor of global sections.

\begin{notation}\label{notation gamma enh abelian}
Let $X$ be a geometric stack. We denote by
\[
\Gamma(X, -): \twoQCoh^\ab(X) \rightarrow \twoQCoh^\ab(\Spec(\mathbb{S})) = \Groth_1
\]
the functor of pushforward along the projection $X \rightarrow \Spec(\mathbb{S})$ (see proposition \ref{proposition pushforward functoriality abelian}). We equip $\Gamma(X, -)$ with its canonical lax symmetric monoidal structure. Let $\QCoh_X^\heartsuit$ be the unit of $\twoQCoh^\ab(X)$, and observe that we have an equivalence of symmetric monoidal categories $\Gamma(X, \QCoh_X^\heartsuit) = \QCoh(X)^\heartsuit$. We denote by
\[
\Gamma^\enh(X, -): \twoQCoh^\ab(X)  = \Mod_{\QCoh_X^\heartsuit}(\twoQCoh^\ab(X)) \rightarrow \Mod_{\QCoh(X)^\heartsuit}(\Groth_1)
\]
the induced functor.
\end{notation}

\begin{corollary}
Let $X$ be a quasi-compact geometric stack with affine diagonal. Then the functor
\[
\Gamma^\enh(X, -): \twoQCoh^\ab(X) \rightarrow \Mod_{\QCoh(X)^\heartsuit}(\Groth_1)
\]
from notation \ref{notation gamma enh abelian} is an equivalence.
\end{corollary}
\begin{proof}
This follows from theorem \ref{theorem affineness abelian} using the fact that the functor
\[
\Mod_{\QCoh(X)^\heartsuit}(\Groth_1) = \Mod_{\QCoh(X)^\heartsuit}(\Mod_{\QCoh(X)^\heartsuit}(\Groth_1)) \rightarrow \Mod_{\QCoh(X)^\heartsuit}(\Groth_1)
\]
induced from the lax symmetric monoidal structure on the forgetful functor 
\[
\Mod_{\QCoh(X)^\ab}(\Groth_1) \rightarrow \Groth_1,
\]
 is an equivalence.
\end{proof}

Before going into the proof of theorem \ref{theorem affineness abelian}, we indicate how it can be used to deduce a Tannaka duality result for classical geometric stacks.

\begin{theorem}\label{theorem tannaka abelian}
Let $X$ and $Y$ be classical geometric stacks, and assume that $X$ is quasi-compact and has affine diagonal. Then the assignment $f \mapsto f^*$ provides an equivalence
\[
\Hom_{\Stk}(Y, X) = \Hom_{\CAlg(\Pr^L)}(\QCoh(X)^\heartsuit, \QCoh(Y)^\heartsuit).
\]
\end{theorem}

It is possible to give a proof of theorem \ref{theorem tannaka abelian} using theorem \ref{theorem affineness abelian} along the lines of our second proof of theorem \ref{theorem tannaka}. Here we will instead use theorem \ref{theorem affineness abelian} to verify the tameness conditions in Lurie's abelian Tannaka duality:

\begin{proof}[Proof of theorem \ref{theorem tannaka abelian}]
Both sides are limit preserving in $Y$ so we may reduce to the case when $Y$ is the spectrum of a (classical) commutative ring. By \cite{SAG} theorem 9.7.0.1, the assignment $f \mapsto f^*$ is an embedding, so we only need to verify surjectivity. In other words, we have to show that if  $F: \QCoh(X)^\heartsuit \rightarrow \QCoh(Y)^\heartsuit$ is a colimit preserving functor then the following conditions are satisfied:
\begin{enumerate}[\normalfont (i)]
\item $F$ sends flat sheaves to flat sheaves.
\item Let $0 \rightarrow \Fcal' \rightarrow \Fcal \rightarrow \Fcal'' \rightarrow 0$ be an exact sequence in $\QCoh(X)^\heartsuit$ such that $\Fcal''$ is flat. Then the sequence $0 \rightarrow F(\Fcal') \rightarrow F(\Fcal) \rightarrow F(\Fcal'') \rightarrow 0$ is exact.
\end{enumerate}
We regard $\QCoh(Y)^\heartsuit$ as a $\QCoh(X)^\heartsuit$-module in $\Groth_1$ by restriction of scalars.  By theorem \ref{theorem affineness abelian} we have that $\QCoh(Y)^\heartsuit$ belongs to the image of $\Gamma(X,-)$, and in particular it satisfies conditions $(\ast)$ and $(\ast')$ from \cite{SAG} theorem 10.6.2.1. It follows from $(\ast)$ that for every flat sheaf $\Fcal$ in $\QCoh(X)^\heartsuit$ the functor
\[
F(\Fcal) \otimes -: \QCoh(Y)^\heartsuit \rightarrow \QCoh(Y)^\heartsuit
\]
is left exact, so we see that (i) holds. Similarly,  if $0 \rightarrow \Fcal' \rightarrow \Fcal \rightarrow \Fcal'' \rightarrow 0$ is as in (ii) we have that 
\[
0 \rightarrow F(\Fcal') \rightarrow F(\Fcal) \rightarrow F(\Fcal'') \rightarrow 0
\]
is exact by an application of $(\ast')$ with $E = \Ocal_Y$, so we see that (ii) holds.
\end{proof}

We devote the remainder of this section to the proof of theorem \ref{theorem affineness abelian}.

\begin{lemma}
Let $X$ be a quasi-compact geometric stack with  affine diagonal. Then the functor $\Gamma^\enh(X, -)$ from notation \ref{notation gamma enh abelian} is right adjoint to $\Phi_X$.
\end{lemma}
\begin{proof}
Analogous to the proof of lemma \ref{lemma Gammaenh right adjoint}.
\end{proof}

\begin{lemma}\label{lemma Gamma y colimits abelian}
Let $X$ be a geometric stack. 
\begin{enumerate}[\normalfont(1)]
\item The functor  $\Gamma^\enh(X, -)$ sends strongly compact arrows to strongly compact arrows. 
\item Let $\ccal_\beta$ be a diagram in $\twoQCoh^{\ab}(X)$ with strongly compact transitions. Then $\Gamma^\enh(X, -)$ preserves the colimit of $\ccal_\beta$.
\end{enumerate}
\end{lemma}
\begin{proof}
By proposition \ref{proposition colimits in Modabelian}, it is enough to show that the lemma holds with $\Gamma^\enh(X,-)$ replaced with $\Gamma(X, -): \twoQCoh^\ab(X) \rightarrow \Groth_1$. We will show (1) and (2) simultaneously, by proving that $\Gamma(X, -)$ preserves the colimit of $\ccal_\beta$, and that the transitions in $\Gamma(X, \ccal_\beta)$ are strongly compact.

  Write $X$ as the colimit of a diagram of affine schemes $X_\alpha$ with flat transitions, and denote by $p_\alpha: X_\alpha \rightarrow X$ the projections. Then we have $\Gamma(X, \ccal_\beta) = \lim p_\alpha^* \ccal_\beta $. Let $\ccal$ be the colimit of the diagram $\ccal_\beta$ in $\twoQCoh^{\ab}(X)$. By proposition \ref{proposition limits y colimits globalized abelian} for each $\alpha$ we have that $p_\alpha^* \ccal$ is the colimit in $\Groth^{\sc}$ of $p_\alpha^*\ccal_\beta$. Using proposition \ref{proposition commute limits and colimits abelian} we deduce that the transitions in the diagram $\lim p_\alpha^* \ccal_\beta$ are strongly compact, and its colimit in $\Groth$ is given by $\lim p^*_\alpha \ccal =\Gamma(X, \ccal)$, as desired.
\end{proof}

\begin{proof}[Proof of theorem \ref{theorem affineness abelian}]
By \cite{SAG} theorem 10.6.2.1 we have that the functor
\[
\Gamma^\enh(X, -): \twoQCoh^\ab(X) \rightarrow \Mod_{\QCoh(X)^\heartsuit}(\Groth_1)
\]
 is fully faithful. To prove our theorem it will suffice to show that $\Gamma^\enh(X, -)$ is essentially surjective. We begin by showing that for every Grothendieck abelian category $\Dcal$ the free module $  \QCoh(X)^\heartsuit \otimes \dcal$  belongs to the image of $\Gamma^\enh(X, -)$.  To do so it is enough, by \cite{SAG} theorem 10.6.2.1, to verify the following conditions:
\begin{enumerate}[\normalfont (i)]
\item Let $\Gcal$ be a flat sheaf in $\QCoh(X)^\cn$ and let $\Fcal = H_0(\Gcal)$. Then the functor $\Fcal \otimes - : \QCoh(X)^\heartsuit \otimes \dcal \rightarrow \QCoh(X)^\heartsuit \otimes \dcal$ is left exact.
\item Let $0 \rightarrow \Fcal' \rightarrow \Fcal \rightarrow \Fcal'' \rightarrow 0$ be an exact sequence in $\QCoh(X)^\heartsuit$, and assume that $\Fcal'' = H_0(\Gcal)$ for some flat sheaf in $\QCoh(X)^\cn$. Then for every object $E$ in $\QCoh(X)^\heartsuit \otimes \dcal$ the sequence
\[
0 \rightarrow \Fcal' \otimes E \rightarrow \Fcal \otimes E \rightarrow \Fcal'' \otimes E \rightarrow 0
\]
is exact.
\end{enumerate}
Let $\der(\dcal)^\cn$ be the   connective derived category of $\dcal$ and note that passage to $H_0$ provides a $\QCoh(X)^\cn$-linear functor 
\[
\QCoh(X)^\cn \otimes  \der(\dcal)^\cn \rightarrow (\QCoh(X)^\cn \otimes \der(\dcal)^\cn)_{\leq 0} = \QCoh(X)^\heartsuit \otimes \dcal.
\]
 Denote by 
 \[
 - \otimes' -: \QCoh(X)^\cn \times \QCoh(X)^\cn \otimes  \der(\dcal)^\cn  \rightarrow \QCoh(X)^\cn \otimes  \der(\dcal)^\cn 
 \]
  the action map. Let $\Gcal$ be as in (i). Then we have a commutative square
 \[
 \begin{tikzcd}
 \QCoh(X)^\cn \otimes  \der(\dcal)^\cn  \arrow{r}{H_0} \arrow{d}{\Gcal \otimes' -} & \QCoh(X)^\heartsuit \otimes \dcal \arrow{d}{\Fcal \otimes -} \\
  \QCoh(X)^\cn \otimes  \der(\dcal)^\cn  \arrow{r}{H_0}  &  \QCoh(X)^\heartsuit \otimes \dcal .
 \end{tikzcd}
 \]
To prove that $\Fcal \otimes -$ is left exact it suffices to show that the left vertical arrow in the above square is left exact. This follows from the fact that $\Gcal$ is flat, by virtue of proposition  \ref{proposition tensor y compact y lex}.

We now address (ii). We have an exact sequence
\[
\Fcal' \otimes' E \rightarrow \Fcal \otimes' E \rightarrow \Fcal'' \otimes' E
\]
in $\QCoh(X)^\cn \otimes  \der(\dcal)^\cn $, which after passage to homology induces an exact sequence
\[
H_1(\Fcal'' \otimes' E) \rightarrow \Fcal' \otimes E \rightarrow \Fcal \otimes E \rightarrow \Fcal'' \otimes E \rightarrow 0
\]
in  $\QCoh(X)^\heartsuit \otimes  \dcal $. Consequently, to prove (ii) it suffices to show that $H_1(\Fcal'' \otimes' E)  = 0$. This fits into an exact sequence
\[
H_1(\Gcal \otimes' E) \rightarrow H_1(\Fcal'' \otimes' E) \rightarrow H_0(\tau_{\geq 1}(\Gcal) \otimes' E)
\]
where here the last term is $0$ since $\tau_{\geq 1}(\Gcal)$ is $1$-connective. We may thus reduce to showing that $H_1(\Gcal \otimes' E) = 0$, which once again follows from the left exactness of $\Gcal \otimes' -$.

Assume now given an arbitrary object $\ccal$ in $\Mod_{\QCoh(X)^\heartsuit}(\Groth_1)$. We may write $\ccal$ as the geometric realization of the Bar construction $\Bar_{\QCoh(X)^\heartsuit}(\QCoh(X)^\heartsuit, \ccal)_\bullet$. Each entry in this simplicial object is a free $\QCoh(X)^\heartsuit$-module and in particular belongs to the image of $\Gamma^\enh(X, -)$. Using the fully faithfulness of $\Gamma^\enh(X, -)$   we have
 \[
 \Bar_{\QCoh(X)^\heartsuit}(\QCoh(X)^\heartsuit, \ccal)_\bullet = \Gamma^\enh(X, \Phi_X(  \Bar_{\QCoh(X)^\heartsuit}(\QCoh(X)^\heartsuit, \ccal)_\bullet  ) ).
 \]
 We claim that the face maps of $ \Phi_X(  \Bar_{\QCoh(X)^\heartsuit}(\QCoh(X)^\heartsuit, \ccal)_\bullet  )$ are strongly compact. To prove this it suffices to show that for every map $\eta: \Spec(R) \rightarrow X$ the face maps of 
 \[
 \eta^* \Phi_X(  \Bar_{\QCoh(X)^\cn}(\QCoh(X)^\cn, \ccal)_\bullet  )
 \]
 are strongly compact. The simplicial Grothendieck abelian category underlying the above is given by $\Bar_{\QCoh(X)^\heartsuit}(\Mod_R^\heartsuit, \ccal)_\bullet$. The fact that the face maps are strongly compact now follows from  corollary \ref{coro face maps right adjointable}. Applying lemma \ref{lemma Gamma y colimits abelian} we have
 \begin{align*}
 \ccal &= | \Bar_{\QCoh(X)^\heartsuit}(\QCoh(X)^\heartsuit, \ccal)_\bullet| \\
  & = | \Gamma^\enh(X, \Phi_X(  \Bar_{\QCoh(X)^\heartsuit}(\QCoh(X)^\heartsuit, \ccal)_\bullet  ) )| \\
  & =  \Gamma^\enh(X, |\Phi_X(  \Bar_{\QCoh(X)^\heartsuit}(\QCoh(X)^\heartsuit, \ccal)_\bullet  )| )
 \end{align*}
 and in particular $\ccal$ belongs to the image of $\Gamma^\enh(X,-)$, as desired. 
\end{proof}
 

\ifx\inmain\undefined
\bibliographystyle{myamsalpha}
\bibliography{References}
\fi

\appendix

 
\section{Non-existence of limits and colimits in  \texorpdfstring{$\Groth$}{Groth} and  \texorpdfstring{$\Groth_1$}{Groth1}}\label{appendix limits and colimits}

As we discussed in \ref{subsection groth}, the category $\Groth$ of Grothendieck prestable categories and colimit preserving functors has the following properties:
\begin{itemize}
\item $\Groth$ admits limits of diagrams with left exact transitions, and these are preserved by the inclusion into $\Pr^L$.
\item $\Groth$ admits colimits of diagrams with almost compact transitions. Furthermore, the inclusion $\Groth \rightarrow \Pr^L$ preserves colimits of diagrams with strongly compact transitions.
\end{itemize}

Similarly, as we discussed in \ref{subsection Grothabelian} the category $\Groth_1$ of Grothendieck abelian categories and colimit preserving functors has the following features:
\begin{itemize}
\item $\Groth_1$ admits limits of diagrams with left exact transitions, and these are preserved by the inclusion into $\Pr^L$.
\item $\Groth_1$ admits colimits of diagrams with strongly compact transitions, and these are preserved by the inclusion into $\Pr^L$.
\end{itemize}

The goal of this appendix is to present some examples that illustrate the behavior of limits and colimits in $\Groth$ and $\Groth_1$ for diagrams that do not belong to the above classes. In particular, we will see that $\Groth$ and $\Groth_1$ do not admit small limits nor colimits, and that the inclusion $\Groth^\cnorm \rightarrow \Pr^L$ does not preserve small colimits. Along the way, we discuss the non-existence of Hom objects in $\Groth$ and $\Groth_1$, and present examples that exhibit the failure of Zariski descent for additive linear categories.


\subsection{Abelian examples}

We begin by collecting some examples in the abelian setting.

\begin{example}\label{example pushout abelian}
Consider the commutative square of commutative rings
\[
\begin{tikzcd}
\ZZ[x] \arrow{r}{} \arrow{d}{} & \ZZ[x, x^{-1}] \arrow{d}{} \\
\ZZ[x, (x-1)^{-1}] \arrow{r}{} & \ZZ[x, x^{-1}, (x-1)^{-1}].
\end{tikzcd}
\]
We obtain from the above a commutative square of $\ZZ[x]$-linear Grothendieck abelian categories
\[
\begin{tikzcd}
\Mod^\heartsuit_{\ZZ[x]}  & \Mod^\heartsuit_{\ZZ[x, x^{-1}] } \arrow{l}{} \\
\Mod^\heartsuit_{\ZZ[x, (x-1)^{-1}]} \arrow{u}{} &\arrow{l}{} \arrow{u}{} \Mod^\heartsuit_{ \ZZ[x, x^{-1}, (x-1)^{-1}]}
\end{tikzcd}
\]
where the maps are given by restriction of scalars. The above becomes a pushout square upon extension of scalars along the maps $\ZZ[x] \rightarrow \ZZ[x, x^{-1}]$ and $\ZZ[x] \rightarrow \ZZ[x, (x-1)^{-1}]$, and it is therefore a pushout square in $\Groth_{1, \ZZ[x]} $ by \cite{SAG} corollary D.6.8.4. This is however not a pushout in the category of $\Mod^\heartsuit_{\ZZ[x]}$-modules in $\Pr^L$: the square obtained from the above by passing to right adjoints is not a pullback, since $\ZZ[x]$ is sent to zero under the right adjoints to the top horizontal and left vertical arrows. It follows in particular that the assignment $R \mapsto  \Mod_{\Mod^\heartsuit_{R}}(\Pr^L)$ does not satisfy Zariski descent.
\end{example}

\begin{remark}\label{remark example pushout abelian}
If $R$ is a commutative ring then $\Mod_R^\heartsuit$ is a self dual object of $\Groth_1$. It follows that the forgetful functor $\Groth_{1, R} \rightarrow \Groth_1$ admits a right adjoint,  and in particular it  preserves all limits and colimits that exist in its source. It follows that example \ref{example pushout abelian} also supplies an example of a pushout square in $\Groth_1$ which is not preserved by the inclusion into $\Pr^L$.
\end{remark}

\begin{proposition}\label{proposition inclusion preserves limits that exist abelian}
The inclusion $\Groth_1 \rightarrow \Pr^L$  preserves all limits and all Hom objects that exist in $\Groth$.
\end{proposition}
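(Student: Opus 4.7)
The plan is to reduce both claims to the assertion that the corresponding $\Pr^L$-construction yields a Grothendieck abelian category. Since $\Groth_1 \hookrightarrow \Pr^L$ is fully faithful (morphisms in both are colimit-preserving functors), Yoneda then forces the $\Pr^L$-construction to agree with the $\Groth_1$-construction.

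For the limits case, let $D \colon K \to \Groth_1$ admit a limit $L$ in $\Groth_1$, and denote $L' := \lim_{\Pr^L} D$, which is presentable. The canonical comparison $L \to L'$ in $\Pr^L$ is an equivalence if and only if $L' \in \Groth_1$; in that event, the $\Pr^L$-universal property restricts to a $\Groth_1$-universal property, forcing $L' \simeq L$.

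To show $L' \in \Groth_1$ and exhibit the equivalence directly, I would construct the right adjoint $R \colon L' \to L$ of the canonical map using the right adjoints $p_i^R \colon C_i \to L$ of the projections $p_i \colon L \to C_i$; these exist and are left exact and accessible because each $p_i$ is a colimit-preserving functor between presentable abelian categories. Under the standard description of $L'$ as the limit of the diagram $\{C_i, f_{ij}^R\}$ in $\widehat{\operatorname{Cat}}_\infty$, the assignment $(Y_i)_i \mapsto \lim_i p_i^R(Y_i)$ defines $R$. The unit and counit of $F \dashv R$ are then verified to be equivalences by applying the universal property of $L$ against suitable ``representing'' Grothendieck abelian categories: for instance, $\Mod_{\ZZ}^\heartsuit$ is the free object on one generator, so the universal property of $L$ identifies its underlying groupoid of objects with $\lim_i C_i^\simeq$, matching that of $L'$; a similar representing object for arrows identifies morphism spaces.

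The argument for Hom objects proceeds analogously: if $\operatorname{Hom}_{\Groth_1}(A, B)$ exists in $\Groth_1$ and $\operatorname{Fun}^L(A, B)$ denotes the corresponding $\Pr^L$ internal Hom, the same recipe (evaluation functors and their partial adjoints in place of the projections $p_i$) shows the comparison is an equivalence. The main obstacle is verifying that unit and counit are equivalences, which requires matching not just cores but whole mapping spaces in $L$ and $L'$. Example \ref{example pushout abelian} shows the dual statement for colimits genuinely fails, so the proof must use limit-specific input: the automatic existence and left exactness of right adjoints to morphisms in $\Groth_1$.
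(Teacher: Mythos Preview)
Your overall strategy---use full faithfulness of $\Groth_1 \hookrightarrow \Pr^L$ and a Yoneda-style argument---is correct, and the idea of testing the comparison map $L \to L'$ against Grothendieck abelian ``probe'' categories is exactly what the paper does. However, your execution has a genuine gap: testing against $\Mod_{\ZZ}^\heartsuit$ (to match cores) and some unspecified ``representing object for arrows'' is not enough to conclude that a functor between presentable $\infty$-categories is an equivalence. Matching underlying groupoids and arrow spaces does not, by itself, force $F$ to be fully faithful and essentially surjective; and you yourself flag that ``matching not just cores but whole mapping spaces'' is the main obstacle, without resolving it.

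The paper closes this gap with a single clean observation you are missing: $\Pr^L$ is generated under colimits by presheaf categories $\Pcal(\Ical)$ for $\Ical$ small, so to verify that $\ccal \to \lim_{\Pr^L} \ccal_\alpha$ is an equivalence it suffices to check that
\[
\Hom_{\Pr^L}(\Pcal(\Ical), \ccal) \longrightarrow \lim_\alpha \Hom_{\Pr^L}(\Pcal(\Ical), \ccal_\alpha)
\]
is an equivalence for every $\Ical$. Since each target is $\Ab$-linear, mapping out of $\Pcal(\Ical)$ factors through $\Pcal(\Ical)\otimes\Ab = \Fun(\Ical^{\mathrm{op}},\Ab)$, which \emph{is} Grothendieck abelian; hence the displayed map is an equivalence by the $\Groth_1$-universal property of $\ccal$. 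In other words, your instinct to probe with $\Ab$ was right, but you need the whole family $\Fun(\Ical^{\mathrm{op}},\Ab)$, not just the cases $\Ical = *$ and $\Ical = \Delta^1$. The detour through an explicit right adjoint $R\colon L' \to L$ is unnecessary once you have this; the Hom-object case is then identical, replacing $\ccal_\alpha$ by $\Fun^L(\Acal, \Bcal)$-style objects. Note also that your closing remark about ``left exactness of right adjoints'' plays no role: the transitions in the diagram need not be left exact, and the paper's argument never uses any exactness hypothesis.
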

\begin{proof}
We give the proof in the case of limits - the proof for Hom objects is completely analogous. Let $\ccal$ be the limit in $\Groth_1$ of a diagram $\ccal_\alpha$. Since $\Pr^L$ is generated under colimits by presheaf categories, to prove the proposition it will suffice to show that for every small category $\Ical$ the map 
\[
\Hom_{\Pr^L}(\Pcal(\Ical), \ccal) \rightarrow \lim \Hom_{\Pr^L}(\Pcal(\Ical), \ccal_\alpha)
\]
is an isomorphism. The above is equivalent to the map
\[
\Hom_{\Pr^L}(\Pcal(\Ical) \otimes \Ab, \ccal) \rightarrow \lim \Hom_{\Pr^L}(\Pcal(\Ical) \otimes \Ab, \ccal_\alpha).
\]
The proof finishes by observing that $\Pcal(\Ical) \otimes \Ab = \Fun(\Ical^\op, \Ab)$ is a Grothendieck abelian category.
\end{proof}

\begin{example}\label{example lack of limits abelian}
Consider the inverse system of Grothendieck abelian categories
\[
\rightarrow \ldots \Mod_{\ZZ[x]/(x^3)}^\heartsuit \rightarrow \Mod_{\ZZ[x]/(x^2)}^\heartsuit  \rightarrow \Mod_{\ZZ[x]/(x)}^\heartsuit  
\]
where the transitions are the extension of scalars functors. The limit of the above diagram in $\Mod_{\Ab}(\Pr^L)$ is the category of $(x)$-adically complete $\ZZ[x]$-modules, which is known not to be abelian. It follows from proposition \ref{proposition inclusion preserves limits that exist abelian} that the above sequence does not admit a limit in $\Groth_1$.
\end{example}

 \begin{example}
 Let $\ccal$ be the full subcategory of $\Mod^\heartsuit_{\ZZ[x]}$ on the modules for which the action of $x$ is locally nilpotent. Then $\Fun^L(\ccal, \Ab)$ is the category of $(x)$-adically complete $\ZZ[x]$-modules, which is not abelian. It follows from proposition \ref{proposition inclusion preserves limits that exist abelian} that $\Groth$ does not admit a Hom object from $\ccal$ to $\Ab$.
 \end{example}
 
\begin{remark}\label{remark groth1 not colimits}
Since $\Groth_1$ does not have all small limits, it cannot be a localization of $\Mod_{\Ab}(\Pr^L)$. We note that $\Mod_{\Ab}(\Pr^L)$ is generated under small colimits by  objects that belong to $\Groth_1$ (namely, categories of the form $\Pcal(\Ical) \otimes \Ab$ for some small category $\Ical$). It follows from this that $\Groth_1$ does not admit all small colimits: otherwise, the left adjoint to its inclusion into $\Mod_{\Ab}(\Pr^L)$ would be defined on all objects. Since $\Groth_1$ admits small coproducts (by proposition \ref{prop limits and colimits abelian}) we see that $\Groth_1$ does not admit all pushouts, and it also does not admit all geometric realizations.
\end{remark}


\subsection{Prestable examples}

We now discuss the case of $\Groth$.

\begin{example}
As in example \ref{example pushout abelian}, the square of $\ZZ[x]$-linear Grothendieck prestable categories
\[
\begin{tikzcd}
\Mod^\cn_{\ZZ[x]}  & \Mod^\cn_{\ZZ[x, x^{-1}] } \arrow{l}{} \\
\Mod^\cn_{\ZZ[x, (x-1)^{-1}]} \arrow{u}{} &\arrow{l}{} \arrow{u}{} \Mod^\cn_{ \ZZ[x, x^{-1}, (x-1)^{-1}]}
\end{tikzcd}
\]
is a pushout in $\Groth_{\ZZ[x]}$ but not in $\Mod_{\Mod_{\ZZ[x]}^\cn}(\Pr^L)$. This example also shows that the assignment $R \mapsto \Mod_{\Mod_R^\cn}(\Pr^L)$ does not satisfy Zariski descent. An adaptation of the argument from remark \ref{remark example pushout abelian} shows that the above square is in fact also a pushout square in $\Groth$ which is not preserved by the inclusion into $\Pr^L$.
\end{example}

\begin{example}
Let $\ccal$ be a compactly generated presentable additive category which is not Grothendieck prestable (for instance $\ccal = \Ab$). Let $\ccal^\omega$ be the full subcategory of $\ccal$ on the compact objects, which we regard as an object in the category $\Cat_{\text{rex}, \text{id}}$ of small idempotent complete categories with finite colimits and functors which preserve finite colimits. The forgetful functor $\Cat_{\text{rex}, \text{id}} \rightarrow \Cat$ is monadic, and in particular we may write $\ccal^\omega$ as a colimit in $\Cat_{\text{rex}, \text{id}}$ of its Bar resolution $\Bar(\ccal^\omega)_\bullet$. It follows that $\ccal$ is the geometric realization of $\Ind(\Bar(\ccal^\omega)_\bullet) \otimes \Sp^\cn$ in $\Pr^L$. We note that the transitions in $\Ind(\Bar(\ccal^\omega)_\bullet) \otimes \Sp^\cn$ are compact functors. Furthermore, for each $n \geq 0$ we have an equivalence 
\[
\Ind(\Bar(\ccal^\omega)_{n}) \otimes \Sp^\cn =  \Pcal(\Bar(\ccal^\omega)_{n-1}) \otimes \Sp^\cn = \Fun((\Bar(\ccal^\omega)_{n-1}^\op, \Sp^\cn)
\]
where here in the case $n = 0$ we set $\Bar(\ccal^\omega)_{-1} = \ccal^\omega$. It follows that $\Ind(\Bar(\ccal^\omega)_\bullet) \otimes \Sp^\cn$ is a diagram in $\Groth^\cnorm$ whose colimit in $\Pr^L$ is not Grothendieck prestable. In particular, the inclusion $\Groth^\cnorm \rightarrow \Pr^L$ does not preserve small colimits.
\end{example}

\begin{proposition}\label{proposition inclusion preserves limits that exist prestable}
The inclusion $\Groth  \rightarrow \Pr^L$  preserves all limits and all Hom objects that exist in $\Groth$.
\end{proposition}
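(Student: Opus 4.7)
The plan is to imitate the proof of Proposition~\ref{proposition inclusion preserves limits that exist abelian} essentially verbatim, swapping the role of $\Ab$ for $\Sp^\cn$. I will sketch the argument for limits; the Hom-object case runs in parallel, replacing mapping spaces by internal Homs throughout.

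Let $\ccal$ denote the limit in $\Groth$ of a diagram $\{\ccal_\alpha\}$. Because $\Pr^L$ is generated under small colimits by presheaf $\infty$-categories, the claim that $\ccal$ is also the limit in $\Pr^L$ reduces to showing that for each small $\infty$-category $\Ical$ the canonical map
\[
\Hom_{\Pr^L}(\Pcal(\Ical), \ccal) \rightarrow \lim_\alpha \Hom_{\Pr^L}(\Pcal(\Ical), \ccal_\alpha)
\]
is an equivalence. Using that $\ccal$ and each $\ccal_\alpha$ are prestable, any colimit preserving functor out of $\Pcal(\Ical)$ landing in either of them factors uniquely through the prestabilization $\Pcal(\Ical) \otimes \Sp^\cn = \Fun(\Ical^\op, \Sp^\cn)$, so the displayed map is identified with
\[
\Hom_{\Pr^L}(\Fun(\Ical^\op, \Sp^\cn), \ccal) \rightarrow \lim_\alpha \Hom_{\Pr^L}(\Fun(\Ical^\op, \Sp^\cn), \ccal_\alpha).
\]

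The proof then finishes by observing that $\Fun(\Ical^\op, \Sp^\cn)$ is itself Grothendieck prestable, so both Hom-spaces coincide with the corresponding mapping spaces computed in $\Groth$, and the assumption that $\ccal = \lim \ccal_\alpha$ in $\Groth$ yields the equivalence.

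There is no genuine obstacle: the template is already in place from the abelian analogue. The one ingredient particular to the prestable setting is the recognition of $\Fun(\Ical^\op, \Sp^\cn)$ as a Grothendieck prestable category, which is standard (it is the category of presheaves of connective spectra on a small $\infty$-category). If anything requires care, it is simply making sure the factorization $\Pcal(\Ical) \to \Pcal(\Ical) \otimes \Sp^\cn$ applies uniformly to $\ccal$ and to every $\ccal_\alpha$, but this is automatic since all of them are objects of $\Groth$ and hence prestable.
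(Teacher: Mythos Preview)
Your argument is correct and is exactly the analogue the paper intends: the paper's own proof simply reads ``Analogous to the proof of proposition~\ref{proposition inclusion preserves limits that exist abelian},'' and you have spelled out precisely that analogy, replacing $\Ab$ by $\Sp^\cn$ and using that $\Fun(\Ical^\op, \Sp^\cn)$ is Grothendieck prestable.
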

\begin{proof}
Analogous to the proof of proposition \ref{proposition inclusion preserves limits that exist abelian}.
\end{proof}

\begin{example}\label{example non existence dual prestable}
Let $X$ be a non affine quasi-compact algebraic space with affine diagonal. Our goal in this example is to show that $\Groth$ does not admit a Hom object from $\QCoh(X)^\cn$ to $\Sp^\cn$. By virtue of proposition \ref{proposition inclusion preserves limits that exist prestable}, this is equivalent to the claim that the category $\Fun^L(\QCoh(X)^\cn, \Sp^\cn)$ of colimit preserving functors from $\QCoh(X)^\cn$ to $\Sp^\cn$ is not Grothendieck prestable. Let $G: \Mod_{\QCoh(X)^\cn}(\Groth) \rightarrow \Groth$ be the forgetful functor. Then $G$ embeds inside the forgetful functor $\Mod_{\QCoh(X)^\cn}(\Pr^L) \rightarrow \Pr^L$, which admits a right adjoint sending $\Sp^\cn$ to a $\QCoh(X)^\cn$-module with underlying category $\Fun^L(\QCoh(X)^\cn, \Sp^\cn)$. Our claim will then follow if we show that $G$ does not admit a right adjoint at $\Sp^\cn$. To do so it is enough to construct a diagram $\Dcal_\alpha$ in $ \Mod_{\QCoh(X)^\cn}(\Groth)$ with colimit $\Dcal$ with the property that $\Hom_{\Groth}(G(\dcal), \Sp^\cn) \neq \lim \Hom_{\Groth}(G(\Dcal_\alpha), \Sp^\cn)$.

Let $p: U \rightarrow X$ be a faithfully flat \'etale morphism with $U$ affine and consider the \v{C}ech nerve $U_\bullet$ of $p$. There is an induced cosimplicial object $\ccal^\bullet$ of $\twoQCoh^{\pst}(X)$ such that for every $n \geq 0$ the sheaf $\ccal^n$ is the pushforward along the map $U_n \rightarrow X$ of the structure sheaf $\QCoh^\cn_{U_n}$. The fact that the diagonal of $X$ is affine implies that all maps in $U_\bullet$ are affine, and hence we may construct a simplicial object $\ccal_\bullet$ in  $\twoQCoh^{\pst}(X)$ by passing to right adjoints the maps in $\ccal^\bullet$. Note that $\ccal^\bullet$ comes equipped with a coaugmentation from $\QCoh_X^\cn$, which after passing to right adjoints induces an augmentation $\ccal_\bullet \rightarrow \QCoh_X^\cn$. Since the base change of $U_\bullet$ along $p$ is split with geometric realization $U$ we see that $p^*\ccal^\bullet$ is split with totalization $p^*\QCoh_X^\cn$, and after passing to right adjoints we deduce that $p^*\ccal_\bullet$ is split with geometric realization $p^*\QCoh_X^\cn$. Since $\twoQCoh^{\pst}$ satisfies \'etale descent (\cite{SAG} theorem D.4.1.2) we conclude that $\QCoh_X^\cn$ is the geometric realization of $\ccal_\bullet$.

Applying \cite{SAG} theorem 10.2.0.2 we deduce that $\QCoh(X)^\cn$ is the geometric realization in $\Mod_{\QCoh(X)^\cn}(\Groth)$ of $\Gamma^\enh(X, \ccal_\bullet)$ (where here $\Gamma^\enh(X, -)$ is defined in a way similar to the functor $\Gamma^\enh_\comp(X, -)$ from notation \ref{notation gamma enh}). Unwinding the definitions, we have that $G(\Gamma^\enh(X, \ccal_\bullet))$  is the simplicial Grothendieck prestable category $\QCoh(U_\bullet)^\cn_*$ obtained from $U_\bullet$ by applying the functor $\QCoh^\cn$ in its pushforward functoriality. The functors of global sections $\QCoh(U_\bullet)^\cn \rightarrow \Sp^\cn$ assemble together into an augmentation $\QCoh(U_\bullet)^\cn_* \rightarrow \Sp^\cn$. To prove our claim it suffices to show that this does not extend to a map  $\QCoh(X)^\cn \rightarrow \Sp^\cn$.   Stabilizing, we may reduce to showing that the augmentation $\QCoh(U_\bullet)_* \rightarrow \Sp$ induced from the global section functors $\QCoh(U_n) \rightarrow \Sp$ does not extend to a right t-exact colimit preserving functor $\QCoh(X) \rightarrow \Sp$. Since $X$ is not affine, the functor of global sections $\QCoh(X) \rightarrow \Sp$ is not right t-exact (\cite{SAG} proposition 9.6.5.1). It is now enough to prove that $\QCoh(X)$ is the geometric realization in $\Pr^L$ of $\QCoh(U_\bullet)_*$. 

Consider the category $\twoQCoh^{\St}(X)$ of quasicoherent sheaves of presentable stable categories on $X$ (which we may identify with the full subcategory of $\twoQCoh^{\pst}(X)$ on those sheaves whose value on each affine scheme over $X$ is stable). As before, from $U_\bullet$ we may construct an augmented simplicial object $\Ccal'_\bullet \rightarrow \QCoh_X$ where for each $n \geq 0$ the sheaf $\Ccal'_n$ is the pushforward along the map $U_n \rightarrow X$ of the stable structure sheaf $\QCoh_{U_n}$. This is a colimit diagram, and therefore we have that $\QCoh(X)$ is the geometric realization in $\Mod_{\QCoh(X)}(\Pr^L)$ of  $\Gamma^\enh(X, \ccal'_\bullet)$. Our claim now follows from the fact that the image of this augmented simplicial diagram under the (colimit preserving) forgetful functor $\Mod_{\QCoh(X)}(\Pr^L) \rightarrow \Pr^L$ recovers the augmented simplicial object $\QCoh(U_\bullet)_* \rightarrow \QCoh(X)$.
\end{example}

\begin{example}\label{example non existence limits prestable}
Let $\ccal$ be a Grothendieck prestable category such that $\Fun^L(\ccal, \Sp^\cn)$ is not Grothendieck prestable (for instance, we may take $\ccal = \QCoh(X)^\cn$ for any non affine quasi-compact algebraic space with affine diagonal, see example \ref{example non existence dual prestable}). Let $\kappa$ be a regular cardinal such that $\ccal$ is $\kappa$-compactly generated and let $\ccal^\kappa$ be the full subcategory of $\ccal$ on the $\kappa$-compact objects, which we regard as an object in the category $\Cat_{\kappa}$ of small categories with $\kappa$-small colimits and $\kappa$-small colimit preserving functors. The forgetful functor $\Cat_{\kappa} \rightarrow \Cat$ is monadic, and in particular we may write $\ccal^\kappa$ as the colimit in $\Cat_\kappa$ of its Bar resolution $\Bar(\ccal^\kappa)_\bullet$. It follows that $\ccal$ is the geometric realization of $\Ind_{\kappa}(\Bar(\ccal^\kappa)_\bullet)$ in $\Pr^L$. We now have
\[
\Fun^L(\ccal, \Sp^\cn) = \lim \Fun^L(\Ind_{\kappa}(\Bar(\ccal^\kappa)_\bullet), \Sp^\cn).
\]
We note that $\Ind_{\kappa}(\Bar(\ccal^\kappa)_n)$ is a presheaf category for all $n$, and therefore the right hand side is a limit of Grothendieck prestable categories along colimit preserving functors.  It now follows from  proposition \ref{proposition inclusion preserves limits that exist prestable} that the cosimplicial diagram $\Fun^L(\Ind_{\kappa}(\Bar(\ccal^\kappa)_\bullet), \Sp^\cn)$ does not admit a totalization in $\Groth$.
\end{example}
 
\begin{remark}
As in remark \ref{remark groth1 not colimits}, the fact that $\Groth$ does not admit small limits implies that the inclusion $\Groth \rightarrow \Mod_{\Sp^\cn}(\Pr^L)$ does not admit a left adjoint, and consequently $\Groth$ does not admit small colimits. Combined with proposition \ref{prop colimits y limits} (which guarantees the existence of small coproducts) we deduce that $\Groth$ does not admit geometric realizations nor pushouts.
\end{remark}


\ifx\inmain\undefined
\bibliographystyle{myamsalpha}
\bibliography{References}
\fi

\bibliographystyle{myamsalpha2}
\bibliography{References}
 
\end{document}